\documentclass[a4paper,10pt]{article}

\usepackage{authblk}
\usepackage{amsmath}
\usepackage{amssymb}
\usepackage{amsthm}
\usepackage[mathscr]{eucal}
\usepackage{bm}
\usepackage{bbm}
\usepackage{enumerate}
\usepackage{a4wide}
\usepackage{tikz}
\usepackage{graphicx}
\usepackage{subcaption}
\usepackage{epstopdf}
\usetikzlibrary{decorations.pathreplacing}
\usepackage{algorithm}
\usepackage{algorithmic}
\usepackage{hyperref}
\usepackage[nocompress]{cite}

\newcommand{\pr}{\mathbb{P}}
\newcommand{\Prob}[1]{\pr\left(#1\right)}

\newcommand{\CProb}[2]{\pr\left(\left.#1\right|#2\right)}

\newcommand{\e}{\mathbb{E}}
\newcommand{\Exp}[1]{\e\left[#1\right]}
\newcommand{\Expn}[1]{\e_n\left[#1\right]}
\newcommand{\CExp}[2]{\e\left[\left.#1\right|#2\right]}


\newcommand{\abs}[1]{\left\vert#1\right\vert}

\newcommand{\To}{\rightarrow}


\newcommand{\dlim}{\ensuremath{\stackrel{d}{\rightarrow}}}

\newcommand{\ind}[1]{\mathbbm{1}_{\left\{#1\right\}}}

\newcommand{\ceil}[1]{\left\lceil{#1}\right\rceil}
\newcommand{\floor}[1]{\left\lfloor{#1}\right\rfloor}

\newcommand{\bigO}[1]{O\left(#1\right)}

\newcommand\numberthis{\addtocounter{equation}{1}\tag{\theequation}}
\allowdisplaybreaks

\newtheorem{theorem}{Theorem}[section]
\newtheorem{definition}{Definition}[section]
\newtheorem{lemma}[theorem]{Lemma}

\newtheorem{corollary}[theorem]{Corollary}
\newtheorem{assumption}[definition]{Assumption}

\newtheorem{remark}[theorem]{Remark}
\title{Average nearest neighbor degrees in scale-free networks}
\author[1]{Dong Yao}
\author[2]{Pim van der Hoorn}
\author[3,4]{Nelly Litvak}
\affil[1]{Duke University, North Carolina, United States}
\affil[2]{Northeastern University, Boston, United States}
\affil[3]{University of Twente, Enschede, the Netherlands}
\affil[4]{Eindhoven University of Technology, Eindhoven, the Netherlands}

\begin{document}

\maketitle

\begin{abstract}
The average nearest neighbor degree (ANND) of a node of degree $k$ is widely used to measure dependencies between degrees of neighbor nodes in a network. We formally analyze ANND in undirected random graphs when the graph size tends to infinity. The limiting behavior of ANND depends on the variance of the degree distribution. When the variance is finite,  the ANND has a deterministic limit. When the variance is infinite, the ANND scales with the size of the graph, and we prove a corresponding central limit theorem in the configuration model (CM, a network with random connections). As ANND proved uninformative in the infinite variance scenario, we propose an alternative measure, the average nearest neighbor rank (ANNR). We prove that ANNR converges to a deterministic function whenever the degree distribution has finite mean. We then consider the erased configuration model (ECM), where self-loops and multiple edges are removed, and investigate the well-known `structural negative correlations',  or `finite-size effects', that arise in simple graphs, such as ECM, because large nodes can only have a limited number of large neighbors. Interestingly, we prove that for any fixed $k$, ANNR in ECM converges to the same limit as in CM. However, numerical experiments show that finite-size effects occur when $k$ scales with $n$.
\end{abstract}

\section{Introduction}
The goal of this paper is to analytically derive the limiting properties of the average nearest neighbor degree (ANND) in a general class of random graphs. The motivation for this analysis is that the ANND is one of the commonly accepted measures for dependencies between degrees of neighbor nodes. Such dependencies are called degree-degree correlations or network assortativity. A network is said to be assortative, if the correlation between degrees of neighbor nodes is positive and disassortative when it is negative. In assortative networks, nodes of high degree have a preference to connect to nodes of high degree. When the network is disassortative, nodes of high degree have a connection preference for nodes of low degree~\cite{Newman2003}. If there is no connection preference, the network is said to have neutral mixing.

Currently, degree-degree correlations are part of the standard set of properties used to
characterize the structure of networks. See \cite{Noldus2015} for a survey of the work on network
assortativity. The effect of degree-degree correlations on disease spreading in networks has been extensively addressed in the literature, cf. \cite{Barthelemy2005,Boguna2003a,Boguna2003,Boguna2002}.
For instance, it was shown that disassortative networks are easier to immunize and a disease takes longer to spread in assortative networks \cite{DAgostino2012}. In the field of neuroscience, it was shown that assortative brain networks are better suited for signal processing~\cite{Schmeltzer2015}, while assortative neural networks are more robust to random noise \cite{Franciscis2011}.
Under attacks, when edges or vertices are removed, assortative networks appear to be more resilient than disassortive networks \cite{Newman2003,Vazquez2003}.
On the other hand, when different networks interact, assortativity actually decreases the robustness
of the whole system \cite{Zhou2012}.

A well established measure for degree-degree correlation computes Pearson's correlation coefficient on the joint data of the degrees at both sides of an edge \cite{Newman2002,Newman2003}. However, this measure has been shown to depend on the size of the network and it converges to zero when the degree distribution has infinite variance \cite{Litvak2013,Hofstad2014}. To remedy this, new measures have been introduced \cite{Hofstad2014,Hoorn2014a} that are based on rank-correlations such as Spearman's rho and Kendall's tau. These measures are shown to converge to a proper limit when the size of the network tends to infinity \cite{Hofstad2014,Hoorn2014}, see also \cite{Hoorn2016b} for an overview.

Both Pearson's correlation coefficient and rank-correlation measures represent the full degree-degree correlation structure of a network as \emph{one} number and hence, are unable to capture local correlation structures. Instead, another common approach in the literature is to compute for a node of degree $k$, the average degree of it's neighbors and then take the average of these values over all nodes of degree $k$. This gives us a function of $k$ called the Average Nearest Neighbor Degree (ANND) \cite{Pastor-Satorras2001, Catanzaro2005}. For assortative networks the ANND is an increasing function of $k$, while it is decreasing in $k$ for disassortative networks and constant when the network has neutral mixing.

Networks with neutral mixing are an essential tool in the analysis of degree-degree correlations. An important example of such networks are given by the configuration model \cite{Bollobas1980,chen2013,Molloy1995,Wormald1980}, which generates networks with a given degree sequence and lets the nodes connect randomly to each other. It is well known that the ANND in the configuration model is a constant, given by the  empirical second moment of the degrees, divided by the empirical first moment. However, until now,  no rigorous proof of this result was known, and the asymptotic behavior of ANND in the infinite graph size limit has not been analyzed. Given the wide application of ANND, this is an essential gap especially in the most realistic scenario when degrees have infinite variance. In this case, the empirical second moment grows with the size of the network and hence the ANND diverges as the network size tends to infinity. This means that, similarly to the Pearson's correlation coefficient, the ANND is not a consistent measure for degree-degree correlations in networks.

In this paper we address the convergence of the ANND in random graphs with given joint degree distribution of neighbor nodes, and in the configuration model.
In Section~\ref{sec:convergence_annd_general} we prove that, when the degree distribution has a finite variance, the ANND converges point-wise in probability in the general case. Moreover, for the configuration model the rate of convergence is uniform.
In Section~\ref{sec:convergence_annd_cm} we turn to the important case of infinite variance and focus on the configuration model. We prove a central limit theorem for the ANND where the limiting random
variable has a stable distribution with infinite variance. In Section~\ref{sec:annr} we apply the strategy of using ranks instead of degrees and propose the Average Nearest Neighbor Rank (ANNR) correlation measure. We prove that ANNR converges point-wise for any joint degree distribution. In the configuration model the limit is a constant, which is determined by the size-biased degree distribution. Moreover, the limit  is preserved, for any fixed $k$, in the erased configuration model, which is a simple graph obtained by removing self-loops and multiple edges.

Numerical experiments in Section~\ref{sec:experiments} illustrate our results for ANND and ANNR. One striking observation is that the empirical average of ANND and ANNR over several networks rapidly decrease with $k$, when $k$ is large. We explain this by the fact that for sufficiently large $k$, with positive probability, there is no node of such degree. When correcting by this probability, ANND and ANNR in the configuration model no longer decrease. Moreover, ANNR shows remarkably small fluctuations around its theoretical value even in small networks.

In the erased configuration model, for very large degrees $k$, which scale with the size of the graph, we observe a decline of ANNR. This phenomenon, called `structural correlations', is well known in the literature~\cite{Catanzaro2005,Barabasi2016}. It is explained by the fact that the graph is simple, therefore, nodes of large degrees are forced to connect to nodes of smaller degrees. Structural correlations for the rank-based correlation measure Spearman's rho have been observed before~\cite{Hoorn2015PhysRev}. Here we see that this phenomenon holds for the ANNR as well.

We start with introducing definitions and notations in Section~\ref{sec:notations_definitions}. Then in Section~\ref{sec:sampled}, before analyzing ANND, we establish an upper bound $K_n$ depending on $n$, such that all values $1,2,\ldots,K_n$ are present, with high probability, in an i.i.d. degree sequence of sequence length $n$, sampled from a regularly-varying distribution. To the best of our knowledge, this result is new and can be of independent interest.

\section{Notations and definitions}\label{sec:notations_definitions}

\subsection{Graphs and degree distributions}
Let $G_n=([n],E_n)$ denote an undirected graph of size $n$, with the nodes labeled from $1$ to $n$, and $E_n$ the set of edges. We write $D_i$ for the degree of node $i$ and call ${\bf D}_n = (D_1, \dots, D_n)$ the degree sequence of $G_n$. We use the term `degree sequence' to refer to any sequence $(D_1,\ldots,D_n)$ for which the sum $L_n=\sum_{i = 1}^n D_i$ is even.

For computations, it is convenient to replace each edge between nodes $i$ and $j$ by two directed edges $i \to j$ and $j \to i$. We denote by $G_{ij}$  the number of edges $i \to j$, and note that $G_{ij}$ can be larger than $1$ because we allow for multi-graphs. By construction we have $G_{ij} = G_{ji}$. Furthermore, $G_{ii}$ is \emph{twice} the number of self-loops of node $i$. With these notation the degree of a node is given as $D_i = \sum_{j = 1}^n G_{ij}$, which is the number of half edges attached to the node. We will use the notation $\sum_{i \to j}$ to denote the summation over all edges $i \to j$ in the graph (note that each pair of connected nodes in the undirected graph is counted twice in such summation).

In this paper, we use i.i.d. degree sequences, formally defined as follows~\cite{Hofstad2016}. Let $\mathscr{D}$ be a positive integer-valued random variable, with cumulative distribution function $F$ and probability density function $f$.
Let $D_1,D_2,\ldots,D_{n-1},d_n,$ be i.i.d. samples from $F$, and define:
\begin{equation*}
  D_n=d_n+\ind{(\sum_{i=1}^{n-1} D_i+d_n) \hspace{0.3em} is \hspace{0.3em} odd}\,.
\end{equation*}
We will write ${\bf D}_n = \texttt{IID}(\mathscr{D})$ for such sequence. Observe that the correction term in $D_n$ is uniformly bounded in $n$, therefore, it does not contribute to the asymptotic behavior of random graphs with degree sequence $\texttt{IID}(\mathscr{D})$. Hence, without loss of generality, we will consider the degrees $D_1,D_2,\ldots, D_n$ as i.i.d. samples from $\mathscr{D}$.

Throughout the paper we will denote the empirical degree density function by
\[f_n(k)=\frac{1}{n} \sum_{i=1}^n \ind{D_i = k},\quad k = 0, 1, \dots,\]
and the size-biased empirical degree density function will be denoted by
\[f_n^\ast(k)= \frac{1}{L_n} \sum_{i \to j} \ind{D_i = k} = \frac{1}{L_n}\sum_{i = 1}^n k \ind{D_i = k} = \frac{kf_n(k)}{L_n},\quad k = 1, 2, \dots.\]

In addition, we will denote by $F_n$ and $F_n^\ast$, the cumulative distribution functions corresponding to $f_n$ and $f_n^\ast$, respectively.

\subsection{Distances between probability measures}

In order to describe convergence of empirical distributions to their limits, we will use two different distances between probability measures: the total variation distance and the Kantorovich-Rubinstein distance.

Let $f$ and $g$ be two probability density functions on the non-negative integers and let $F$ and $G$ denote their respective cumulative distribution functions.

The total variation distance $d_{tv}$ is defined as
\begin{equation*}
d_{tv}(f,g) = \frac{1}{2}\sum_{k=0}^\infty \abs{f(k)-g(k)}\,.
\end{equation*}
Using the definition \[F(k)=\sum_{l \leq k} f(l),\]
we immediately see that
\begin{equation}\label{eq:sup_bound_total_variantion}
\sup_{k \ge 0}\abs{F(k)-G(k))} \leq 2 d_{tv}(f, g).
\end{equation}

The Kantorovich-Rubinstein distance $d_1$ between $f$ and $g$,  is defined as follows:
\begin{equation}\label{eq:def_kantorovich_distance}
d_1(f,g) = \sum_{k\ge 0} \abs{F(k)-G(k)}\,.
\end{equation}
Let $X$ and $Y$ be non-negative integer-valued random variables with distributions $F$ and $G$ respectively, and assume that $F$ and $G$ have a finite mean.  Then it follows from
\begin{equation}
\mathbb{E}(X)=\sum_{k \ge 0} \mathbb{P}(X > k)\,
\end{equation}
and the triangle inequality that
\begin{equation*}
\abs{\mathbb{E}(X)-\mathbb{E}(Y)} \leq d_1(F,G)\,.
\end{equation*}
In particular, convergence in $d_1$ implies convergence of the means.

We will usually use the Kantorovich-Rubinstein distance when the means are finite, and the total variation distance when the means are infinite.

\subsection{Regularly-varying distributions}
An important feature shared by many real-world networks is that their degree distribution is scale-free. This is often visualized by showing that $\Prob{D > k}$ behaves as an inverse power of $k$. Mathematically we can model this using regularly-varying distributions. In this paper, we assume that $\mathscr{D}$ has a regularly-varying density, i.e.
\begin{equation}\label{eq:def_regularly_varying_pmf}
	\Prob{\mathscr{D} = k} = l(k) k^{-\gamma-1}, \quad k = 1,2,\dots
\end{equation}
The parameter $\gamma>1$ is called the \emph{exponent} of the distribution and $l(x)$ is a slowly varying function, which means that for every $\lambda > 0$,
\[
	\lim_{x \to \infty} \frac{l(\lambda x)}{l(x)}=1.
\]
We will furthermore assume that the slowly-varying function $l(x)$ is eventually monotone.

Observe that if $\mathscr{D}$ satisfies \eqref{eq:def_regularly_varying} then $\Exp{\mathscr{D}^p} < \infty$ for all $p < \gamma$. We refer to \cite{Bingham1989} for a thorough treatment of regular variation. We do want to point out that due to Karamata's theorem, it follows from \eqref{eq:def_regularly_varying_pmf} that
\begin{equation}\label{eq:def_regularly_varying}
	\Prob{\mathscr{D} > t} = \tilde{l}(t) t^{-\gamma} \quad \text{for all } t > 0,
\end{equation}
where $\tilde{l}(x) \sim l(x)/\gamma$ as $x \to \infty$. See Lemma~\ref{lem:regularly_varying_cdf} from more details. Due to this asymptotic equivalence we shall slightly abuse notation and use $l(x)$ to denote both the slowly-varying function associated with $\Prob{\mathscr{D} = k}$ and $\Prob{\mathscr{D} > t}$. When $\mathscr{D}$ satisfies \eqref{eq:def_regularly_varying_pmf} we say that $\mathscr{D}$ is regularly varying with exponent $\gamma$.

\subsection{Stable distributions}

Stable distributions are important for us, since they come up as the limit distribution for central limit theorems involving regularly-varying distributions. A random variable $X$ is said to have a stable distribution if for every $n \ge 2$ there exists constants $a_n$ and $b_n$ such that for any sequence $X_1, \dots X_n$ of
independent copies of $X$,
\[
	X_1 + X_2 + \dots + X_n \stackrel{d}{=} a_n X + b_n.
\]
The characteristic function of stable distribution can be classified using four parameters $\alpha, \beta, \sigma$ and $\mu$, see \cite[Definition 1.1.6]{Schmeltzer2015}, and the corresponding random variable is denote by $S_\alpha(\sigma, \beta, \mu)$. The parameter $\alpha$ is called the stability index and is the most important parameter for our purposes, since it relates to the exponent $\gamma$ of the regularly-varying distribution. The Stable Law CLT (\cite[Theorem 4.5.1]{whitt2006}), states that for a sequence $(X_i)_{i \ge 1}$ of i.i.d. copies of a regularly-varying random variable with exponent $\gamma > 1$ there exist a slowly-varying function $l_0(n)$ such that
\begin{equation}\label{eq:def_stable_law}
	\frac{n^{-\frac{1}{\gamma}}}{l_0(n)} \sum_{i = 1}^n X_i \dlim S_{\gamma}(1, \beta, 0)
\end{equation}
When the $X_i$ are non-negative, as is the case for degrees, $\beta = 1$. Hence we will omit the dependence on the other three parameters and write $S_\gamma$ for a stable distribution with stability index $\gamma$. 

\subsection{Average nearest neighbor degree}
The Average Nearest Neighbor Degree (ANND) of nodes of degree $k$ in a graph $G_n$, of size $n$, is formally defined on simple graphs as (see also \cite{Catanzaro2005})
\begin{equation}\label{eq:def_Phi_n_alternative}
\Phi_n(k)=\ind{f_n(k) > 0}\sum_{\ell > 0} \ell P(\ell|k),
\end{equation}
where $P(\ell|k)$ is defined as the probability that a node of degree $k$ is connected to a node of degree $\ell$. The indicator $\ind{f_n(k) > 0}$ stands for the event that at least one node of degree $k$ exists in the network, otherwise, $\Phi_n(k)$ is set to zero. Let us define
\begin{equation}\label{eq:def_joint_degree_density}
	h_n(k,\ell)= \frac{1}{L_n} \sum_{i \to j} \ind{D_i = k, \, D_j = \ell},
\end{equation}
as the empirical joint distribution of the degrees on both sides of a randomly sampled edge in the graph $G_n$. We shall refer to $h_n(k,\ell)$ as the \emph{joint degree distribution}. 
Next, we note that $P(\ell|k)$ is equivalent to the probability that a randomly selected edge $i \to j$, conditioned on $D_i = k$, satisfies $D_j = \ell$, i.e. $P(\ell|k) = h_n(k,\ell)/f^\ast_n(k)$. Using this, we can extend \eqref{eq:def_Phi_n_alternative} to the setting of arbitrary (multi)graphs as
\begin{equation}\label{eq:def_Phi_n}
  \Phi_n(k) = \ind{f_n(k) > 0}\frac{\sum_{\ell > 0} h_n(k,\ell)\ell}{f_n^\ast(k)}.
\end{equation}

\subsection{Configuration model}
The configuration model (CM) \cite{Bollobas1980,chen2013,Molloy1995,Wormald1980} is an important model for generating graphs $G_n$ of size $n$, with a specific degree sequence. 
Since it generates graphs with neutral mixing (see e.g.~\cite{Hofstad2014,Hoorn2016b}), it is also a crucial model for the analysis of degree-degree correlations.

Given degree sequence ${\bf D}_n$, we assign $D_i$ stubs (half-edges) to each node $i$. Then we randomly pair the $L_n$ stubs to obtain a graph (possibly a multi-graph) $G_n$ with the given degree sequence. This procedure can be extended to generate graphs with a specific degree distribution. Let $\mathscr{D}$ be a non-negative, integer-valued random variable with probability density $f$. When ${\bf D}_n =\texttt{IID}(\mathscr{D})$, the configuration model generates random graphs, in which the empirical degree distribution $f_n$ converges to $f$ as $n\to\infty$.

CM can be adjusted to generate simple graphs. One approach is to simply repeat the wiring of the graph until the resulting graph is simple. This is the repeated configuration model (RCM). The RCM can be applied successfully only if the probability to obtain a simple graph converges to a nonzero value as $n\to\infty$. It is well-known (see \cite[Chapter 7]{Hofstad2016}) that this is indeed the case if and only if $\mathscr{D}$ has finite second moment. Another way to obtain a simple graph is to simply remove all self-loops and replace all multiple edges between $i$ and $j$ by a single edge. This model, called the erased configuration model (ECM), generates a simple graph with correct asymptotic degree distribution whenever ${\bf D}_n=\texttt{IID}(\mathscr{D})$ and $\mathscr{D}$ has a finite mean~(see \cite[Theorem~7.10]{Hofstad2016}).

\section{Sampled degrees}
\label{sec:sampled}
Recall that in \eqref{eq:def_Phi_n} we set $\Phi_n(k) = 0$ if the degree
sequence contains no nodes of degree $k$. Therefore, before we start our analysis of the behavior of
$\Phi_n(k)$, we need to understand which $k$'s are present in the degree sequence ${\bf D}_n =\texttt{IID}(\mathscr{D})$. The
following result for regularly-varying distributions is, to the best of our knowledge, not known in the literature and can be of independent interest.

\begin{theorem}\label{thm:degree_sequence}
Let ${\bf X}_n = \{X_1, \dots, X_n\}$, be independent copies of an integer-valued random variable $X$, with regularly-varying probability mass function $f$ with exponent $\gamma > 1$ and suppose that $f(k)>0$ for all $k>0$. Then for any $0 < a < \frac{1}{\gamma+1}$
\begin{equation*}
\lim_{n\To \infty} \Prob{\left\{1,2,\ldots,\ceil{n^a}\right\} \subseteq {\bf X}_n} = 1\,.
\end{equation*}
On the other hand, if $a>\frac{1}{\gamma+1}$, then
\begin{equation*}
	\lim_{n\To \infty} \Prob{\ceil{n^{a}} \in {\bf X}_n}=0\,.
\end{equation*}
\end{theorem}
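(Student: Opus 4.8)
The plan is to base both claims on the exact identity
\[
\Prob{k \in \mathbf{X}_n} = 1 - (1 - f(k))^n,
\]
which holds because the samples are i.i.d.\ and $k$ is absent from $\mathbf{X}_n$ precisely when none of the $n$ draws equals $k$. Write $K_n = \ceil{n^a}$. For the second statement I would apply this with the single value $k = K_n$ and use $1 - (1-x)^n \le nx$ for $x \in [0,1]$ to obtain
\[
\Prob{K_n \in \mathbf{X}_n} \le n f(K_n) = n\, l(K_n) K_n^{-\gamma - 1}.
\]
Since $K_n \sim n^a$, this is of order $l(n^a)\, n^{1 - a(\gamma+1)}$, and as $a > \frac{1}{\gamma+1}$ the exponent $1 - a(\gamma+1)$ is strictly negative. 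Because a slowly varying function is dominated by every positive power of its argument (Potter's bounds, cf.\ \cite{Bingham1989}), this product tends to $0$, proving the claim.

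For the first statement I would estimate the complementary event by a union bound and then linearize via $1 - x \le e^{-x}$:
\[
\Prob{\set{1,\dots,K_n} \not\subseteq \mathbf{X}_n} \le \sum_{k=1}^{K_n} (1 - f(k))^n \le \sum_{k=1}^{K_n} e^{-n f(k)}.
\]
The assumption $f(k) > 0$ for all $k > 0$ ensures each term is well defined and that the event is not excluded for trivial reasons. The sum is governed by its largest term, i.e.\ by the smallest value of $f$ on $\set{1,\dots,K_n}$, so the crux is to show that this minimum is comparable to $f(K_n)$ and that $n f(K_n)$ outgrows $\log K_n$.

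To control the minimum I would invoke Potter's bounds for the regularly varying $f$ (of index $-\gamma - 1$): fixing a small $\delta > 0$, there is $k_0$ such that for all $k_0 \le k \le K_n$ one has $f(k) \ge (1-\delta) f(K_n)$, since the ratio $f(k)/f(K_n)$ is bounded below by $(1-\delta)(k/K_n)^{-\gamma-1+\delta} \ge 1 - \delta$ using $k/K_n \le 1$. On the finite range $1 \le k < k_0$ we have $f(k) \ge c$ with $c = \min_{1 \le k < k_0} f(k) > 0$. Splitting the sum accordingly gives
\[
\sum_{k=1}^{K_n} e^{-n f(k)} \le k_0\, e^{-nc} + K_n\, e^{-n(1-\delta) f(K_n)}.
\]
The first term vanishes exponentially. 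For the second, write it as $\exp\!\left(\log K_n - n(1-\delta) f(K_n)\right)$; here $\log K_n \sim a \log n$, whereas $n f(K_n) \sim l(n^a)\, n^{1 - a(\gamma+1)}$ with $1 - a(\gamma+1) > 0$ since $a < \frac{1}{\gamma+1}$. The same domination of slowly varying functions by powers shows $n f(K_n) \to \infty$ faster than any logarithm, so the exponent tends to $-\infty$ and the term vanishes.

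I expect the main obstacle to be the careful handling of the slowly varying factor $l$: both the reduction $f(k) \gtrsim f(K_n)$ across the whole range $[k_0, K_n]$ and the claim that $l(n^a)\, n^{1-a(\gamma+1)} \to \infty$ (that $l$ does not flatten the strictly positive power) rest on regular-variation estimates rather than on naive monotonicity, which is where I would lean on Karamata's theorem and Potter's bounds. The eventual monotonicity assumption on $l$ offers an alternative route to the sharper statement $\min_{k \le K_n} f(k) = f(K_n)$ for large $n$, but the Potter-bound argument above is more robust and suffices.
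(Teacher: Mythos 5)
Your proof is correct and follows essentially the same route as the paper: both arguments combine the exact identity $\Prob{k \notin \mathbf{X}_n} = (1-f(k))^n$ with a union bound for the first claim, split off a finite initial segment where $f$ is bounded below by a positive constant, and invoke Potter's bounds to control the regularly-varying tail, concluding that $\ceil{n^a}$ times the worst-case term vanishes when $a(\gamma+1)<1$ and that $n f(\ceil{n^a}) \to 0$ when $a(\gamma+1)>1$. The only (immaterial) difference is that the paper absorbs the slowly-varying factor into a perturbed power-law envelope $Ck^{-b}$ once and for all, whereas you keep $l$ explicit and use the ratio form of Potter's bounds together with $1-x \le e^{-x}$ and $1-(1-x)^n \le nx$.
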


\begin{proof} 
Throughout the proof $l(k)$ denotes the slowly-varying function associated with the probability mass function of $X$.  
We start with the first statement.

Since $a < 1/(\gamma+1)$, it follows from Potter's bounds for slowly varying functions, that there exist $\gamma+1<b<\frac{1}{a}$, $C>0$ and $K \ge 1$, such that $f(k) \ge C k^{-b}$ for all $k \ge K$. Then, for sufficiently large $n$,
\begin{align*}
  &\hspace{-30pt}\Prob{\{ 1,2,\ldots, \ceil{n^a}\}\subseteq \{X_1,X_2,\ldots,X_n\}} \\
  &\geq 1-\sum_{k=1}^{\ceil{n^a}}\Prob{k\notin\{X_1,X_2,\ldots,X_n\}}  \\
  &\ge 1 - \sum_{k = 1}^{K-1} (1 - \Prob{X = k})^n - \sum_{k = K}^{\ceil{n^a}}(1-C k^{-b})^n\\
  &\geq 1 - (K-1)(1 - \Prob{X = K - 1})^n - (n^a + 1) (1-C n^{-ab})^n.
\end{align*}
Because $\Prob{X = K - 1} < 1$ we have $(K-1)(1 - \Prob{X = K - 1})^n \to 0$ as $n \to \infty$, while $ab<1$ implies that $(n^a + 1)(1-C n^{-ab})^n\to 0$ and hence $\Prob{\{ 1,2,\ldots, \ceil{n^a}\}\subseteq \{X_1,X_2,\ldots,X_n\}} \to 1$.

We follow a similar approach for the second statement. Since $\frac{1}{a} > \gamma + 1$, it follows from Potter's bounds that there exist $\frac{1}{a}<b^\prime<\gamma+1$, $C^\prime > 0$ and $K \ge 1$, such that $f(k) \leq$ $C^\prime k^{-b^\prime}$ for all $k \ge K$. Then, for sufficiently large $n$ we obtain
\begin{align*}
 	\Prob{\ceil{n^a} \in \{X_1,\ldots,X_n\}}&=1-(1-f(\ceil{n^a}))^n\\
 	&\leq 1-(1-Cn^{-ab^\prime})^n.
\end{align*}
Since $ab^\prime>1$, we have $\lim_{n\To \infty}(1-Cn^{-ab^\prime})^n=1$, which gives the result.
\end{proof}

\begin{remark}
Theorem \ref{thm:degree_sequence} can be adjusted in a straight-forward way to the case where the probability density function $f$ of $X$ has infinite support but is zero on some finite subset of the positive integers.
\end{remark}

It is interesting to notice that the degrees higher than $n^{\frac{1}{\gamma+1}}$ will appear in ${\bf D}_n$, in particular the maximal degree scales as $n^{\frac{1}{\gamma}}$. However, only up to $n^a$ with $a<\frac{1}{\gamma+1}$ one can guarantee that {\it all } degrees between $1$ and $n^a$ will participate in ${\bf D}_n$. In other words, for $\frac{1}{\gamma+1}<a<b<\frac{1}{\gamma}$ some values $k\in [n^a, n^b]$ will be missing in ${\bf D}_n$.

\section{Limiting behavior of $\Phi_n$ in graphs with general joint degree distributions and finite second degree}\label{sec:convergence_annd_general}

In this section we analyze the graphs with given limiting joint degree distribution $h(k,\ell)$ and finite second moment. Our main result (Theorem \ref{thm:convergence_annd}) proves the convergence of
the average nearest neighbor degree $\Phi_n$ to its limit
\begin{equation}\label{eq:annd_limit}
\Phi(k) = \ind{f^\ast(k) > 0}\frac{\sum_{\ell = 1}^{\infty} h(k,\ell)\ell}{f^\ast(k)},
\end{equation}
where
\[
	f^\ast(k) = \frac{k f(k)}{\Exp{\mathscr{D}}}.
\]
is the size-biased probability density function $f^\ast$ of a non-negative integer random variable $\mathscr{D}$. Note that for $k \ge 1$, $f(k) > 0$ if and only if $f^\ast(k) > 0$.

As commonly accepted in the random graph literature, we impose regularity assumptions on the degree sequences.  Assumption~\ref{asmp:regularity_degrees} uses the distance $d_1$ to simultaneously state the convergence of $f_n$, $f_n^*$ and their expectations to the corresponding limiting values. Note that convergence of the expectation of $f_n^*$ is equivalent to the convergence of the second moment of $f_n$. We denote by $A^c$ the event complementary to the event $A$.

\begin{assumption}[Regularity of the degrees]\label{asmp:regularity_degrees}
There exist a probability density $f$ on the non-negative integers with size biased version $f^\ast$ and
some $\alpha,\varepsilon > 0$, such that if we set
\[
	\Omega_n=\{\max\left\{d_1(f_n,f), \, d_1(f_n^*,f^*)\right\} \le n^{-\varepsilon}\},
\]
then, as $n \to \infty$,
\begin{equation*}
\Prob{\Omega_n^c} = O\left(n^{-\alpha}\right).
\end{equation*}
\end{assumption}

Assumption~\ref{asmp:regularity_degrees} looks slightly stronger than just convergence because it requires that the distances between the empirical and limiting distributions are not larger than a negative power of $n$ with high probability. However, this is not restrictive. In particular, Theorem~\ref{thm:convergence_degree_densities} below states that Assumption \ref{asmp:regularity_degrees} holds for ${\bf D}_n=\texttt{IID}(\mathscr{D})$ whenever $\mathscr{D}$ has a finite $(2+\eta)$-moment for some $\eta>0$. The proof, which
can be found in Section \ref{ssec:proof_convergence_degree_densities}, is a straightforward extension of the proof of \cite[Theorem 3.1]{Hoorn2016b}.

\begin{theorem}\label{thm:convergence_degree_densities}
Suppose that $\Exp{\mathscr{D}^{2+\eta}}<\infty$ for some $0 < \eta < 1$. Let ${\bf D}_n =
\emph{\texttt{IID}}(\mathscr{D})$, $0 < \varepsilon \le \frac{\eta}{2(\eta+2)}$ and $\Omega_n$ be defined as in Assumption
\ref{asmp:regularity_degrees}. Then, as $n \to \infty$,
\begin{equation*}
	\Prob{\Omega_n^c} = O\left(n^{-\varepsilon}\right)
\end{equation*}
so that, in particular, ${\bf D}_n$ satisfies Assumption \ref{asmp:regularity_degrees} with $\alpha = \varepsilon$.
\end{theorem}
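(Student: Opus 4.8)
The plan is to establish the two moment bounds on the relevant distances and then assemble them via a union bound, following the structure of \cite[Theorem 3.1]{Hoorn2016b} but carrying the slightly stronger moment assumption. The two quantities to control are $d_1(f_n,f)$ and $d_1(f_n^*,f^*)$, and I would bound the probability that each exceeds $n^{-\varepsilon}$ separately, since $\Omega_n^c \subseteq \{d_1(f_n,f) > n^{-\varepsilon}\} \cup \{d_1(f_n^*,f^*) > n^{-\varepsilon}\}$.

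For the first term, recall that $d_1(f_n,f) = \sum_{k \ge 0}\abs{F_n(k)-F(k)}$. The natural route is to bound $\Exp{d_1(f_n,f)}$ and then apply Markov's inequality. Writing $F_n(k)-F(k) = \frac{1}{n}\sum_{i=1}^n(\ind{D_i \le k} - F(k))$, each summand is a centered Bernoulli, so $\Exp{(F_n(k)-F(k))^2} = \frac{1}{n}F(k)(1-F(k)) \le \frac{1}{n}(1-F(k))$. By Cauchy--Schwarz, $\Exp{\abs{F_n(k)-F(k)}} \le n^{-1/2}(1-F(k))^{1/2}$, and summing over $k$ gives $\Exp{d_1(f_n,f)} \le n^{-1/2}\sum_{k\ge 0}(1-F(k))^{1/2}$. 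The tail $1-F(k) = \Prob{\mathscr{D} > k}$ decays like $k^{-\gamma}$ times a slowly varying factor when $\mathscr{D}^{2+\eta}$ has a finite moment (so $\gamma > 2$), which makes $\sum_k (1-F(k))^{1/2}$ converge; hence $\Exp{d_1(f_n,f)} = O(n^{-1/2})$ and Markov yields $\Prob{d_1(f_n,f) > n^{-\varepsilon}} = O(n^{\varepsilon - 1/2})$, which is $O(n^{-\varepsilon})$ once $\varepsilon \le 1/4$, comfortably implied by the stated range $\varepsilon \le \frac{\eta}{2(\eta+2)} < \frac14$.

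The size-biased term $d_1(f_n^*,f^*)$ is the harder part and is where the full strength of the $(2+\eta)$-moment assumption is consumed. Here $F_n^*$ is the CDF of the size-biased empirical law, and the difference $F_n^*(k)-F^*(k)$ involves both the numerators $\frac{1}{n}\sum_i \ell\ind{D_i \le k, D_i = \ell}$ and the random normalization $L_n/n \to \Exp{\mathscr{D}}$. I would decompose $F_n^*(k)-F^*(k)$ into a term where the empirical mean $L_n/n$ is replaced by $\Exp{\mathscr{D}}$ plus a correction controlling the fluctuation of $L_n/n$; on $\Omega_n$ the latter is itself controlled by $d_1(f_n,f)$. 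The leading term is again an average of i.i.d. centered variables, but now the summands carry a factor of the degree, so second-moment bounds on the increments require finiteness of $\Exp{\mathscr{D}^{2+\eta}}$ rather than just $\Exp{\mathscr{D}^2}$; the extra $\eta$ is exactly what makes the analogue of the tail sum $\sum_k (\cdots)^{1/2}$ converge and what dictates the admissible exponent $\varepsilon \le \frac{\eta}{2(\eta+2)}$.

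The main obstacle, then, is the size-biased estimate: one must simultaneously handle the degree-weighting (which pushes the required integrability up to $2+\eta$) and the random normalization by $L_n$, and track the resulting exponents carefully to verify that the chosen $\varepsilon$ yields a bound of the form $O(n^{-\varepsilon})$. Once both tail bounds are in hand, a union bound gives $\Prob{\Omega_n^c} = O(n^{-\varepsilon})$, and identifying $\alpha = \varepsilon$ in the statement of Assumption~\ref{asmp:regularity_degrees} completes the argument.
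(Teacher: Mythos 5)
Your treatment of the first distance $d_1(f_n,f)$ is sound: the variance/Cauchy--Schwarz bound $\Exp{\abs{F_n(k)-F(k)}} \le n^{-1/2}(1-F(k))^{1/2}$ is correct, and even though your justification of the tail summability is misstated (a finite $(2+\eta)$-moment does \emph{not} imply a regularly-varying tail), the conclusion survives because Markov's inequality gives $1-F(k) \le \Exp{\mathscr{D}^{2+\eta}}\,k^{-(2+\eta)}$, hence $\sum_{k\ge 1}(1-F(k))^{1/2} \lesssim \sum_{k \ge 1} k^{-1-\eta/2} < \infty$. This is a legitimate alternative to the paper's route, which instead quotes a Wasserstein convergence rate (\cite[Proposition~4]{Chen2015}) requiring only a $(1+\eta)$-moment; your bound $O(n^{\varepsilon - 1/2})$ is in fact better than the paper's $O(n^{\varepsilon - \eta/(1+\eta)})$ under the stated hypothesis.

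The genuine gap is in the size-biased term, which you correctly identify as the hard part but whose proposed resolution fails. Your plan is to center the summands $X_{ik} = D_i\ind{D_i>k} - \Exp{\mathscr{D}\ind{\mathscr{D}>k}}$ and run the same second-moment scheme, claiming that ``the extra $\eta$ is exactly what makes the analogue of the tail sum $\sum_k(\cdots)^{1/2}$ converge.'' This is false: under $\Exp{\mathscr{D}^{2+\eta}}<\infty$ the best available bound is $\Exp{\mathscr{D}^2\ind{\mathscr{D}>k}} \le k^{-\eta}\Exp{\mathscr{D}^{2+\eta}}$, so Cauchy--Schwarz produces summands of order $n^{-1/2}k^{-\eta/2}$, and since $\eta < 1$ the series $\sum_{k\ge1} k^{-\eta/2}$ \emph{diverges} (a pure variance argument would only close with a moment beyond the fourth). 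This is precisely why the paper abandons $L^2$ here: after splitting off the $L_n$-fluctuation (controlled on the event $A_n = \{d_1(f_n,f) \le n^{-\varepsilon}\}$ rather than on $\Omega_n$ itself, which as you phrase it is circular), it applies Markov, then H\"older, then Burkholder's inequality (Lemma~\ref{lem:burkholder}) with the fractional exponent $p = 1/(1-2\varepsilon) \in (1, 1+\eta/2]$. Working in $L^p$ with $p$ near $1$ sacrifices the $n^{-1/2}$ rate (one only gets $n^{1/p-1}$) but makes the $k$-th summand decay like $k^{-(2+\eta)/p+1}$, which is summable exactly in the regime $p < 1+\eta/2$; balancing $n^{\varepsilon + 1/p - 1} = O(n^{-\varepsilon})$ against that constraint is what produces the admissible range $\varepsilon \le \eta/(2(\eta+2))$ --- a constraint your scheme cannot reproduce, since it never closes. (One could alternatively repair your approach by truncation: use Cauchy--Schwarz for $k \le K_n$ and the triangle-inequality bound $\Exp{\abs{\tfrac{1}{n}\sum_{i=1}^n X_{ik}}} \le 2\Exp{\mathscr{D}\ind{\mathscr{D}>k}} \lesssim k^{-1-\eta}$ for $k > K_n$, then optimize $K_n \asymp n^{1/(2+\eta)}$, recovering the same exponent. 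But some such extra idea --- truncation or a $p$-th moment inequality with $p$ close to $1$ --- is indispensable, and it is missing from your argument.)
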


We remark that the above result still holds if, instead of $\Omega_n$, we consider the event
\[
	\{ \max\{ d_1(f_n,f),d_1(f_n^*,f^*) \}\leq K n^{-\varepsilon} \},
\]
for any $K > 0$.

The second regularity assumption imposes, in a similar fashion, the convergence of the joint degree distribution $h_n(k,\ell)$, of the
degrees at both ends of a randomly selected edge.

\begin{assumption}[Regularity of the joint distribution]\label{asmp:regularity_structure}
There exists a joint probability density function $h(k,\ell)$ on the positive integers and some
$\kappa>0$, such that if
\begin{equation*}
	\Gamma_n= \left\{\sum_{k, \ell =1}^{\infty} \abs{h_n(k,\ell)-h(k,\ell)} \ind{f_n(k)>0}
	\leq n^{-\kappa}\right\},
\end{equation*}
then we have $\lim_{n\To \infty} \Prob{\Gamma_n} = 1$.
\end{assumption}

Assumption \ref{asmp:regularity_structure} is satisfied for ${\bf D}_n=\texttt{IID}(\mathscr{D})$ in several models, for example, the Configuration Model (see
\cite[Proposition 6.2]{Hoorn2016b}) and the Maximally Disassortative Graph Algorithm (see \cite[Theorem 3.3]{Hoorn2016}).

We are now ready to state the main result of this section.

\begin{theorem}\label{thm:convergence_annd}
Let $\{G_n\}_{n \ge 1}$ be a sequence of graphs, which satisfies Assumptions \ref{asmp:regularity_degrees} and
\ref{asmp:regularity_structure} and assume that the limiting distribution $f$ has a finite $(2+\eta)$-moment for some $0 < \eta < 1$. Then, for each fixed $k$ such that $f(k)>0$ and each $0 < \delta < min\left\{ \varepsilon, \frac{\kappa \eta}{\eta+1}\right\} $,
\begin{equation*}
 \lim_{n\To\infty} \Prob{\abs{\Phi_n(k) -{\Phi}(k) }\leq n^{-\delta}} =1\,.
\end{equation*}
\end{theorem}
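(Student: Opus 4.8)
The plan is to work throughout on the high-probability event $\Omega_n \cap \Gamma_n$, whose complement has probability at most $\Prob{\Omega_n^c} + \Prob{\Gamma_n^c} = O(n^{-\alpha}) + o(1) \to 0$ by Assumptions~\ref{asmp:regularity_degrees} and~\ref{asmp:regularity_structure}. On this event I can use the pointwise bounds $\abs{f_n(k) - f(k)} \le 2 d_1(f_n,f) \le 2n^{-\varepsilon}$ and $\abs{f_n^*(k) - f^*(k)} \le 2 d_1(f_n^*,f^*) \le 2n^{-\varepsilon}$, which follow directly from the definition~\eqref{eq:def_kantorovich_distance} of $d_1$ (since $\max_j\abs{F_n(j)-F(j)} \le d_1(f_n,f)$). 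Because $f(k) > 0$ implies $f^*(k) > 0$, for all large $n$ the denominator satisfies $f_n^*(k) \ge f^*(k)/2 > 0$ and the indicator $\ind{f_n(k)>0}$ equals $1$; hence both $\Phi_n(k)$ and $\Phi(k)$ are given by their ratio forms without the indicators.

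First I would separate the contributions of the numerator and the denominator. Writing $N_n = \sum_{\ell>0} h_n(k,\ell)\ell$, $N = \sum_{\ell>0}h(k,\ell)\ell$, $d_n = f_n^*(k)$ and $d = f^*(k)$, and noting that $N \le \sum_\ell \ell f^*(\ell) = \Exp{\mathscr{D}^2}/\Exp{\mathscr{D}} < \infty$ is a finite constant, the elementary bound
\[
\abs{\Phi_n(k) - \Phi(k)} = \frac{\abs{(N_n-N)d + N(d-d_n)}}{d_n d} \le \frac{2}{d}\abs{N_n-N} + \frac{2N}{d^2}\abs{d-d_n}
\]
reduces the problem to estimating $\abs{N_n-N}$, since the denominator term is already $O(n^{-\varepsilon})$.

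The core of the argument, and the main obstacle, is controlling $\abs{N_n - N} = \abs{\sum_\ell (h_n(k,\ell)-h(k,\ell))\ell}$: Assumption~\ref{asmp:regularity_structure} only controls $h_n$ in a total-variation sense, whereas the weight $\ell$ is unbounded. I would resolve this by truncating at a level $M = M_n$ and splitting
\[
\abs{N_n-N} \le \Big|\sum_{\ell\le M}(h_n(k,\ell)-h(k,\ell))\ell\Big| + \sum_{\ell > M}h_n(k,\ell)\ell + \sum_{\ell > M}h(k,\ell)\ell.
\]
For the truncated term, since $\ind{f_n(k)>0}=1$ the single-$k$ sum is dominated by the double sum in $\Gamma_n$, giving $\sum_{\ell\le M}\abs{h_n(k,\ell)-h(k,\ell)} \le n^{-\kappa}$ and hence a bound $M n^{-\kappa}$. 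For the limiting tail, the marginal bound $h(k,\ell) \le f^*(\ell)$ together with the finite $(2+\eta)$-moment gives $\sum_{\ell>M}h(k,\ell)\ell \le \sum_{\ell>M}\ell f^*(\ell) = \Exp{\mathscr{D}}^{-1}\sum_{\ell>M}\ell^2 f(\ell) = O(M^{-\eta})$, using $\sum_{\ell>M}\ell^2 f(\ell) \le M^{-\eta}\Exp{\mathscr{D}^{2+\eta}}$. The empirical tail is handled the same way, $\sum_{\ell>M}h_n(k,\ell)\ell \le \sum_{\ell>M}\ell f_n^*(\ell)$, and then compared to the limiting tail through Assumption~\ref{asmp:regularity_degrees}: via the tail-expectation identity for $f_n^*$ together with $\abs{F_n^*(\ell)-F^*(\ell)}\le n^{-\varepsilon}$ and $\sum_\ell \abs{F_n^*(\ell)-F^*(\ell)} = d_1(f_n^*,f^*)\le n^{-\varepsilon}$, one obtains $\sum_{\ell>M}\ell f_n^*(\ell) = O(M^{-\eta}) + O(Mn^{-\varepsilon})$.

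Finally I would optimize the truncation level by taking $M = \ceil{n^\beta}$. Collecting the bounds gives $\abs{N_n - N} = O(n^{\beta-\kappa} + n^{-\beta\eta} + n^{\beta-\varepsilon})$, and balancing the first two terms at $\beta = \kappa/(1+\eta)$ yields the rate $n^{-\kappa\eta/(1+\eta)}$; together with the $O(n^{-\varepsilon})$ denominator contribution this produces a bound of order $n^{-\delta'}$ for any $\delta'$ strictly below $\min\{\varepsilon, \kappa\eta/(\eta+1)\}$. For $\delta$ below this minimum the implied constants are absorbed for large $n$, so the event $\{\abs{\Phi_n(k)-\Phi(k)}\le n^{-\delta}\}$ contains $\Omega_n\cap\Gamma_n$ eventually and therefore has probability tending to $1$. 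The delicate point is the simultaneous choice of $\beta$ respecting $\beta<\kappa$, $\beta<\varepsilon$ and the balance $\beta = \kappa/(1+\eta)$; the finite $(2+\eta)$-moment is exactly what makes the tail exponent $\eta$ positive and the optimization feasible.
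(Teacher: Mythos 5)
Your high-level architecture coincides with the paper's (work on $\Omega_n\cap\Gamma_n$, split off the denominator, truncate the numerator sum at $M=\ceil{n^\beta}$), and most of your estimates are correct: the denominator contribution $O(n^{-\varepsilon})$, the truncated term $Mn^{-\kappa}$, and the limiting tail $O(M^{-\eta})$ all match the paper's $\Xi_n^{(1)}$, $\Xi_n^{(3)}$ and the deterministic half of $\Xi_n^{(4)}$. The genuine gap is in your treatment of the \emph{empirical} tail $\sum_{\ell>M}\ell f_n^\ast(\ell)$ and the ensuing optimization. Your bound for it, $O(M^{-\eta})+O(Mn^{-\varepsilon})$, derived purely from $d_1(f_n^\ast,f^\ast)\le n^{-\varepsilon}$, introduces the extra constraint $\delta<\varepsilon-\beta$ alongside $\delta<\kappa-\beta$ and $\delta<\beta\eta$. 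With your choice $\beta=\kappa/(1+\eta)$ the term $n^{\beta-\varepsilon}$ is harmless only when $\kappa\le\varepsilon$; if $\kappa>\varepsilon$ the claimed range of $\delta$ is not covered, and if $\kappa\ge(1+\eta)\varepsilon$ that term does not even tend to zero. Re-optimizing does not rescue this: the best your three constraints permit is $\beta=\min\{\kappa,\varepsilon\}/(1+\eta)$, giving $\delta<\eta\min\{\kappa,\varepsilon\}/(1+\eta)$, which is strictly smaller than the theorem's range $\min\{\varepsilon,\kappa\eta/(\eta+1)\}$ whenever $\kappa>\varepsilon$ --- a regime that is allowed, since $\kappa$ and $\varepsilon$ come from two independent assumptions. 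You flag this tension yourself in the last sentence but do not resolve it, and indeed it cannot be resolved inside your framework: given only $d_1(f_n^\ast,f^\ast)\le n^{-\varepsilon}$, the loss $Mn^{-\varepsilon}$ is sharp (move mass $n^{-\varepsilon}$ from the value $M$ to $M+1$; then $d_1$ changes by $n^{-\varepsilon}$ while $\sum_{\ell>M}\ell f^\ast(\ell)$ changes by $(M+1)n^{-\varepsilon}$).

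The paper avoids this by \emph{not} bounding the empirical tail deterministically on $\Omega_n$. It writes $\sum_{\ell>M} h_n(k,\ell)\ell\le M^{-\eta}\sum_{\ell>0}f_n^\ast(\ell)\ell^{1+\eta}=M^{-\eta}L_n^{-1}\sum_{i=1}^n D_i^{2+\eta}$ and then controls the random quantity $\frac1n\sum_{i=1}^n D_i^{2+\eta}$ \emph{in probability}, via Markov's inequality and $\Exp{\mathscr{D}^{2+\eta}}<\infty$, which yields $\Prob{\Xi_n^{(4)}>cn^{-\delta},\,\Omega_n\cap\Gamma_n}=O\left(n^{\delta-\beta\eta}\right)$ with no $\varepsilon$-dependence at all. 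The surviving constraints are then only $\delta<\varepsilon$, $\delta<\kappa-\beta$ and $\delta<\beta\eta$, and $\beta=\kappa/(1+\eta)$ recovers the full claimed range. To repair your proof, replace your $d_1$-based estimate of the empirical tail by this Markov/moment step (accepting a bound that holds in probability rather than pointwise on $\Omega_n$); the rest of your argument then goes through verbatim.
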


The proof is based on splitting of $\abs{\Phi_n(k) -{\Phi}(k) }$ in several terms and bounding each of them  separately, using Assumptions~\ref{asmp:regularity_degrees} and
\ref{asmp:regularity_structure}. We give the proof in Section~\ref{ssec:proof_annd_general_random_graphs}.

\section{Limiting behavior of $\Phi_n$ in Configuration Models}\label{sec:convergence_annd_cm}

In \cite{Hofstad2014,Hoorn2014} it was shown that different measures
for degree-degree correlations converge to zero, as $n \to \infty$, for both the multi-graph configuration model, as
well as the repeated and erased versions. Therefore, one would expect that $\Phi_n(k)$ converges to some constant, independent of $k$. We will show that this is
indeed the case for all three models when we consider ${\bf D}_n=\texttt{IID}(\mathscr{D})$, where
$\mathscr{D}$ has finite $(2 + \eta)$ moment. When the second moment is infinite and $\mathscr{D}$ is regularly
varying, we establish a central limit theorem, where the limiting random variable has a stable distribution.

\subsection{Multi-graphs, finite variance of the degrees}

We first consider the case when $\mathscr{D}$ has finite $(2+\eta)$-moment. In \cite{Hoorn2016b} it is proven
that the directed version of the configuration model satisfies Assumption
\ref{asmp:regularity_structure}, where $h(k, \ell) = f^\ast(k)f^\ast(\ell)$. The proof can be
adjusted in a straight-forward manner to the undirected case. If we plug this result for $h(k, \ell)$
into \eqref{eq:annd_limit} and define $\nu_p = \Exp{\mathscr{D}^p}$ for $p = 1, 2$, we get
\[
	\Phi(k) = \frac{\sum_{\ell > 0} f^\ast(k)f^\ast(\ell)\ell}{f^\ast(k)}
	= \sum_{\ell > 0} \frac{f(\ell) \ell^2}{\nu_1} = \frac{\nu_2}{\nu_1}.
\]
Hence, it follows from Theorem \ref{thm:convergence_annd} that
\begin{equation}\label{eq:convergence_annd_cm_weak}
	\lim_{n \to \infty} \Prob{\abs{\Phi_n(k) -\frac{\nu_2}{\nu_1}} > n^{-\delta}} = 0.
\end{equation}

With a little extra work, we can prove the following stronger result, which states that
the convergence in probability is uniform in $k$, and gives an upper bound on the speed of
convergence. The proof exploits the construction of CM and is provided in Section~\ref{sec:proofs_cm}.
\begin{theorem}\label{thm:convergence_annd_cm_strong}
Let $\mathscr{D}$ be an integer-valued random variable which satisfies $\Exp{\mathscr{D}^{2+\eta}} <
\infty$, for some $0 < \eta < 1$, and let $\{G_n\}_{n \ge 1}$ be a sequence of graphs generated by
CM with ${\bf D}_n=\emph{\texttt{IID}}(\mathscr{D})$. Then, for any $0<\delta < \frac{\eta}{2(\eta+2)}$,
\[
	\sup_{k \ge 0} \, \Prob{ \abs{\Phi_n(k) -\frac{\nu_2}{\nu_1}}1_{\{ f_n(k)>0 \}} > n^{-\delta}}
	 = O\left(n^{-\frac{\delta}{2}} + n^{\delta- \frac{\eta}{2(\eta + 2)}}\right).
\]
\end{theorem}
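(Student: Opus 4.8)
The plan is to condition on the degree sequence ${\bf D}_n$ and to use that, given ${\bf D}_n$, the graph is produced by a uniform random matching of the $L_n=\sum_i D_i$ half-edges. Write $N_k=\sum_i\ind{D_i=k}$ for the number of degree-$k$ nodes and $m_p=\sum_i D_i^p$. On $\{f_n(k)>0\}$ we have $f_n^\ast(k)=kN_k/L_n$, so \eqref{eq:def_Phi_n} becomes the half-edge average
\begin{equation*}
\Phi_n(k)=\frac{1}{kN_k}\sum_{i\to j}\ind{D_i=k}\,D_j .
\end{equation*}
A single half-edge of a degree-$k$ node is matched to one of the other $L_n-1$ half-edges uniformly, so the partner degree has conditional mean $(m_2-k)/(L_n-1)$, the same for every such half-edge; hence $M_n(k):=\CExp{\Phi_n(k)}{{\bf D}_n}=(m_2-k)/(L_n-1)$ on $\{f_n(k)>0\}$. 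First I would show $M_n(k)$ is uniformly close to $\nu_2/\nu_1$. Its leading part $m_2/L_n$ equals the mean of the size-biased empirical law $f_n^\ast$, so $\abs{m_2/L_n-\nu_2/\nu_1}\le d_1(f_n^\ast,f^\ast)\le n^{-\varepsilon}$ on the event $\Omega_n$ of Theorem~\ref{thm:convergence_degree_densities}, taking $\varepsilon=\frac{\eta}{2(\eta+2)}$. Since any present degree obeys $k^{2+\eta}\le m_{2+\eta}$, on a further event where $m_{2+\eta}=O(n)$ we have $k=O(n^{1/(2+\eta)})$, so the difference between $M_n(k)$ and $m_2/L_n$ is $O(k/L_n+m_2/L_n^2)=o(n^{-\delta})$. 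As $\delta<\varepsilon$, the mean part contributes at most $\Prob{\Omega_n^c}=O(n^{-\varepsilon})=O(n^{\delta-\varepsilon})$, the second error term.

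For the fluctuation $\Phi_n(k)-M_n(k)$ I would apply Chebyshev's inequality after bounding the conditional variance. Given ${\bf D}_n$, the partner half-edges are selected without replacement by the uniform matching, so their degrees are negatively correlated, and the finite-population variance of one partner degree is at most $m_3/L_n$; therefore
\begin{equation*}
\mathrm{Var}\!\left(\Phi_n(k)\mid{\bf D}_n\right)\le\frac{1}{kN_k}\cdot\frac{m_3}{L_n},
\end{equation*}
a bound whose justification for the matching is short but necessary. Chebyshev's inequality then yields, on $\{f_n(k)>0\}$,
\begin{equation*}
\CProb{\abs{\Phi_n(k)-M_n(k)}>\tfrac12 n^{-\delta}}{{\bf D}_n}\le\frac{4n^{2\delta}m_3}{kN_kL_n}.
\end{equation*}

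The decisive difficulty is that $m_3/L_n$ need not be integrable: with only a finite $(2+\eta)$-moment and $\eta<1$ the limiting third moment may be infinite, so the variance bound cannot simply be averaged over ${\bf D}_n$. I would handle this by truncating the maximum degree at $b_n=n^{(1+\delta/2)/(2+\eta)}$; Markov's inequality and a union bound give $\Prob{\max_i D_i>b_n}=O(n^{-\delta/2})$, the source of the first error term. On $\{\max_i D_i\le b_n\}$ the interpolation $m_3\le(\max_i D_i)^{1-\eta}m_{2+\eta}\le b_n^{1-\eta}m_{2+\eta}$ reduces the variance bound to $O(b_n^{1-\eta}/(kN_k))$ on the event where $m_{2+\eta}=O(n)$. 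To average the surviving factor $1/(kN_k)$ I would use the binomial estimate $\Exp{N_k^{-1}\ind{N_k\ge1}}\le C/(nf(k))$, together with the pointwise tail bound $f(k)\le\nu_{2+\eta}k^{-(2+\eta)}$ obtained from Markov's inequality.

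Taking the supremum over $k$ is the final step and is where uniformity must be earned. For small $k$ the count $N_k$ is of order $n$, so the Chebyshev bound is negligible; for large $k$ the variance estimate deteriorates, but the presence probability $\Prob{f_n(k)>0}\le nf(k)$ becomes small, so I would combine the Chebyshev estimate with this trivial bound by taking their minimum and balancing the two regimes through the tail bound for $f(k)$. After optimizing the truncation level, the worst case sits at an intermediate $k$ and the whole fluctuation contribution is dominated by $n^{-\delta/2}+n^{\delta-\varepsilon}$. The main obstacle throughout is precisely this interplay between the possibly infinite third moment---which forces the degree truncation and is responsible for the $n^{-\delta/2}$ term---and the requirement that the bound hold uniformly in $k$; by contrast, the conditional-mean part is routine once Theorem~\ref{thm:convergence_degree_densities} is available.
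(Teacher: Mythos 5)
Your strategy --- exact conditional mean $(m_2-k)/(L_n-1)$, conditional Chebyshev, truncation of the maximal degree at $b_n$, and balancing against the presence probability $\Prob{f_n(k)>0}\le nf(k)$ --- is genuinely different from the paper's proof, which writes $h_n(k,\ell)-f_n^\ast(k)f_n^\ast(\ell)=\sum_{i,j}X_{ij}(k,\ell)$, cuts the \emph{partner} degree at $w_n=\floor{n^{1/(2(\eta+2))}}$, bounds the tail $\ell>w_n$ by a conditional first-moment estimate, and bounds the bulk $\ell\le w_n$ via a conditional second-moment lemma whose right-hand side is proportional to $f_n^\ast(k)^2$, so that the prefactor $1/f_n^\ast(k)$ cancels and uniformity in $k$ needs no presence-probability argument. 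Your proposal has two gaps. The first is that the claimed negative correlation is false: a uniform matching is \emph{not} sampling without replacement, because two half-edges $e_1\ne e_2$ attached to degree-$k$ nodes can be paired \emph{with each other} (a self-loop or a $k$--$k$ multi-edge), on which event both partner degrees equal $k$. The smallest example already refutes the claim: one node of degree $2$ and two nodes of degree $1$ give $\mathrm{Cov}\left(Z_{e_1},Z_{e_2}\mid{\bf D}_n\right)=2/9>0$ for the two half-edges of the degree-$2$ node. In general one computes
\begin{equation*}
\mathrm{Cov}\left(Z_{e_1},Z_{e_2}\mid{\bf D}_n\right)\;\ge\;\frac{k^2}{L_n-1}-\frac{m_3+2km_2}{(L_n-1)(L_n-3)},
\end{equation*}
which is of order $+k^2/L_n$ once $k^2\gg (m_3+km_2)/L_n$, so your variance bound must carry an extra $O(k^2/L_n)$ term. (This is exactly the diagonal case $i=s\ne j=t$ in the paper's Lemma \ref{lem:variance_Xij_kl}, the one whose verification the paper leaves to ``similar computations''.) This particular gap is reparable: multiplying $n^{2\delta}k^2/L_n$ by $\Prob{N_k\ge 1}\le \nu_{2+\eta}\,n k^{-(2+\eta)}$ and maximizing over $k$ gives $O(n^{2\delta-\eta/(2+\eta)})=O\left(n^{\delta-\frac{\eta}{2(\eta+2)}}\right)$, within the claimed rate.

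The decisive gap is quantitative: even granting your variance bound, Chebyshev with $m_3\le b_n^{1-\eta}m_{2+\eta}$, balanced against $nf(k)$, does not achieve the claimed rate on the whole range $\delta<\frac{\eta}{2(\eta+2)}$. Take $\eta=\tfrac12$, $\delta=0.09<0.1=\frac{\eta}{2(\eta+2)}$, and $f(k)\asymp k^{-5/2}$ up to a logarithmic correction making $\Exp{\mathscr{D}^{5/2}}$ finite. Then $b_n\approx n^{0.418}$, $m_3\lesssim n^{1.254}$, and your averaged Chebyshev bound is $\asymp n^{2\delta}m_3/(kn^2f(k))\asymp n^{-0.566}k^{3/2}$, while the presence bound is $nf(k)\asymp nk^{-5/2}$; the two cross at $k\asymp n^{0.39}$, where both are of order $n^{0.02}$, so the minimum you propose to take \emph{diverges}, whereas the theorem asserts $O(n^{-0.045}+n^{-0.01})$. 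For such near-critical $f$ the worst-$k$ value of your bound is of order $n^{(\delta(11+4\eta)/2-\eta)/(3+2\eta)}$, which stays within the claimed rate only for $\delta\le\frac{\eta}{7+3\eta}$ and actually diverges once $\delta>\frac{2\eta}{11+4\eta}$; both thresholds are strictly below $\frac{\eta}{2(\eta+2)}$. The root cause is that a single (truncated) third-moment Chebyshev bound is too lossy for the heavy-tailed partner degrees. The repair is a Fuk--Nagaev-type split of the partner degree itself at a level $w_n$: on $\{Z\le w_n\}$ the conditional second moment is at most $w_nm_2/L_n$ rather than $m_3/L_n$, while the contribution of $\{Z>w_n\}$ is controlled in first moment via $\Expn{Z\ind{Z>w_n}}\le w_n^{-\eta}m_{2+\eta}/(L_n-1)$, costing only $O(n^{\delta}w_n^{-\eta})$ after Markov's inequality. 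This is precisely the role of the cutoff $w_n$ in the paper's proof; without it, your argument establishes the theorem only on a strictly smaller range of $\delta$ than stated.
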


\subsection{Multi-graph, infinite variance of the degrees}

It is important to note that the finite second moment condition in Theorem
\ref{thm:convergence_annd_cm_strong} can not be relaxed to the case where $\mathscr{D}$ has only
finite mean, since then $\sum_{\ell > 0} \ell^2 f(\ell)$ is no longer finite and hence
$\sum_{\ell > 0} \ell^2 f_n(\ell)$ diverges as $n \to \infty$. Consequently $\Phi_n(k)$ will
increase as $n \to \infty$.

In order to understand how $\Phi_n(k)$ scales with $n$ observe that for CM we have
\[
	\Phi_n(k) = \frac{\sum_{\ell > 0} h_n(k,\ell)\ell}{f_n^\ast(k)}
	\approx \frac{1}{L_n}\sum_{\ell > 0} \ell^2 f_n(\ell) = \sum_{i = 1}^n \frac{D_i^2}{L_n}.
\]
When $D_i$ are sampled from a regularly-varying random variable with exponent $1 < \gamma < 2$, it
follows that $\sum_{i = 1}^n D_i^2$ scales as $n^{2/\gamma}$ and $L_n = \sum_{i = 1}^n D_i \approx n\Exp{\mathscr{D}}$.
It now follows that $\Phi_n(k)$ scales as $n^{2/\gamma - 1}$. These scaling terms can be made exact by
adding slowly-varying functions, and we show in Theorem \ref{thm:annd_clt_configuration_model}
that $\Phi_n(k)$ rescaled with $n^{2/\gamma - 1}$ converges to a random variable with a stable
distribution.

Before formulating the result, we need a weaker version of Assumption \ref{asmp:regularity_degrees}, for this does not hold anymore when the second moment of the degrees is infinite.
\begin{assumption}\label{asmp:convergence_distribution_first_moment}
There exist a probability density $f$ on non-negative integers with size biased version $f^\ast$ and
some $\alpha,\varepsilon > 0$, such that if we set
\[
	\Omega_n:=\{ \max\{ d_1(f_n,f),d_{tv}(f_n^*,f^*) \}\leq n^{-\varepsilon} \},
\] 
then, as $n \to \infty$,
\begin{equation*}
  \Prob{\Omega_n^c}=O(n^{-\alpha})\,.
\end{equation*}
\end{assumption}
Observe that this assumption resembles Assumption \ref{asmp:regularity_degrees} with the only exception that we replaced the Kantorovich-Rubenstein distance $d_1$ for the size-biased degree distribution $f^\ast$ by the total variation distance $d_{tv}$. This is because the convergence of the Kantorovich-Rubinstein distance implies the convergence of the first moment, which does not hold for the size-biased distribution when $\mathscr{D}$ has infinite variance. The next theorem from \cite{Hoorn2016b} shows that Assumption~\ref{asmp:convergence_distribution_first_moment} holds in CM with ${\bf D}_n=\texttt{IID}(\mathscr{D})$.

\begin{theorem}[\cite{Hoorn2016b} Theorem 3.1]\label{thm:convergence_distribution_first_moment}
Suppose that  $\mathbb{E}(\mathscr{D}^{1+\eta})< \infty$, for some $0 < \eta < 1$. Let ${\bf D}_n={\emph{\texttt{IID}}}(\mathscr{D})$,
$0<\varepsilon \leq \frac{\eta}{4(\eta+2)}$ and $\Omega_n$ as defined in Assumption \ref{asmp:convergence_distribution_first_moment}. Then, as $n \to \infty$,
\begin{equation*}
  \mathbb{P}(\Omega_n^c)=O(n^{-\varepsilon})\,,
\end{equation*}
so that, in particular, ${\bf D}_n$ satisfies Assumption \ref{asmp:convergence_distribution_first_moment} with $\alpha = \varepsilon$.
\end{theorem}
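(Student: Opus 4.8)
The plan is to bound $\Prob{\Omega_n^c}$ by a union bound over $\{d_1(f_n,f) > n^{-\ep}\}$ and $\{d_{tv}(f_n^*,f^*) > n^{-\ep}\}$, estimating the expected distance in each case and then applying Markov's inequality. Write $\mu=\Exp{\mathscr{D}}$ and $\bar{D}_n=\frac1n\sum_{i=1}^n D_i$, so that $f_n^*(k)=kf_n(k)/\bar{D}_n$ and $f^*(k)=kf(k)/\mu$. The two distances are treated separately precisely because, with only a $(1+\eta)$-moment, $f^*$ has infinite mean; this is why the statement uses $d_{tv}$ rather than $d_1$ for the size-biased part, and it dictates the decomposition below.

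For the first term I would estimate $\Exp{d_1(f_n,f)}=\sum_{k\ge0}\Exp{\abs{F_n(k)-F(k)}}$. Since $nF_n(k)$ is $\mathrm{Binomial}(n,F(k))$, Cauchy--Schwarz gives $\Exp{\abs{F_n(k)-F(k)}}\le\sqrt{F(k)(1-F(k))/n}\le\tfrac{1}{2\sqrt n}$, while the triangle inequality gives the deterministic tail bound $\abs{F_n(k)-F(k)}\le(1-F_n(k))+(1-F(k))$. Splitting at a level $M$, the head contributes $\bigO{M/\sqrt n}$ and the tail contributes $2\Exp{(\mathscr{D}-M)^+}=\bigO{M^{-\eta}}$, using that $\Exp{\mathscr{D}^{1+\eta}}<\infty$ forces $\Prob{\mathscr{D}>k}=\bigO{k^{-(1+\eta)}}$. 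Balancing at $M\asymp n^{1/(2(1+\eta))}$ yields $\Exp{d_1(f_n,f)}=\bigO{n^{-\eta/(2(1+\eta))}}$, so Markov gives $\Prob{d_1(f_n,f)>n^{-\ep}}=\bigO{n^{\ep-\eta/(2(1+\eta))}}$, which is $\bigO{n^{-\ep}}$ once $\ep\le\frac{\eta}{4(\eta+1)}$.

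For the size-biased term I would use $f_n^*(k)-f^*(k)=\frac{k}{\bar{D}_n}(f_n(k)-f(k))+kf(k)\bigl(\frac{1}{\bar{D}_n}-\frac1\mu\bigr)$ to split
\[
d_{tv}(f_n^*,f^*)\le\frac{1}{2\bar{D}_n}\sum_{k\ge1}k\abs{f_n(k)-f(k)}+\frac{\abs{\bar{D}_n-\mu}}{2\bar{D}_n}.
\]
On the event $\{\bar{D}_n\ge\mu/2\}$ both denominators are bounded below. The fluctuation $\bar{D}_n-\mu$ must be handled with care since $\mathscr{D}$ has infinite variance, so Chebyshev is unavailable; here I would invoke the von Bahr--Esseen inequality with $p=1+\eta\in[1,2]$ to obtain $\Exp{\abs{\bar{D}_n-\mu}^{1+\eta}}\le 2n^{-\eta}\Exp{\abs{D_1-\mu}^{1+\eta}}=\bigO{n^{-\eta}}$. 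This simultaneously controls $\Prob{\bar{D}_n<\mu/2}=\bigO{n^{-\eta}}$ and, via Markov, the second term: $\Prob{\abs{\bar{D}_n-\mu}>\tfrac\mu2 n^{-\ep}}=\bigO{n^{\ep(1+\eta)-\eta}}=\bigO{n^{-\ep}}$ whenever $\ep\le\frac{\eta}{\eta+2}$.

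The main obstacle is the weighted sum $W_n:=\sum_{k\ge1}k\abs{f_n(k)-f(k)}$, because the size-biasing weight $k$ emphasizes exactly the heavy tail where only $(1+\eta)$ moments are available, and the naive first-moment bound $\tfrac{1}{\sqrt n}\sum_k k\sqrt{f(k)}$ diverges. I would again truncate at a level $M$. For the head $k\le M$, Cauchy--Schwarz against $\Exp{\abs{f_n(k)-f(k)}}\le\sqrt{f(k)/n}$ gives $\sum_{k\le M}k\sqrt{f(k)}\le(\sum_k k^{1+\eta}f(k))^{1/2}(\sum_{k\le M}k^{1-\eta})^{1/2}=\bigO{M^{1-\eta/2}}$ since $\sum_k k^{1+\eta}f(k)=\Exp{\mathscr{D}^{1+\eta}}<\infty$, so the head is $\bigO{M^{1-\eta/2}/\sqrt n}$; for the tail $k>M$ the crude bound $\sum_{k>M}k\abs{f_n(k)-f(k)}\le\sum_{k>M}kf_n(k)+\sum_{k>M}kf(k)$ has expectation $2\Exp{\mathscr{D}\ind{\mathscr{D}>M}}=\bigO{M^{-\eta}}$. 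Balancing at $M\asymp n^{1/(2+\eta)}$ gives $\Exp{W_n}=\bigO{n^{-\eta/(2+\eta)}}$, so Markov yields $\Prob{W_n>\tfrac\mu2 n^{-\ep}}=\bigO{n^{\ep-\eta/(2+\eta)}}=\bigO{n^{-\ep}}$ once $\ep\le\frac{\eta}{2(\eta+2)}$. Collecting the three constraints $\ep\le\frac{\eta}{4(\eta+1)}$, $\ep\le\frac{\eta}{2(\eta+2)}$ and $\ep\le\frac{\eta}{\eta+2}$, the clean choice $\ep\le\frac{\eta}{4(\eta+2)}$ in the statement lies safely inside all of them, and the union bound then delivers $\Prob{\Omega_n^c}=\bigO{n^{-\ep}}$.
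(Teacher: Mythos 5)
Your proof is correct, and it takes a genuinely different route from the paper's. Note first that the paper never proves this statement itself: it is imported from \cite{Hoorn2016b}, and the only in-paper proof of this type is that of Theorem~\ref{thm:convergence_degree_densities} (the $(2+\eta)$-moment, $d_1$--$d_1$ analogue), which the authors describe as a straightforward extension of the external proof. That template works as follows: the term $d_1(f_n,f)$ is dispatched by citing \cite{Chen2015}, which gives $\Exp{d_1(f_n,f)} = \bigO{n^{-\eta/(1+\eta)}}$ directly, and the size-biased term is handled by centering the per-$k$ sums $\sum_i D_i\ind{D_i>k}$, then applying Markov, H\"older and the Burkholder-type Lemma~\ref{lem:burkholder} for each $k$ before summing. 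You do neither: for $d_1(f_n,f)$ you re-derive a rate from scratch via binomial variance bounds plus truncation, getting the weaker $\bigO{n^{-\eta/(2(1+\eta))}}$, which is nevertheless enough for all $\varepsilon \le \eta/(4(\eta+1))$ and hence covers the stated range $\varepsilon \le \eta/(4(\eta+2))$; and for $d_{tv}(f_n^\ast,f^\ast)$ you use the pointwise decomposition into the weighted $\ell_1$ deviation $W_n=\sum_k k\abs{f_n(k)-f(k)}$ plus a sample-mean fluctuation, reserving the moment inequality for i.i.d. sums (von Bahr--Esseen, which is the same tool as Lemma~\ref{lem:burkholder} at $p=1+\eta$) solely for $\bar{D}_n-\mu$, and taming $W_n$ by truncation and Cauchy--Schwarz against $\Exp{\mathscr{D}^{1+\eta}}<\infty$. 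I checked your exponent bookkeeping: the head/tail balances at $M\asymp n^{1/(2(1+\eta))}$ and $M\asymp n^{1/(2+\eta)}$ are right, the three constraints $\varepsilon\le\eta/(4(\eta+1))$, $\varepsilon\le\eta/(2(\eta+2))$, $\varepsilon\le\eta/(\eta+2)$ all contain $\eta/(4(\eta+2))$, and the residual events ($\bar{D}_n<\mu/2$, probability $\bigO{n^{-\eta}}$) are indeed $\bigO{n^{-\varepsilon}}$ since $\varepsilon<\eta$. The trade-off: the paper's approach is shorter and yields sharper exponents, but only modulo two imported results; yours is entirely self-contained, elementary, and proves the theorem on a slightly larger range of $\varepsilon$ than stated.
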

Similar to Theorem \ref{thm:convergence_degree_densities}, this result still holds if we consider the upper bound $K n^{-\varepsilon}$ in $\Omega_n$, for some $K > 0$.

With this result we can now state the following central limit theorem for $\Phi_n$ in the configuration model with regularly-varying degrees. 

\begin{theorem}\label{thm:annd_clt_configuration_model}
Let $\mathscr{D}$ be an integer-valued regular varying-random variable with exponent $1 < \gamma < 2$ and let
$\{G_n\}_{n \ge 1}$ be a sequence of graphs generated by CM with ${\bf D}_n = \emph{\texttt{IID}}(\mathscr{D})$. 
Assume $f(k)>0$ for all $k>0$. Then there exists a slowly varying function $l_0(n)$, such that
$\frac{\Phi_n(k)}{l_0(n) n^{\frac{2}{\gamma}-1}}$ converges in distribution to $S_{\gamma/2}$, having stable distribution with stability index $\frac{\gamma}{2}$. More precisely, for any $\tau < \frac{1}{\gamma+1}$ and any bounded Lipschitz function $g$,
\begin{equation*}
  \lim_{n\To \infty} \sup_{1\leq k \leq n^{\tau}} \abs{\Exp{g\left(\frac{\Phi_n(k)}{l_0(n) n^{\frac{2}{\gamma}-1}}\right) -g\left(S_{\gamma/2}\right)}}=0.
\end{equation*}
\end{theorem}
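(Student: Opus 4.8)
The plan is to read off the candidate limit from the exact form that \eqref{eq:def_Phi_n} takes in CM. Writing $N_k = nf_n(k)$ for the number of degree-$k$ nodes and $M_k = kN_k$ for the number of half-edges they carry, the identity $f_n^\ast(k)=M_k/L_n$ turns \eqref{eq:def_Phi_n} into $\Phi_n(k)=M_k^{-1}\sum_s D(s)$, where $s$ ranges over the $M_k$ half-edges incident to degree-$k$ nodes and $D(s)$ is the degree of the node reached along the uniform pairing. Conditionally on ${\bf D}_n$, the partners of these half-edges form a sample without replacement from the half-edge population (each node $i$ contributing $D_i$ half-edges, all of value $D_i$), so the conditional mean of $\Phi_n(k)$ equals, up to self-loop and within-$S_k$ corrections that I would show are of lower order, the population mean $W_n := L_n^{-1}\sum_{i=1}^n D_i^2$, which does not depend on $k$. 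This singles out $W_n$ as the object whose law I would match to $S_{\gamma/2}$.

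Next I would establish the stable limit for $W_n$ itself. Since $\mathscr{D}$ is regularly varying with exponent $\gamma$, \eqref{eq:def_regularly_varying} gives that $\mathscr{D}^2$ is regularly varying with exponent $\gamma/2\in(1/2,1)$, so $\Exp{\mathscr{D}^2}=\infty$ and the Stable Law CLT \eqref{eq:def_stable_law} applies to $\sum_i D_i^2$: there is a slowly varying $l_0$ with $n^{-2/\gamma}l_0(n)^{-1}\sum_i D_i^2\dlim S_{\gamma/2}$. Because $\gamma>1$ gives $L_n/n\to\Exp{\mathscr{D}}<\infty$, Slutsky's theorem yields $W_n/(n^{2/\gamma-1}l_0(n))\dlim S_{\gamma/2}$ after absorbing the constant $\Exp{\mathscr{D}}^{-1}$ and the scale parameter into $l_0$. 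This fixes the normalisation appearing in the statement.

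It remains to transfer this limit from $W_n$ to $\Phi_n(k)$, uniformly in $1\le k\le n^\tau$. Uniform applicability rests on Theorem \ref{thm:degree_sequence}: since $\tau<1/(\gamma+1)$ and $f(k)>0$ for every $k$, with high probability all of $1,\dots,\ceil{n^\tau}$ appear in ${\bf D}_n$, so that no $\Phi_n(k)$ in the range is set to zero and $M_k$ is, uniformly, a number of half-edges growing polynomially in $n$. The core estimate is that the matching error $\Phi_n(k)-W_n$ is negligible at scale $n^{2/\gamma-1}l_0(n)$ in the sense of weak convergence, uniformly in $k$. Conditionally on ${\bf D}_n$ I would bound the fluctuation of the sample mean of neighbor degrees; the dominant contributions come from the $O(1)$ nodes of degree of order $n^{1/\gamma}$, for which the number of half-edges of $S_k$ landing on a given large node is hypergeometric with a relative fluctuation I would control. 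Because the statement only tests against bounded Lipschitz $g$, contributions of vanishing probability may be discarded, which is precisely why the bounded-Lipschitz formulation (rather than, say, convergence of moments) is the right one.

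The hard part will be this last transfer for the largest $k$ in the range. As $k$ increases towards $n^\tau$ the number $M_k$ of sampling half-edges decreases, and the neighbor-degree sample no longer hits the heaviest nodes many times; in this sparse regime a second-moment/Chebyshev bound is too lossy, since the conditional variance of $\Phi_n(k)$, of order $(M_kL_n)^{-1}\sum_i D_i^3$, can exceed $W_n^2$. The passage from the sample mean to the stable law must therefore be made through the convergence of the point process of large neighbor degrees produced by the uniform matching, together with the Lipschitz test function to suppress the negligible-probability large deviations, rather than through pathwise concentration around $W_n$. Carrying this out uniformly in $k$ is the crux; the remaining ingredients are the Stable Law CLT and Theorem \ref{thm:degree_sequence}.
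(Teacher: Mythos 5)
Your first three paragraphs coincide with the paper's own reduction: the pivot $W_n = L_n^{-1}\sum_{i=1}^n D_i^2 = \sum_{\ell>0} f_n^\ast(\ell)\ell$, the stable-law CLT \eqref{eq:def_stable_law} applied to $\sum_i D_i^2$ combined with $L_n/n\to\nu_1$ on the event of Assumption~\ref{asmp:convergence_distribution_first_moment}, and the use of Theorem~\ref{thm:degree_sequence} together with boundedness of $g$ to discard the event that some $k\le n^\tau$ is missing; these are exactly the paper's terms $\Xi_n^{(1)}$, $\Xi_n^{(2)}$ and its event $\Lambda_{nk}=A_{nk}\cap B_n\cap\Omega_n$. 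The genuine gap is your final step: the transfer from $W_n$ to $\Phi_n(k)$, uniformly in $1\le k\le n^\tau$, \emph{is} the content of the theorem, and your proposal does not prove it. You assert it "must be made through the convergence of the point process of large neighbor degrees," but you formulate no such convergence statement, give no estimate, and explicitly defer the uniformity in $k$ as "the crux." A reduction whose decisive step is an unproven assertion is not a proof. The paper fills precisely this hole with a concrete bound: writing $\Phi_n(k)-W_n$ in terms of the variables $X_{ij}(k,\ell)$ of \eqref{eq:convergence_cm_Xij_def}, it applies Cauchy--Schwarz for each fixed $\ell$ and invokes Lemma~\ref{lem:variance_Xij_kl}, whose product-form bound $\Expn{(\sum_{i,j}X_{ij}(k,\ell))^2}\le C f_n^\ast(k)^2f_n^\ast(\ell)^2/L_n$ lets the factor $f_n^\ast(k)$ cancel the normalization and the $\ell$-sum collapse to $W_n$, giving $\Expn{\abs{\Phi_n(k)-W_n}}\le C\,W_n/\sqrt{L_n}$ uniformly in $k$; on $B_n$ this is $o\left(l_0(n)n^{2/\gamma-1}\right)$, and the Lipschitz property of $g$ finishes the proof. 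So the transfer is done by exactly the kind of second-moment concentration you declare impossible.

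That said, your stated reason for abandoning the concentration route is not frivolous, and it is in direct tension with the paper's key lemma. Your conditional-variance computation, of order $(M_kL_n)^{-1}\sum_i D_i^3$ with $M_k=knf_n(k)$, exceeds the square of the scale $n^{2/\gamma-1}$ precisely when $k\gg n^{1/\gamma^2}$, which is a nonempty part of the range $k\le n^\tau$ whenever $\gamma^2>\gamma+1$, i.e. $\gamma>(1+\sqrt{5})/2$. This extra variance originates in the index coincidences $i=s$, $j=t$ in the expansion of $\Expn{(\sum_{i,j}X_{ij}(k,\ell))^2}$: those diagonal terms contribute order $f_n^\ast(k)f_n^\ast(\ell)/L_n$ (the Poisson-level fluctuation of the edge count between the two degree classes), not $f_n^\ast(k)^2f_n^\ast(\ell)^2/L_n$, and they are exactly the cases the paper's proof of Lemma~\ref{lem:variance_Xij_kl} does not display (only Cases 1 and 2 are written out, the rest being claimed to "follow in a similar way"). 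For fixed $k$ this discrepancy is harmless, since then $f_n^\ast(k)$ is bounded away from zero, but for $k$ growing like $n^\tau$ it is precisely the issue that decides uniformity. In other words, your heuristic correctly locates the single estimate on which the theorem's uniformity in $k$ hinges; but locating it is not resolving it. To complete your proposal you would need either to prove a per-$(k,\ell)$ second-moment bound strong enough to survive the division by $f_n^\ast(k)$ and the sum over $\ell$ for all $k\le n^\tau$, or to actually construct the point-process argument (joint convergence of the largest partner degrees of the $M_k$ sampled half-edges) and deduce the stable limit from it uniformly in $k$. Neither is done, so the proposal stands as a correct set-up with the crucial step missing.
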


\begin{remark}
Although the ANND does not converge to a constant for CM with infinite variance degrees, it is worth noting that the distribution of the limit random variable $S_{\gamma/2}$ is independent of $k$. 
This reflects the independence between the degree of connected nodes in the CM.
\end{remark}

The proof of Theorem \ref{thm:annd_clt_configuration_model} is given in Section~\ref{sec:proofs_cm}. This result has two important consequences. First, it shows that, up to some slowly-varying function, $\Phi_n(k)$ scales as $n^{\frac{2}{\gamma}-1}$ and hence $\Phi_n(k) \to \infty$ with $n$. Second, since the rescaled limit is a random variable, even for large $n$ the value of the rescaled $\Phi_n(k)$ can be significantly different for different graphs generated by the configuration model. Therefore, we conclude that the ANND is not a proper measure for graphs where the degree distribution has infinite second moment.

\subsection{Repeated configuration model}

Recall that RCM repeatedly generates CM graphs until the resulting graph is simple. Let $S_n$ denote the event that the graph $G_n$, generated by CM with ${\bf D}_n=\texttt{IID}(\mathscr{D})$, is simple. Then $\Prob{S_n}$ converges to a positive number  whenever $\mathscr{D}$ has finite second moment (Theorem 7.12 in \cite{Hofstad2014}). Therefore, in the rest of this section we assume that $\mathbb{E}({\mathscr{D}}^{2+\eta})  <\infty$ for some $\eta>0$.
Since for all $k$ with $f(k)>0$ it holds that
\begin{equation*}
\CProb{\abs{\Phi_n(k)-\frac{\nu_2}{\nu_1}}>n^{-\delta}}{S_n} \leq \frac{ \mathbb{P}\left(\abs{\Phi_n(k)-\frac{\nu_2}{\nu_1}}>n^{-\delta}\right)}{\mathbb{P}(S_n)}\,,
\end{equation*}
the result of Theorem \ref{thm:convergence_annd_cm_strong} can be extended in a trivial way to the RCM. We state it here for completeness, where we exclude the speed of convergence since it depends on the convergence of $\Prob{S_n}$.

\begin{theorem}\label{thm:convergence_annd_rcm_strong}
Let $\mathscr{D}$ be an integer-valued random variable which satisfies $\Exp{\mathscr{D}^{2+\eta}} <
\infty$, for some $0 < \eta < 1$, and let $\{G_n\}_{n \ge 1}$ be a sequence of graphs generated by
the RCM with ${\bf D}_n=\emph{\texttt{IID}}(\mathscr{D})$. Then, for any $0 < \delta < \frac{\eta}{2(\eta+2)}$,
\[
	\lim_{n \to \infty} \sup_{k} \Prob{ \abs{\Phi_n(k) -\frac{\nu_2}{\nu_1}}1_{\{ f_n(k)>0 \}} > n^{-\delta}}
	= 0.
\]
\end{theorem}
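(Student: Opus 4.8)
The plan is to exploit the fact that the graph produced by the RCM has exactly the law of a CM graph conditioned on the event $S_n$ that the realization is simple. Write $A_k$ for the event $\{\abs{\Phi_n(k)-\frac{\nu_2}{\nu_1}}1_{\{f_n(k)>0\}}>n^{-\delta}\}$, and let all probabilities below refer to the CM measure. Under this identification the RCM probability of $A_k$ is the conditional probability $\CProb{A_k}{S_n}$, and the starting point is the elementary inequality already recorded above, namely $\CProb{A_k}{S_n}=\Prob{A_k\cap S_n}/\Prob{S_n}\le \Prob{A_k}/\Prob{S_n}$.

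First I would take the supremum over $k$ on both sides. Since $\Prob{S_n}$ does not depend on $k$, this yields
\[
\sup_k \CProb{A_k}{S_n} \le \frac{1}{\Prob{S_n}}\sup_k \Prob{A_k}.
\]
Next I would invoke Theorem \ref{thm:convergence_annd_cm_strong}, which gives $\sup_k \Prob{A_k}=O\!\left(n^{-\delta/2}+n^{\delta-\frac{\eta}{2(\eta+2)}}\right)$; for $0<\delta<\frac{\eta}{2(\eta+2)}$ both exponents are strictly negative, so the CM supremum tends to $0$. Finally, because $\mathscr{D}$ has a finite $(2+\eta)$-moment, and in particular a finite second moment, Theorem~7.12 of \cite{Hofstad2014} guarantees that $\Prob{S_n}$ converges to a strictly positive constant $c$; hence $1/\Prob{S_n}$ stays bounded, tending to $1/c$. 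As the right-hand side above is then the product of a bounded factor and a null sequence, it vanishes, which is exactly the asserted limit.

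There is essentially no hard step: the whole argument is the conditioning inequality combined with the already-established uniform CM rate. The only point requiring care, and the sole place where the finite-variance hypothesis is genuinely used beyond the assumptions of Theorem \ref{thm:convergence_annd_cm_strong}, is the non-degeneracy of $\lim_n \Prob{S_n}$. If $\Prob{S_n}$ were allowed to vanish, as happens when the second moment is infinite, then dividing by it could destroy the convergence; this is precisely why the statement is restricted to finite-variance degrees, and why the speed of convergence is omitted here, since it would be governed by the unquantified rate at which $\Prob{S_n}\to c$.
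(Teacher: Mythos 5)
Your proposal is correct and follows essentially the same route as the paper: identifying the RCM law with the CM law conditioned on the simplicity event $S_n$, applying the conditioning inequality $\CProb{A_k}{S_n} \le \Prob{A_k}/\Prob{S_n}$, invoking the uniform rate from Theorem \ref{thm:convergence_annd_cm_strong}, and using that $\Prob{S_n}$ tends to a positive constant under the finite-second-moment hypothesis. Your additional remarks on why the rate is omitted and why finite variance is essential match the paper's own commentary.
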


\subsection{Erased configuration model}

Unlike the RCM, the ECM is well-defined and has the correct limiting degree distribution when degrees have only finite
expectation. Therefore, the ECM is used for generating simple graphs with infinite variance of the degrees. Moreover, due to computational advantages of the ECM over the RCM, the ECM is also preferred when the degree distribution has finite variance. In this section we consider both finite and infinite variance scenario.

Because of the removal of edges, the eventual degree sequence in the ECM is, in general, different from ${\bf D}_n = \texttt{IID}(\mathscr{D})$. We will use symbols with hats to denote characteristics of the ECM. For instance, $\widehat{D}_i$ denotes the degree of node $i$ in the ECM, while $D_i$ denotes its degree before the removal of edges.  Similarly, $\widehat{\Phi}_n(k)$ is the ANND in the ECM, while ${\Phi}_n(k)$ is computed on the multi-graph CM.

Our main result in this section establishes the scaling of the error between $\Phi_n$ and $\widehat{\Phi}_n$.
The proof is provided in Section~\ref{sec:proof_ecm}.

\begin{theorem}\label{thm:annd_erased_model_error_term}
Let $\mathscr{D}$ be regularly varying with exponent $\gamma >1$
and let $\{G_n\}_{n \ge 1}$ be a sequence of graphs generated by
the ECM with ${\bf D}_n=\emph{\texttt{IID}}(\mathscr{D})$. Set $a = (\gamma - 1)^2/(2\gamma) > 0$ and let $k$ be such that
$f^\ast(k) > 0$. If $\gamma\in (1,2)$, then
\[
	\lim_{n \to \infty} \Prob{\left|\widehat{\Phi}_n(k) - \Phi_n(k)\right|
	> n^{\frac{2}{\gamma} - 1 - a}} = 0,
\]
while for $\gamma > 2$ it holds that
\[
	\lim_{n \to \infty} \Prob{\left|\widehat{\Phi}_n(k) - \Phi_n(k)\right|
	> n^{-a}} = 0.
\]
\end{theorem}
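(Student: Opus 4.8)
The plan is to realise the ECM as the erasure of a single configuration model graph $G_n$ with ${\bf D}_n=\texttt{IID}(\mathscr{D})$, so that $\Phi_n(k)$ and $\widehat{\Phi}_n(k)$ are computed on the same pairing and $\widehat{D}_j\le D_j$ for every $j$. Writing $N_n(k)=\sum_{i\to j}\ind{D_i=k}D_j$ and $M_n(k)=knf_n(k)$, definition \eqref{eq:def_Phi_n} reads $\Phi_n(k)=N_n(k)/M_n(k)$ on $\{f_n(k)>0\}$ (the factor $1/L_n$ cancels), and with the analogous sums $\widehat{N}_n(k),\widehat{M}_n(k)$ over the erased graph,
\begin{equation*}
\widehat{\Phi}_n(k)-\Phi_n(k)=\frac{\widehat{N}_n(k)-N_n(k)}{\widehat{M}_n(k)}-\Phi_n(k)\,\frac{\widehat{M}_n(k)-M_n(k)}{\widehat{M}_n(k)}.
\end{equation*}
Since $k$ is fixed and $f^\ast(k)>0$, and the ECM retains the correct limiting degree distribution in the finite-mean regime, $\widehat{M}_n(k)$ is of order $n$ with high probability, so everything reduces to bounding the two differences $\widehat{N}_n(k)-N_n(k)$ and $\widehat{M}_n(k)-M_n(k)$.

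For the denominator I would argue that a node of degree $k$ can only leave level $k$ by carrying a self-loop or a multiple edge, which for fixed $k$ has conditional probability of order $\sum_i D_i^2/L_n^2\to 0$, while a node can enter level $k$ only by shedding a prescribed number of half-edges. Bounding the expected number of such nodes shows that $|\widehat{M}_n(k)-M_n(k)|$, and hence $\Phi_n(k)(\widehat{M}_n(k)-M_n(k))/\widehat{M}_n(k)$, is of strictly smaller order than the stated threshold; the same estimate lets me replace the index set $\{\widehat{D}_i=k\}$ by $\{D_i=k\}$ inside $\widehat{N}_n(k)$ at negligible cost. The surviving, dominant contribution is the reduction of the neighbour degrees, $N_n(k)-\widehat{N}_n(k)\approx\sum_{i\to j}\ind{D_i=k}(D_j-\widehat{D}_j)$.

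Setting $R_j=D_j-\widehat{D}_j$, I would reorganise this dominant term by the neighbour $j$ as $\sum_j R_j\cdot\#\{\text{surviving edges from }j\text{ to level-}k\text{ nodes}\}$, whose conditional mean given ${\bf D}_n$ is $\approx f^\ast(k)\sum_j D_j\,\Exp{R_j\mid{\bf D}_n}$. The heart of the proof is the estimate of $\Exp{R_j\mid{\bf D}_n}$: approximating each multiplicity $G_{jj'}$ by a Poisson variable with mean $D_jD_{j'}/L_n$, self-loops contribute $\approx D_j^2/L_n$ and multiple edges contribute $\sum_{j'}\Exp{(G_{jj'}-1)^+}$. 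When all these means are below $1$ the multiple-edge term is the naive $\approx D_j^2\left(\sum_i D_i^2\right)/L_n^2$, but for the largest degrees $D_j\gtrsim n^{1-1/\gamma}$ (which occur only when $\gamma<2$) the means exceed $1$ and the count saturates to the honest order $D_j^{\gamma}n^{1-\gamma}$ rather than growing quadratically. Inserting the regularly-varying tail \eqref{eq:def_regularly_varying} and summing, then dividing by the order-$n$ denominator, yields a conditional expected error of order $n^{2/\gamma-1-2a}$ when $1<\gamma<2$ — carried by the handful of maximal-degree neighbours — and a strictly negative power of $n$ when $\gamma>2$, where no saturation occurs and $R_j\approx D_j^2\left(\sum_i D_i^2\right)/L_n^2$ throughout.

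Finally I would upgrade this conditional mean to the stated bound by Markov's inequality on the good event $\mathcal{E}_n=\{\max_i D_i\le n^{1/\gamma+o(1)}\}$ intersected with the event that the empirical moments $\sum_i D_i^p$ match their regularly-varying orders; by Theorem \ref{thm:degree_sequence} and Potter's bounds $\Prob{\mathcal{E}_n}=1-o(1)$. On $\mathcal{E}_n$ the threshold exceeds the conditional mean by a factor $n^{a}$ in the infinite-variance case, so Markov returns a probability $\le n^{-a}\to0$, and by an even larger margin when $\gamma>2$; the exponent $a=(\gamma-1)^2/(2\gamma)$ is exactly the slack needed to swallow the slowly-varying functions and the $n^{o(1)}$ from the maximal degree. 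The main obstacle is precisely the sharp estimate of $\Exp{R_j\mid{\bf D}_n}$ for the very largest degrees: the naive second-moment bound overcounts, since a node cannot lose more half-edges than it owns and the multiple-edge counts level off once the Poisson means pass $1$. Capturing the honest order $D_j^{\gamma}n^{1-\gamma}$ in this regime is the structural (finite-size) effect itself, and it is what turns the useless naive exponent into $2/\gamma-1-a$. Because the error is dominated by a few heavy-tailed maximal-degree neighbours, variance-based concentration is unavailable, and one must pair the sharp mean estimate with the deterministic control of the degree sequence on $\mathcal{E}_n$.
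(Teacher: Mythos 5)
For the infinite--variance case $1<\gamma<2$ your plan is correct and, structurally, it is the paper's proof: the same coupling of ECM and CM on one pairing, the same decomposition (your two terms, once the numerator difference is split into indicator--swap, degree--reduction and erased--edge pieces, are exactly the paper's four terms \eqref{eq:phi_bound_numerator}--\eqref{eq:phi_bound_edges}), and the same endgame of Markov's inequality on a good event controlling $\max_i D_i$ and the empirical distributions. The genuine difference is the central estimate. The paper never needs per--node control of erased stubs: it bounds the dominant term crudely by $\max_j D_j\cdot E_n \le n^{1/\gamma+a/4}E_n$ and imports the scaling $\Expn{E_n}\le n^{2-\gamma+a/4}$ (w.h.p.) from \cite[Theorem 8.13]{Hoorn2016b} -- the saturation phenomenon you analyse by hand is already encapsulated in that cited bound. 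You instead re-derive it from scratch in the sharper, degree--weighted form $\sum_j D_j\Exp{Y_j\mid {\bf D}_n}$, and your exponent arithmetic checks out: $\Exp{Y_j\mid {\bf D}_n}\asymp n^{1-\gamma}D_j^{\gamma}$ in the saturated regime, the sum is carried by the maximal degrees, and the conditional mean is $n^{2/\gamma-1-2a+o(1)}$ against the paper's cruder $n^{2/\gamma-1-3a/2}$; either beats the threshold $n^{2/\gamma-1-a}$. Two remarks on making your version rigorous: you do not need Poisson approximation, since $(G_{jj'}-1)^+\le\min\bigl\{G_{jj'},\tbinom{G_{jj'}}{2}\bigr\}$ together with the exact pairing moments $\Expn{G_{jj'}}\le D_jD_{j'}/(L_n-1)$ and $\Expn{\tbinom{G_{jj'}}{2}}\le D_j^2D_{j'}^2/\bigl((L_n-1)(L_n-3)\bigr)$ gives precisely the saturated bound; and you do not need the independence-style factorization of $Y_j$ against the number of surviving edges into level $k$, because that count is deterministically at most $D_j$. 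What the two routes buy: the paper's is short but leans on an external result; yours is self-contained and sharper, at the cost of concentration lemmas for empirical truncated moments.

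The genuine gap is the case $\gamma>2$, which you dismiss with ``by an even larger margin''. The margin in fact shrinks with $\gamma$. Your own estimates give a conditional mean of order $n^{3/\gamma-2+o(1)}$ for $2<\gamma<3$ (where $\sum_j D_j^3\approx n^{3/\gamma}$) and of order $n^{-1+o(1)}$ for $\gamma\ge 3$, while the target exponent $a=(\gamma-1)^2/(2\gamma)$ increases without bound; Markov's inequality then requires $a<2-3/\gamma$, respectively $a<1$, and both fail as soon as $\gamma\ge 2+\sqrt{3}$. Beyond that point no repair is possible, because the claimed rate itself cannot hold: for every $\gamma>2$ the number of self-loops is $O_\pr(1)$ but nonzero with probability bounded away from zero, so with non-vanishing probability the CM contains a single self-loop at a node $v$ of degree at least $3$ that has a neighbour of degree $k$, and erasing that one self-loop changes $\widehat{\Phi}_n(k)$ by twice the number of edges between $v$ and level-$k$ nodes divided by $knf_n(k)$, an amount of order $1/n\gg n^{-a}$ once $a>1$. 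So the second statement can only be established (by your method or any other) for $2<\gamma<2+\sqrt{3}$, or with the threshold weakened to $n^{-\min\{a,1\}+\epsilon}$; in any case it must be argued explicitly, not by claiming a larger margin. For what it is worth, the paper's own written proof carries out the events and the final Markov computation only for $1<\gamma<2$ as well, so this defect is inherited from, rather than resolved by, the original.
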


The first result states that the difference $\left|\widehat{\Phi}_n(k)-\Phi_n(k)\right|$, although not necessarily vanishing, is of the smaller order than
$\Phi_n(k)$ in Theorem \ref{thm:annd_clt_configuration_model}. Therefore, we can extend the latter theorem to the ECM as follows.

\begin{theorem}
Let $\mathscr{D}$ be regularly varying with exponent $1 < \gamma < 2$ and let $\{G_n\}_{n \ge 1}$ be a sequence of graphs generated by
the ECM with ${\bf D}_n=\emph{\texttt{IID}}(\mathscr{D})$. Then there exists a slowly-varying function $l_0(n)$ and random variable $S_{\gamma/2}$ 
having a stable distribution with shape parameter $\frac{\gamma}{2}$, such that for each $k$ with $f^\ast(k)>0$,
\begin{equation*}
  \frac{\widehat{\Phi}_n(k)}{l_0(n)n^{\frac{2}{\gamma}-1}} \dlim S_{\gamma/2} \quad \text{as } n \to \infty.
\end{equation*}
\end{theorem}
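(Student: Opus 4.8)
The plan is to reduce the statement to the two results already established for the configuration model: the central limit theorem of Theorem~\ref{thm:annd_clt_configuration_model} and the error estimate of Theorem~\ref{thm:annd_erased_model_error_term}. The essential structural fact is that the ECM is obtained from the underlying CM graph by erasing self-loops and collapsing multiple edges, so that $\widehat{\Phi}_n(k)$ and $\Phi_n(k)$ live on the \emph{same} probability space. This coupling justifies the exact identity
\begin{equation*}
  \frac{\widehat{\Phi}_n(k)}{l_0(n) n^{\frac{2}{\gamma}-1}}
  = \frac{\Phi_n(k)}{l_0(n) n^{\frac{2}{\gamma}-1}}
  + \frac{\widehat{\Phi}_n(k) - \Phi_n(k)}{l_0(n) n^{\frac{2}{\gamma}-1}},
\end{equation*}
with $l_0$ taken to be the same slowly-varying function produced by Theorem~\ref{thm:annd_clt_configuration_model}.

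First I would treat the leading summand. Since $k$ is fixed and $n^{\tau} \to \infty$, for all sufficiently large $n$ we have $1 \le k \le n^{\tau}$, so Theorem~\ref{thm:annd_clt_configuration_model} applies to the fixed $k$ and yields
\begin{equation*}
  \frac{\Phi_n(k)}{l_0(n) n^{\frac{2}{\gamma}-1}} \dlim S_{\gamma/2}.
\end{equation*}

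Next I would show that the remaining summand is negligible. Setting $a = (\gamma-1)^2/(2\gamma) > 0$ as in Theorem~\ref{thm:annd_erased_model_error_term}, that theorem guarantees that the event $\left\{ \left|\widehat{\Phi}_n(k) - \Phi_n(k)\right| \le n^{\frac{2}{\gamma}-1-a} \right\}$ has probability tending to one. On this event the second summand is at most $n^{-a}/l_0(n)$ in absolute value. Because $1/l_0$ is again slowly varying and $a > 0$, Potter's bounds give $n^{-a}/l_0(n) \to 0$, so the second summand converges to zero in probability. Combining the two displays via Slutsky's theorem then produces the claimed convergence $\widehat{\Phi}_n(k)/\left(l_0(n) n^{\frac{2}{\gamma}-1}\right) \dlim S_{\gamma/2}$.

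The argument carries no genuine obstacle once Theorems~\ref{thm:annd_clt_configuration_model} and~\ref{thm:annd_erased_model_error_term} are in hand; the only points deserving care are the verification that the CM--ECM coupling legitimises the additive split above, and the elementary but essential slowly-varying asymptotics $n^{-a}/l_0(n) \to 0$, which is precisely what lets the polynomially-small error of Theorem~\ref{thm:annd_erased_model_error_term} dominate the slowly-varying correction hidden in the normalisation $l_0(n) n^{\frac{2}{\gamma}-1}$.
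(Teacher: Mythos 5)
Your proposal is correct and takes essentially the same route as the paper: the paper obtains this theorem directly by combining the CM central limit theorem (Theorem~\ref{thm:annd_clt_configuration_model}) with the error bound of Theorem~\ref{thm:annd_erased_model_error_term}, observing that $\left|\widehat{\Phi}_n(k)-\Phi_n(k)\right|$ is of smaller order than the normalisation $l_0(n)n^{\frac{2}{\gamma}-1}$. The details you spell out---the CM--ECM coupling legitimising the additive split, Potter's bounds giving $n^{-a}/l_0(n)\to 0$, and Slutsky's theorem---are precisely the steps the paper leaves implicit.
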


The second part of Theorem \ref{thm:annd_erased_model_error_term} states that the difference $|\widehat{\Phi}_n(k) - \Phi_n(k)|$ goes to zero as $n \to \infty$, thus, using \eqref{eq:convergence_annd_cm_weak}, we obtain the following result.

\begin{theorem}\label{thm:convergence_annd_ecm_weak}
Let $\mathscr{D}$ be regularly varying with exponent $\gamma > 2$ and let $\{G_n\}_{n \ge 1}$ be a sequence of graphs generated by
ECM with ${\bf D}_n=\emph{\texttt{IID}}(\mathscr{D})$. Then for any $k$ such that $f^\ast(k) > 0$ and any
$0 < \delta < \frac{\gamma - 1}{2(\gamma + 3)}$, we have
\[
\lim_{n \to \infty} \Prob{\abs{\widehat{\Phi}_n(k) -\frac{\nu_2}{\nu_1}} > n^{-\delta}} = 0.
\]
\end{theorem}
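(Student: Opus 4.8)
The plan is to treat this statement as a corollary of two facts already in hand: the second part of Theorem~\ref{thm:annd_erased_model_error_term}, which controls the discrepancy between the erased and the multigraph ANND when $\gamma > 2$, and the weak convergence \eqref{eq:convergence_annd_cm_weak} of the multigraph quantity $\Phi_n(k)$ to $\nu_2/\nu_1$. All that is then needed is a triangle inequality together with careful bookkeeping of the two convergence rates. I would fix $k$ with $f^\ast(k)>0$ (equivalently $f(k)>0$, since $f^\ast(k)=kf(k)/\nu_1$) and start from
\[
\abs{\widehat{\Phi}_n(k) - \frac{\nu_2}{\nu_1}} \le \abs{\widehat{\Phi}_n(k) - \Phi_n(k)} + \abs{\Phi_n(k) - \frac{\nu_2}{\nu_1}}.
\]

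Set $a = (\gamma-1)^2/(2\gamma)$. Because $\mathscr{D}$ is regularly varying with $\gamma>2$, it has a finite $(2+\eta)$-moment for every $\eta<\gamma-2$, so \eqref{eq:convergence_annd_cm_weak} applies and supplies an exponent $\delta'>0$ with $\Prob{\abs{\Phi_n(k) - \nu_2/\nu_1} > n^{-\delta'}} \to 0$. I would then introduce the good events $A_n = \{\abs{\widehat{\Phi}_n(k) - \Phi_n(k)} \le n^{-a}\}$ and $B_n = \{\abs{\Phi_n(k) - \nu_2/\nu_1} \le n^{-\delta'}\}$; the second part of Theorem~\ref{thm:annd_erased_model_error_term} gives $\Prob{A_n^c} \to 0$, and the display above gives $\Prob{B_n^c} \to 0$. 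On $A_n \cap B_n$ we have $\abs{\widehat{\Phi}_n(k) - \nu_2/\nu_1} \le n^{-a} + n^{-\delta'}$, so whenever $\delta < \min\{a,\delta'\}$ both $n^{-a}$ and $n^{-\delta'}$ are $o(n^{-\delta})$ and hence $n^{-a} + n^{-\delta'} \le n^{-\delta}$ for all large $n$. Consequently $\{\abs{\widehat{\Phi}_n(k) - \nu_2/\nu_1} > n^{-\delta}\} \subseteq A_n^c \cup B_n^c$ eventually, and the union bound $\Prob{A_n^c} + \Prob{B_n^c} \to 0$ closes the argument.

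To recover the explicit threshold $\frac{\gamma-1}{2(\gamma+3)}$ stated in the theorem, I would check two things. First, that the erased-model correction is negligible, i.e.\ $\frac{\gamma-1}{2(\gamma+3)} < a = \frac{(\gamma-1)^2}{2\gamma}$; after dividing by $\frac{\gamma-1}{2}>0$ this reduces to $\gamma < (\gamma-1)(\gamma+3)$, i.e.\ $\gamma^2+\gamma-3>0$, which holds for all $\gamma>2$. Second, that the multigraph exponent $\delta'$ coming from \eqref{eq:convergence_annd_cm_weak} can be taken at least as large as $\frac{\gamma-1}{2(\gamma+3)}$, which is done by optimizing the choice of $\eta<\gamma-2$ and tracking the constants in the underlying rate estimate. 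Since both ingredients are already established, there is no deep obstacle: the result is genuinely a corollary, and essentially all of its content is this rate bookkeeping. The one step that warrants real care — and the place where the finite-$(2+\eta)$-moment budget afforded by $\gamma>2$ is actually spent — is confirming that the convergence of $\Phi_n(k)$, which is driven by the fluctuations of the empirical second moment $\tfrac1n\sum_i D_i^2$, is fast enough to furnish a $\delta'$ above the claimed threshold; this is the most delicate part of the verification, as it is sensitive to how sharply one estimates that convergence.
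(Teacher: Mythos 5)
Your skeleton is exactly the paper's: the same triangle-inequality split of $\abs{\widehat{\Phi}_n(k)-\nu_2/\nu_1}$, the second half of Theorem~\ref{thm:annd_erased_model_error_term} for the ECM--CM discrepancy, and the CM convergence \eqref{eq:convergence_annd_cm_weak} for the remaining term, assembled by a union bound over good events. Your explicit check that $\frac{\gamma-1}{2(\gamma+3)} < a = \frac{(\gamma-1)^2}{2\gamma}$ for $\gamma>2$ is correct, and it is a step the paper leaves implicit.

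The genuine gap is precisely the step you deferred. You assert that the CM rate $\delta'$ can be pushed above $\frac{\gamma-1}{2(\gamma+3)}$ ``by optimizing the choice of $\eta<\gamma-2$ and tracking the constants,'' but this cannot be done with the results in hand. The rate supplied by Theorem~\ref{thm:convergence_annd_cm_strong} is $\delta' < \frac{\eta}{2(\eta+2)}$ subject to \emph{both} $\Exp{\mathscr{D}^{2+\eta}}<\infty$ (so $\eta<\gamma-2$) and $0<\eta<1$ (the restriction needed for the Burkholder--H\"{o}lder machinery of Lemma~\ref{lem:burkholder}). Since $\eta\mapsto\frac{\eta}{2(\eta+2)}$ is increasing, the supremum of achievable rates is $\min\bigl\{\frac{\gamma-2}{2\gamma},\frac{1}{6}\bigr\}$, which lies strictly below $\frac{\gamma-1}{2(\gamma+3)}$ for every $\gamma>2$ other than the single point $\gamma=3$: for $2<\gamma<3$ one has $(\gamma-2)(\gamma+3)-\gamma(\gamma-1)=2\gamma-6<0$, and for $\gamma>3$ one has $\frac{\gamma-1}{2(\gamma+3)}>\frac{1}{6}$. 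So your argument proves the statement only for $\delta$ below this smaller threshold, not on the full stated range. You should know, however, that this defect is inherited from the paper itself: its proof takes $\eta=(\gamma-2)/2$, which yields $\delta<\frac{\gamma-2}{2(\gamma+2)}$, again strictly smaller than the stated constant; the constant $\frac{\gamma-1}{2(\gamma+3)}$ is what one obtains by substituting $\eta=(\gamma-1)/2$ into $\frac{\eta}{2(\eta+2)}$, a choice compatible with a $(1+\eta)$-moment budget (as used elsewhere in Section~\ref{sec:convergence_annd_cm}) but never with the $(2+\eta)$-moment and $\eta<1$ requirements of Theorem~\ref{thm:convergence_annd_cm_strong}. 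Your instinct that this was the delicate point was right; the honest conclusion is that the bookkeeping does not close, and the theorem's stated threshold appears to be an error of the paper rather than something recoverable by sharper optimization.
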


The proof of Theorem~\ref{thm:convergence_annd_ecm_weak} follows directly by splitting
\[
	\abs{\widehat{\Phi}_n(k) -\frac{\nu_2}{\nu_1}} \le \abs{\Phi_n(k) -\frac{\nu_2}{\nu_1}}
	+ \left|\widehat{\Phi}_n(k) - \Phi_n(k)\right|
\]
and then using Theorem \ref{thm:convergence_distribution_first_moment} and Theorem \ref{thm:annd_erased_model_error_term}. Observe that when $\gamma > 2$, we have $\eta := (\gamma - 2)/2 > 0$ and  $\Exp{\mathscr{D}^{2 + \eta}} < \infty$, which yields the bound for $\delta$ in the above theorem.

Interestingly, in the literature, it has been observed that the ANND in the ECM decreases for large $k$. This phenomenon, termed as `structural correlations' or `finite-size effects' has been ascribed to the fact that the graph is simple, thus larger nodes do not have sufficient number of large neighbors~\cite{Barabasi2016,Catanzaro2005}. Theorem~\ref{thm:annd_erased_model_error_term} says that asymptotically the difference between ANND in CM and ECM is vanishing for any {\it fixed} $k$. However, when $k$ scales as a positive power of $n$, the difference between ANND in CM and ECM might be indeed non-vanishing because of the more significant contribution of self-loops and multiple edges. In numerical experiments we observed that there is indeed a difference between the range of ANND in CM and ECM. However, the numerical results showed enormous fluctuations even in the case of finite variance of the degrees, see e.g. Figure~\ref{fig:annd_cm_corrected}. Therefore, the numerical comparison of CM and ECM was inconclusive and is omitted in this paper. Exact evaluation of the finite-size effects remains an interesting open question.

\section{Average nearest neighbor rank}\label{sec:annr}
Many real networks exhibit power law degree distributions with infinite variance. As we saw above, the ANND functional in  CM with ${\bf D}_n=\texttt{IID}(\mathscr{D})$ degree sequences has the obvious disadvantage that it scales as the network size becomes larger, which makes it difficult to compare networks that have similar structure but different sizes. Also, in the infinite variance case, the scaled limit is a proper  random variable, which can vary as the degree sample changes.

Therefore, in this section we introduce a new measure that is suitable in the infinite variance case. A classical approach is to use rank-based measures. Thus we introduce the average nearest neighbor rank (ANNR), denoted by $\Theta_n(k)$, which is defined as follows:
\begin{equation*}
  \Theta_n(k)=\ind{f_n(k)>0}\frac{\sum_{\ell > 0} h_n(k,\ell)F_n^\ast(\ell)}{f_n^\ast(k)}\,.
\end{equation*}

The difference with ${\Phi}_n(k)$ is that we replace $\ell$ in the summation on the right-hand side of \eqref{eq:def_Phi_n} by $F_n^\ast(\ell)$. Recall that $F_n^\ast$ is the cumulative size-biased degree distribution, i.e.,
\[
	F_n^\ast(\ell) = \frac{1}{L_n} \sum_{i = 1}^n D_i \ind{D_i \le \ell}.
\] 
Hence, the value $F_n^\ast(\ell)$ can be seen as assigning a rank to edges involving a node of degree $\ell$. For instance, for the largest degree $D_{\max}$, $F_n^\ast(D_{\max}) = 1$, while for the smallest degree $D_{\min}$, $F_n^\ast(D_{\min})$ is the fraction of edges in the network with one of the nodes having degree $D_{\min}$. Recall that the total fraction of edges with one node having degree $k$ equals $f_n^\ast(k)$. Therefore, recalling that $P(\ell|k) = h_n(k,\ell)/f_n^\ast(k)$ is the probability that a node of degree $k$ is connected to a node of degree $\ell$, the function $\Theta_n(k)$ computes that average value of $F_n^\ast$ (rank) over all neighbors of nodes with degree $k$.

Note that $F_n^\ast(\ell)$ is a {\it non-decreasing} function of the degree, that is the degrees are ranked from low to high. Then, just as ${\Phi}_n(k)$, the ANNR $\Theta_n(k)$ tends to be increasing for assortative networks, and decreasing for disassortative networks. When the wiring is neutral, such as in the configuration model, the ANNR should be a constant, the value of which we will derive below.

Note that due to the straightforward equality $\sum_{\ell > 0} h_n(k,\ell)=f_n^*(k)$, the value of $\Theta_n(k)$ always lies in the interval $[0,1]$. This solves the scaling problem.
Further, the results in this section require only that $\mathscr{D}$ has finite ($1+\eta$)-moment.

Similarly to the ANND, we define the limiting version of the ANNR:
\begin{equation}
	\Theta(k) = \ind{f(k) > 0}\frac{\sum_{\ell > 0} h(k,\ell)F^\ast(\ell)}{f^\ast(k)}.
\end{equation}
The next theorem establishes convergence of $\Theta_n(k)$ to $\Theta(k)$. See Section \ref{ssec:proofs_annr} for the proof.


\begin{theorem}[General convergence of ANNR]\label{thm:annr_convergence_general}
Let $\{G_n\}_{n \ge 1}$ be any sequence of graphs of size $n$ that satisfies Assumptions \ref{asmp:regularity_structure} and \ref{asmp:convergence_distribution_first_moment}.
Then, for each fixed $k$ such that $f(k)>0$ and each $0 < \delta <
\min\{\frac{\eta}{8+4\eta},\kappa \}$,
\begin{equation*}
\lim_{n\To\infty}\Prob{\abs{\Theta_n(k)-\Theta(k)} > n^{-\delta}}=0.
\end{equation*}
\end{theorem}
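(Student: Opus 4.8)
The plan is to carry out the entire argument on the high-probability event $\Omega_n \cap \Gamma_n$, with $\Omega_n$ from Assumption~\ref{asmp:convergence_distribution_first_moment} and $\Gamma_n$ from Assumption~\ref{asmp:regularity_structure}. Since $\Prob{(\Omega_n \cap \Gamma_n)^c} \le \Prob{\Omega_n^c} + \Prob{\Gamma_n^c} = O(n^{-\alpha}) + o(1) \to 0$, it suffices to show $\abs{\Theta_n(k) - \Theta(k)} \le n^{-\delta}$ on this event for all large $n$. The feature that makes the ANNR tractable, in contrast with the ANND, is that the rank $F_n^\ast(\ell)$ is bounded by $1$; this is exactly what lets one avoid any tail truncation in $\ell$ and work with the total-variation distance for the size-biased law, whose mean may be infinite.

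Because $f(k)>0$, the indicator in $\Theta(k)$ equals $1$; and on $\Omega_n$ the estimate $\abs{f_n^\ast(k) - f^\ast(k)} \le 2 d_{tv}(f_n^\ast, f^\ast) \le 2n^{-\varepsilon}$ together with $f^\ast(k)>0$ forces $f_n^\ast(k)>0$, hence $f_n(k)>0$, for all large $n$, so the indicator in $\Theta_n(k)$ equals $1$ as well. Writing $N_n = \sum_{\ell>0} h_n(k,\ell)F_n^\ast(\ell)$ and $N = \sum_{\ell>0} h(k,\ell)F^\ast(\ell)$, I would use the quotient identity
\[
\Theta_n(k) - \Theta(k) = \frac{N_n - N}{f_n^\ast(k)} + \frac{N\,\bigl(f^\ast(k) - f_n^\ast(k)\bigr)}{f_n^\ast(k)\,f^\ast(k)}
\]
and estimate the two pieces separately.

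For the numerator, split $N_n - N = \sum_\ell h_n(k,\ell)\bigl(F_n^\ast(\ell) - F^\ast(\ell)\bigr) + \sum_\ell \bigl(h_n(k,\ell) - h(k,\ell)\bigr)F^\ast(\ell)$. The first sum is at most $\sup_\ell \abs{F_n^\ast(\ell) - F^\ast(\ell)} \cdot \sum_\ell h_n(k,\ell) \le 2n^{-\varepsilon} f_n^\ast(k)$, where the uniform bound is \eqref{eq:sup_bound_total_variantion} applied to $d_{tv}(f_n^\ast, f^\ast)$. The second sum is at most $\sum_\ell \abs{h_n(k,\ell) - h(k,\ell)} \le n^{-\kappa}$, which holds on $\Gamma_n$ precisely because, $f_n(k)>0$ having been checked, the $k$-row is one of the nonnegative terms in the sum defining $\Gamma_n$. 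For the second piece I would use $f_n^\ast(k) \ge f^\ast(k)/2$ on $\Omega_n$ for large $n$, and $N \le \sum_\ell h(k,\ell) = f^\ast(k)$ since $F^\ast \le 1$. Combining everything gives $\abs{\Theta_n(k) - \Theta(k)} \le C(k)\,\bigl(n^{-\varepsilon} + n^{-\kappa}\bigr)$, with $C(k)$ depending only on $f^\ast(k)$.

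This bound is of order $n^{-\min\{\varepsilon,\kappa\}}$, so for any $\delta < \min\{\varepsilon,\kappa\}$ we obtain $C(k)(n^{-\varepsilon}+n^{-\kappa}) \le n^{-\delta}$ for all large $n$, which is the assertion. In the IID/CM setting the admissible $\varepsilon$ supplied by Theorem~\ref{thm:convergence_distribution_first_moment} is $\varepsilon = \eta/(4(\eta+2)) = \eta/(8+4\eta)$, exactly the first entry of the stated range $\min\{\eta/(8+4\eta),\kappa\}$. The only points requiring care are (i) insisting on the total-variation distance for $f^\ast$, so that $\sup_\ell\abs{F_n^\ast - F^\ast} \le 2 d_{tv}(f_n^\ast,f^\ast)$ stays valid despite the possibly infinite mean of $f^\ast$, and (ii) verifying $f_n(k)>0$ on $\Omega_n$ before invoking $\Gamma_n$ at the single index $k$; the remainder is a routine quotient estimate, and I do not expect a genuine obstacle.
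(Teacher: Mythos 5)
Your proposal is correct and follows essentially the same route as the paper's proof: work on the intersection of the good events from Assumptions~\ref{asmp:regularity_structure} and~\ref{asmp:convergence_distribution_first_moment}, perform a three-term quotient/cross-term decomposition (yours groups the terms with $1/f_n^\ast(k)$ where the paper uses $1/f^\ast(k)$, a purely cosmetic difference), bound the rank discrepancy via $\sup_\ell\abs{F_n^\ast(\ell)-F^\ast(\ell)} \le 2d_{tv}(f_n^\ast,f^\ast)$ from \eqref{eq:sup_bound_total_variantion}, bound the $h$-discrepancy by $n^{-\kappa}$ on $\Gamma_n$, and exploit $F_n^\ast, F^\ast \le 1$ to avoid any truncation in $\ell$. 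If anything, your explicit verification that $f_n(k)>0$ holds on $\Omega_n$ for large $n$ (so both indicators equal $1$) is slightly more careful than the paper, which carries the indicator $\ind{f_n(k)>0}$ through without addressing this point.
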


\begin{remark}
Since $|\Theta_n(k)|, \, |\Theta(k)| \le 1$, by dominated convergence we have that
\[
	\lim_{n \to \infty} \Exp{\abs{\Theta_n(k)-\Theta(k)}} = 0.
\]
\end{remark}

Comparing Theorem \ref{thm:annr_convergence_general} to Theorem \ref{thm:convergence_annd} we see that convergence of the ANNR holds for any random variable with finite $(1 + \eta)$-moment, instead of finite $(2 + \eta)$-moment of $\mathscr{D}$. This makes the ANNR a more suitable measure for degree-degree correlations than the ANND.

Now consider CM with ${\bf D}_n=\texttt{IID}(\mathscr{D})$. Then we have that
$h(k,\ell) = f^\ast(k) f^\ast(\ell)$ and hence we can write
\[
	\Theta(k) = \sum_{\ell > 0} f^\ast(\ell) F^\ast(\ell) = \Exp{F^\ast(\mathscr{D}^\ast)},
\]
where $\mathcal{D}^\ast$ has the cumulative distribution function $F^\ast$. Similarly to the ANND, we can directly apply Theorem \ref{thm:annr_convergence_general} to get that $\Theta_n(k)$ in the CM converges in probability to $\Exp{F^\ast(\mathscr{D}^\ast)}$, for all $k$ such that $f(k) > 0$. In addition, we can prove a stronger result for ANNR, which is similar to Theorem \ref{thm:convergence_annd_cm_strong}, with the exception of only needing a finite $(1 + \eta)$-moment.

\begin{theorem}[Convergence of ANNR in CM]\label{thm:annr_convergence_cm}
Let $\mathscr{D}$ be an integer-valued random variable which satisfies $\Exp{D^{1+\eta}} <
\infty$, for some $0 < \eta < 1$, and let $\{G_n\}_{n \ge 1}$ be a sequence of graphs generated by
CM with ${\bf D}_n=\emph{\texttt{IID}}(\mathscr{D})$. Then, for any $\delta < \frac{\eta}{8 + 4\eta}$, as $n \to \infty$,
\[
	\sup_{k \ge 0} \, \Prob{ \left|{\Theta_n(k) - \Exp{F^\ast(\mathscr{D}^\ast)}}\right|\ind{ f_n(k)>0}
    > n^{-\delta}}
	 = \bigO{n^{-\delta}}.
\]
\end{theorem}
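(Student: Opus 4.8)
The plan is to follow the strategy of Theorem \ref{thm:convergence_annd_cm_strong} but exploit that $\Theta_n(k)$ is bounded, which is what allows us to weaken the moment assumption from $(2+\eta)$ to $(1+\eta)$. The first step is to use the explicit construction of CM to obtain a tractable expression for $\Theta_n(k)$ \emph{conditioned on the degree sequence} ${\bf D}_n$. Conditionally on ${\bf D}_n$, for a fixed node $i$ of degree $k$, each of its $k$ stubs is paired with a uniformly chosen remaining stub, and the partner has degree $\ell$ with probability approximately $f_n^\ast(\ell)$ (up to the removal of the finitely many stubs of node $i$ itself, a correction of order $k/L_n$). Summing the rank $F_n^\ast$ of the partner degrees over the $k$ stubs and averaging over all nodes of degree $k$, the conditional expectation of $\Theta_n(k)$ given ${\bf D}_n$ is, up to a $\bigO{k/L_n}$ term, equal to $\sum_{\ell>0} f_n^\ast(\ell)F_n^\ast(\ell)=\Exp{F_n^\ast(\mathscr{D}_n^\ast)}$, where $\mathscr{D}_n^\ast$ has law $f_n^\ast$. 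I would isolate this as the ``annealed target'' and handle separately (a) the concentration of $\Theta_n(k)$ around its conditional mean, and (b) the convergence of $\Exp{F_n^\ast(\mathscr{D}_n^\ast)}$ to $\Exp{F^\ast(\mathscr{D}^\ast)}$.

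For step (b) I would write
\begin{align*}
\abs{\Exp{F_n^\ast(\mathscr{D}_n^\ast)}-\Exp{F^\ast(\mathscr{D}^\ast)}}
&\le \abs{\sum_{\ell>0}\left(f_n^\ast(\ell)-f^\ast(\ell)\right)F_n^\ast(\ell)}
+\abs{\sum_{\ell>0}f^\ast(\ell)\left(F_n^\ast(\ell)-F^\ast(\ell)\right)}.
\end{align*}
Since $0\le F_n^\ast,F^\ast\le 1$, the first sum is bounded by $2\,d_{tv}(f_n^\ast,f^\ast)$ and the second by $\sup_\ell\abs{F_n^\ast(\ell)-F^\ast(\ell)}\le 2\,d_{tv}(f_n^\ast,f^\ast)$ via \eqref{eq:sup_bound_total_variantion}. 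On the event $\Omega_n$ of Assumption \ref{asmp:convergence_distribution_first_moment} (which holds for CM by Theorem \ref{thm:convergence_distribution_first_moment} with $\varepsilon\le\frac{\eta}{4(\eta+2)}$), this is $\bigO{n^{-\varepsilon}}$, and $\Prob{\Omega_n^c}=\bigO{n^{-\varepsilon}}$. This is exactly the place where boundedness of the rank pays off: only the total variation distance of the size-biased distribution is needed, never its first moment, so finite $(1+\eta)$-moment suffices.

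For step (a), the concentration, I would condition on ${\bf D}_n$ and use that the pairing of stubs can be revealed sequentially, so that $\Theta_n(k)$ is a function of the matching on which a bounded martingale / Azuma-type or Chebyshev argument applies; each swap of two pairings changes the summand $\sum_{\ell>0}h_n(k,\ell)F_n^\ast(\ell)$ by $\bigO{1/L_n}$ because the ranks lie in $[0,1]$, giving a conditional variance of order $k^2/L_n$ for the unnormalized sum and hence, after dividing by $f_n^\ast(k)=k f_n(k)/L_n$, a fluctuation of $\Theta_n(k)$ of order $\bigO{(n f_n(k))^{-1/2}}$ on $\Omega_n$. A Chebyshev bound then controls $\Prob{\abs{\Theta_n(k)-\CExp{\Theta_n(k)}{{\bf D}_n}}>n^{-\delta}/2}$ uniformly in $k$, the uniformity coming from the fact that the denominators $f_n^\ast(k)$ only help (larger $k$ means more stubs and better averaging) while the numerator fluctuation is always $\bigO{1/L_n}$ per swap. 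The main obstacle is making this uniform-in-$k$ concentration rigorous, in particular controlling the rare small-$f_n(k)$ regime: for $k$ with very few representatives the martingale bound degrades, so I would need to treat separately the range of $k$ for which $n f_n(k)$ is small, using the indicator $\ind{f_n(k)>0}$ together with Theorem \ref{thm:degree_sequence} to argue that such problematic $k$ are either absent with high probability or contribute negligibly, and balancing the two error sources to land on the stated exponent $\delta<\frac{\eta}{8+4\eta}$. Combining (a) and (b) and optimizing the split between the concentration error $\bigO{n^{-\delta}}$ and the distributional error $\bigO{n^{-\varepsilon}}$ yields the claimed uniform bound $\bigO{n^{-\delta}}$.
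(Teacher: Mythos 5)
Your overall architecture and your step (b) do match the paper's proof: there too, $\abs{\Theta_n(k)-\Exp{F^\ast(\mathscr{D}^\ast)}}\ind{f_n(k)>0}$ is split into a structural term
$\Xi_n^{(1)}=\frac{\ind{f_n(k)>0}}{f_n^\ast(k)}\abs{\sum_{\ell>0}\left(h_n(k,\ell)-f_n^\ast(k)f_n^\ast(\ell)\right)F_n^\ast(\ell)}$
plus two distributional terms bounded by $2d_{tv}(f_n^\ast,f^\ast)$ and $\sup_{\ell>0}\abs{F_n^\ast(\ell)-F^\ast(\ell)}$ via \eqref{eq:sup_bound_total_variantion}, and these are controlled by Theorem \ref{thm:convergence_distribution_first_moment} exactly as you describe. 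The gap is in your step (a), which is the heart of the proof.

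The paper makes the structural term uniform in $k$ by conditioning on the degree sequence, cutting the sum over $\ell$ at $w_n=\floor{n^{2\delta/\eta}}$, and invoking Lemma \ref{lem:variance_Xij_kl}, whose bound $\Expn{(\sum_{i,j}X_{ij}(k,\ell))^2}\le C f_n^\ast(k)^2f_n^\ast(\ell)^2/L_n$ carries a factor $f_n^\ast(k)^2$ that, after Cauchy--Schwarz, exactly cancels the prefactor $1/f_n^\ast(k)$; the resulting estimate $\Expn{\Xi_n^{(1)}}\le CL_n^{-1/2}+\frac{2}{L_n}\sum_i D_i\ind{D_i>w_n}$ no longer involves $k$ at all, and Markov plus the $(1+\eta)$-moment yields $\bigO{n^{\delta-1/2}+n^{-\delta}}$ uniformly. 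Your martingale/Azuma route produces no such cancellation: your own fluctuation estimate degrades as $nf_n(k)$ shrinks, and this degradation is real rather than an artifact of the method --- if degree $k$ has a single representative, $\Theta_n(k)$ is an average of only $k$ ranks and genuinely fluctuates at scale $k^{-1/2}$. Your proposed repair, Theorem \ref{thm:degree_sequence}, cannot close this: first, that theorem assumes $\mathscr{D}$ is regularly varying with $f(k)>0$ for all $k$, while the present theorem assumes only $\Exp{\mathscr{D}^{1+\eta}}<\infty$; second, it separates degrees present w.h.p.\ from degrees absent w.h.p., whereas the dangerous regime is the intermediate one in which $\Prob{f_n(k)>0}$ is of constant order but only $O(1)$ nodes have degree $k$ --- there the indicator does not help and nothing is ``negligible''. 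Closing that regime requires a quantitative trade-off you never carry out: $\Prob{f_n(k)>0}\le nf(k)$ together with $f(k)\le\Exp{\mathscr{D}^{1+\eta}}k^{-(1+\eta)}$ forces such $k$ to be large, and one must then show that averaging over the $k$ stubs of each representative beats the threshold $n^{-\delta}$ at rate $\bigO{n^{-\delta}}$; this balancing is precisely where an exponent like $\eta/(8+4\eta)$ would have to emerge, and it is absent. (Your intermediate arithmetic is also inconsistent: the conditional variance of the numerator $\sum_{\ell>0}h_n(k,\ell)F_n^\ast(\ell)$ is of order $knf_n(k)/L_n^2$, not $k^2/L_n$, giving fluctuations of $\Theta_n(k)$ of order $(knf_n(k))^{-1/2}$ rather than $(nf_n(k))^{-1/2}$.) One mitigating remark: the difficulty you ran into is a genuine soft spot, since in the paper's own proof of Lemma \ref{lem:variance_Xij_kl} the diagonal case $i=s\ne j=t$ contributes terms of order $f_n^\ast(k)f_n^\ast(\ell)/L_n$ rather than $f_n^\ast(k)^2f_n^\ast(\ell)^2/L_n$, so the rare-degree regime is exactly where that lemma's stated bound is delicate; but flagging an obstacle and sketching an inapplicable tool is not a proof, so as it stands your step (a) is a hole.
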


To establish convergence of the ANNR in the Erased Configuration Model we follow a similar approach as for the ANND and study
the difference $\left|\widehat{\Theta}_n(k) - \Theta_n(k)\right|$. We will prove that this difference converges to zero,
in probability.

\begin{theorem}\label{thm:annr_erased_model_error_term}
Let $\mathscr{D}$ be regularly varying with exponent $\gamma > 1$ and let $\{G_n\}_{n \ge 1}$ be a sequence of graphs generated by
ECM with ${\bf D}_n=\emph{\texttt{IID}}(\mathscr{D})$. Then, for any $\delta > 0$ and $k$ such that $f^\ast(k) > 0$,
\[
	\lim_{n \to \infty} \Prob{\left|\widehat{\Theta}_n(k) - \Theta_n(k)\right|
	> \delta} = 0.
\]
\end{theorem}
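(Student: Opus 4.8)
The plan is to prove the slightly stronger fact that $\widehat{\Theta}_n(k)$ and $\Theta_n(k)$ are close by controlling their difference directly, exploiting that all the rank values $F_n^\ast(\cdot),\widehat{F}_n^\ast(\cdot)$ lie in $[0,1]$. Writing $V_k=\{i:D_i=k\}$ and $\widehat V_k=\{i:\widehat D_i=k\}$, the definitions give
\[
\Theta_n(k)=\frac{\sum_{i\to j}\ind{D_i=k}F_n^\ast(D_j)}{k\abs{V_k}},\qquad \widehat{\Theta}_n(k)=\frac{\sum_{i\widehat{\to} j}\ind{\widehat D_i=k}\widehat F_n^\ast(\widehat D_j)}{k\abs{\widehat V_k}},
\]
where $\widehat{\to}$ denotes ECM edges; thus each functional is the average over the edges leaving degree-$k$ nodes of the (size-biased) rank of the neighbor. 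The governing global quantity is the number of erased half-edges $R_n=\sum_i(D_i-\widehat D_i)=L_n-\widehat L_n$. I would first record, from the standard first-moment counts of self-loops and multiple edges in CM, that $\Exp{R_n\mid {\bf D}_n}$ is of order $\sum_i D_i^2/L_n$, so that $R_n=\bigOp{1}$ when $\gamma>2$ and $R_n=\bigOp{n^{2/\gamma-1}}$ (up to a slowly varying factor) when $\gamma\in(1,2)$; in either case $R_n=o(n)$ and $R_n/L_n\To 0$ in probability, which is the single estimate driving the whole argument.

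Next I would introduce the hybrid functional $\widetilde\Theta_n(k)=\ind{f_n(k)>0}\,(k\abs{V_k})^{-1}\sum_{i\to j}\ind{D_i=k}\widehat F_n^\ast(\widehat D_j)$, which keeps the CM edge set and the CM membership $\{D_i=k\}$ but inserts the erased rank function evaluated at erased degrees, and split $\abs{\widehat\Theta_n(k)-\Theta_n(k)}\le \abs{\widehat\Theta_n(k)-\widetilde\Theta_n(k)}+\abs{\widetilde\Theta_n(k)-\Theta_n(k)}$. The first (structural) term compares the same summand over the ECM and CM edge sets. A degree-$k$ CM node none of whose incident edges are erased contributes identically to both, so the difference is carried only by the degree-$k$ CM nodes that lose an edge and by the nodes dropping into $\widehat V_k$. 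A fixed degree-$k$ node retains all its edges except with probability of order $k^2(1+R_n)/L_n$, so the expected number of affected degree-$k$ nodes is of order $k^2 f(k)(1+R_n)$, a vanishing fraction of $\abs{V_k}\asymp nf(k)$ since $R_n=o(n)$; the number migrating into $\widehat V_k$ is at most $R_n=o(nf(k))$. As every summand is bounded by $1$ and both denominators concentrate at $knf(k)$, this term tends to $0$ in probability.

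For the rank term I would write $\widehat F_n^\ast(\widehat D_j)-F_n^\ast(D_j)=\big[\widehat F_n^\ast(\widehat D_j)-F_n^\ast(\widehat D_j)\big]+\big[F_n^\ast(\widehat D_j)-F_n^\ast(D_j)\big]$. The first bracket is controlled uniformly: viewing $f_n^\ast$ and $\widehat f_n^\ast$ as the push-forwards under the degree label of the half-edge sampling measures $D_i/L_n$ and $\widehat D_i/\widehat L_n$, monotonicity of total variation under push-forward together with \eqref{eq:sup_bound_total_variantion} give $\sup_\ell\abs{\widehat F_n^\ast(\ell)-F_n^\ast(\ell)}\le 2d_{tv}(\widehat f_n^\ast,f_n^\ast)\le 2R_n/L_n\To 0$. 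The second bracket is where the real work lies. Replacing the neighbor sampling by its CM-conditional mean turns this contribution, up to a correlation term treated below, into $W_n:=L_n^{-1}\sum_j D_j\big(F_n^\ast(D_j)-F_n^\ast(\widehat D_j)\big)$, the size-biased average of the size-biased mass in $(\widehat D_j,D_j]$. I would bound $W_n$ by splitting nodes according to their relative loss: using $f^\ast(\ell)\asymp \ell^{-\gamma}$, for a node losing at most half of its stubs one has $F_n^\ast(D_j)-F_n^\ast(\widehat D_j)\lesssim (D_j-\widehat D_j)D_j^{-\gamma}$, whence $D_j\big(F_n^\ast(D_j)-F_n^\ast(\widehat D_j)\big)\lesssim (D_j-\widehat D_j)D_j^{1-\gamma}\le D_j-\widehat D_j$, so summing gives a contribution of order $R_n/L_n\To 0$; the remaining heavily eroded nodes, where the mass is only bounded by $1$, are confined to the extreme tail $D_j\gtrsim n^{2(\gamma-1)/\gamma}$, whose total size-biased weight is of order $n^{-2(\gamma-1)^2/\gamma}\To 0$.

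The main obstacle is exactly this second bracket, that is, showing that erasure preserves ranks even though it does not preserve degrees. The subtlety is that the size-biased neighbor law charges the very large degrees, which are also the degrees most strongly reduced by the removal of multiple edges; a naive degree-based (ANND-style) bound fails here, and one must instead use that the relative reduction $\widehat D_j/D_j\To 1$ for all but a top tail of vanishing size-biased weight, and convert this into a negligible rank increment through the regular variation of $f^\ast$. A secondary technical point, handled by a standard second-moment estimate for the CM pairing, is the weak correlation between the number of edges a neighbor $j$ receives from $V_k$ and its own erased degree $\widehat D_j$; because the dominant erosion of $j$ comes from self-loops and multiple edges to other large nodes rather than to the fixed-degree set $V_k$, this correlation affects only lower-order terms. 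For $\gamma>2$ all of the above is immediate since $R_n=\bigOp{1}$. Combining the vanishing of the structural term with that of both brackets of the rank term yields $\abs{\widehat\Theta_n(k)-\Theta_n(k)}\plim 0$, which is the assertion.
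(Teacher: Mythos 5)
Your decomposition into a structural term and a rank term parallels the paper's own split (its terms \eqref{eq:theta_bound_numerator}--\eqref{eq:theta_bound_edges} versus \eqref{eq:theta_bound_degree}), and your handling of the structural part is sound in spirit: the paper bounds those terms by $C E_n/(f(k)n)$ and uses $E_n/n \plim 0$, just as you do (your claimed scaling $n^{2/\gamma-1}$ for the erased stubs $R_n=E_n$ when $\gamma\in(1,2)$ is actually too small --- erasures are dominated by multiple edges between moderately large nodes, giving order $n^{2-\gamma}$ up to slowly-varying factors --- but this is immaterial since both are $o(n)$). The genuine gap is in the rank term, where both of your key estimates fail, and this is precisely where the difficulty of the theorem sits. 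First, the chain $\sup_\ell\abs{\widehat{F}_n^\ast(\ell)-F_n^\ast(\ell)}\le 2d_{tv}(\widehat{f}_n^\ast,f_n^\ast)\le 2R_n/L_n$ is false at the second inequality: $f_n^\ast$ puts an atom of mass $D_i/L_n$ at location $D_i$, so when a hub with $D_i\asymp n^{1/\gamma}$ loses a single stub, that atom of mass $\asymp n^{1/\gamma-1}$ relocates, contributing $\asymp n^{1/\gamma-1}$ to the total variation distance but only $O(1/n)$ to $R_n/L_n$; moreover, monotonicity of $d_{tv}$ under push-forwards requires the same map on both sides, whereas yours are $i\mapsto D_i$ and $i\mapsto\widehat{D}_i$. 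The paper's actual bound for this bracket is $\frac{2E_n}{L_n}+\frac{2}{L_n}\sum_i D_i\ind{Y_i>0}$, which involves the size-biased mass of eroded nodes, not the erased-stub count. Second, your estimate $F_n^\ast(D_j)-F_n^\ast(\widehat{D}_j)\lesssim(D_j-\widehat{D}_j)D_j^{-\gamma}$ for lightly eroded nodes substitutes the limiting density for the empirical one. Since $F_n^\ast(D_j)-F_n^\ast(\widehat{D}_j)=\frac{1}{L_n}\sum_i D_i\ind{\widehat{D}_j<D_i\le D_j}$, node $j$'s own atom already gives the lower bound $D_j/L_n$ whenever $Y_j>0$, and for hubs with $\gamma\in(1,2)$ this is $\asymp n^{1/\gamma-1}\gg D_j^{-\gamma}\asymp n^{-1}$; since the nodes that actually get eroded are exactly the hubs, your bound fails in the only regime that matters. (Your heavily-eroded case, by contrast, needs no tail-confinement claim at all: $D_j\ind{Y_j>D_j/2}\le 2Y_j$, so that contribution is at most $2R_n/L_n$.)

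The upshot is that, once corrected, both brackets of your rank term reduce to the quantity $\frac{1}{n}\sum_j D_j\ind{Y_j>0}$, and your premise that $R_n/L_n\plim 0$ is ``the single estimate driving the whole argument'' is wrong: that quantity is not controlled by $R_n/n$, because erasures concentrate on high-degree vertices, whose size-biased weight is large. This is the missing ingredient, and it is where the paper does its real work: by Markov's and then H\"{o}lder's inequalities, $\Prob{\frac{1}{n}\sum_j D_j\ind{Y_j>0}>c}\le \frac{1}{c}\Exp{D_1^p}^{1/p}\Prob{Y_1>0}^{1/q}$ with $p=\frac{\gamma+1}{2}<\gamma$ (so $\Exp{D_1^p}<\infty$ by regular variation) and $q=p/(p-1)$, and then $\Prob{Y_1>0}\to 0$ is imported from \cite[Lemma 5.3]{chen2013}, adapted to the undirected setting. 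Some substitute for this step --- a proof that the size-biased weight of vertices losing at least one stub vanishes --- is indispensable; your regular-variation heuristic does not provide one, so the argument as written cannot be completed.
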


With this result we obtain the following version of Theorem \ref{thm:annr_convergence_cm} for the ECM.

\begin{theorem}[Convergence of ANNR in ECM]\label{thm:convergence_annr_ecm_weak}
Let $\mathscr{D}$ be regularly varying with exponent $\gamma > 1$ and let $\{G_n\}_{n \ge 1}$ be a sequence of graphs generated by
ECM with ${\bf D}_n=\emph{\texttt{IID}}(\mathscr{D})$. Then, for any $\delta > 0$ and $k$ such that $f^\ast(k) > 0$,
\[
	\lim_{n \to \infty}  \Prob{ \left|{\widehat{\Theta}_n(k) - \Exp{F^\ast(\mathscr{D}^\ast)}}\right|\ind{ f_n(k)>0}
    > \delta} = 0.
\]
\end{theorem}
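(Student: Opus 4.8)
The plan is to obtain Theorem~\ref{thm:convergence_annr_ecm_weak} as a direct consequence of the two preceding results, exactly as Theorem~\ref{thm:convergence_annd_ecm_weak} was deduced from Theorem~\ref{thm:annd_erased_model_error_term} together with the CM convergence of the ANND. The starting point is the triangle inequality
\[
	\left|\widehat{\Theta}_n(k) - \Exp{F^\ast(\mathscr{D}^\ast)}\right| \ind{f_n(k) > 0}
	\le \left|\widehat{\Theta}_n(k) - \Theta_n(k)\right|
	+ \left|\Theta_n(k) - \Exp{F^\ast(\mathscr{D}^\ast)}\right| \ind{f_n(k) > 0},
\]
where in the first term I drop the indicator using $\ind{f_n(k)>0} \le 1$. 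Fixing $\delta > 0$ and applying a union bound, it suffices to show that each of
\[
	\Prob{\left|\widehat{\Theta}_n(k) - \Theta_n(k)\right| > \tfrac{\delta}{2}}
	\quad \text{and} \quad
	\Prob{\left|\Theta_n(k) - \Exp{F^\ast(\mathscr{D}^\ast)}\right| \ind{f_n(k) > 0} > \tfrac{\delta}{2}}
\]
tends to zero as $n \to \infty$.

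The first probability vanishes immediately by Theorem~\ref{thm:annr_erased_model_error_term}, which is stated for every $\delta > 0$ and controls precisely the erasure error $\left|\widehat{\Theta}_n(k) - \Theta_n(k)\right|$ under regular variation with $\gamma > 1$. For the second probability I would invoke Theorem~\ref{thm:annr_convergence_cm}, the CM convergence of the ANNR. The only point requiring a word is the moment hypothesis: Theorem~\ref{thm:annr_convergence_cm} requires a finite $(1+\eta)$-moment, whereas here $\mathscr{D}$ is merely assumed regularly varying with exponent $\gamma > 1$. Since a regularly-varying $\mathscr{D}$ with exponent $\gamma$ has a finite $p$-th moment for every $p < \gamma$, I can choose any $0 < \eta < \min\{\gamma - 1, 1\}$, so that $\Exp{\mathscr{D}^{1+\eta}} < \infty$ and Theorem~\ref{thm:annr_convergence_cm} applies with this $\eta$. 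That theorem then gives, for any fixed $\delta' < \frac{\eta}{8 + 4\eta}$,
\[
	\Prob{\left|\Theta_n(k) - \Exp{F^\ast(\mathscr{D}^\ast)}\right| \ind{f_n(k) > 0} > n^{-\delta'}}
	= \bigO{n^{-\delta'}}.
\]
For $n$ large enough we have $n^{-\delta'} \le \delta/2$, so the second probability is bounded above by $\bigO{n^{-\delta'}} \to 0$, which completes the argument.

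There is essentially no hard step here: the theorem is a corollary of Theorems~\ref{thm:annr_convergence_cm} and~\ref{thm:annr_erased_model_error_term}, with all the analytic difficulty already absorbed into those two results (the moment-truncation CM analysis and the bounding of the erasure error, respectively). The only mild care concerns bookkeeping around the indicator $\ind{f_n(k)>0}$ — which refers to the pre-erasure degree density and matches the indicator already carried by Theorem~\ref{thm:annr_convergence_cm} — and the translation of the regular-variation assumption into a usable $(1+\eta)$-moment bound, as noted above.
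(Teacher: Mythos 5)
Your proof is correct and follows exactly the paper's own route: the paper deduces Theorem~\ref{thm:convergence_annr_ecm_weak} from Theorems~\ref{thm:annr_convergence_cm} and~\ref{thm:annr_erased_model_error_term} via the same triangle-inequality splitting of $\left|\widehat{\Theta}_n(k) - \Exp{F^\ast(\mathscr{D}^\ast)}\right|$ into the CM-convergence term and the erasure-error term. Your additional remarks --- choosing $0 < \eta < \min\{\gamma-1,1\}$ so that regular variation with exponent $\gamma > 1$ yields the finite $(1+\eta)$-moment needed by Theorem~\ref{thm:annr_convergence_cm}, and the bookkeeping of the indicator $\ind{f_n(k)>0}$ --- are correct and in fact make explicit details the paper leaves implicit.
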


Similar to Theorem \ref{thm:convergence_annd_ecm_weak}, this result follows from Theorem \ref{thm:annr_convergence_cm}
and \ref{thm:annr_erased_model_error_term} by splitting
\[
	\left|{\widehat{\Theta}_n(k) - \Exp{F^\ast(\mathscr{D}^\ast)}}\right| \le
	\left|{\Theta_n(k) - \Exp{F^\ast(\mathscr{D}^\ast)}}\right|
	+\left|\widehat{\Theta}_n(k) - \Theta_n(k)\right|.
\]
Note, however, that Theorem \ref{thm:convergence_annr_ecm_weak} is less strong than Theorem \ref{thm:convergence_annd_ecm_weak},
since we only have convergence in probability. Replacing the lower bound $\delta$ inside the probability with
$n^{-\delta}$ requires a better understanding of the scaling of the number of removed stubs of nodes, which falls outside the focus of this paper.
Still, we show in the next section that the numerical results for ANNR in ECM are as strong as in CM.

\section{Numerical experiments}\label{sec:experiments}

In this section we will illustrate our results for ANND and ANNR in CM of different sizes $n$ and regularly-varying degree distributions. For the sake of illustration, we take $\gamma=2.5$ (finite variance) and $\gamma=1.5$ (infinite variance). We generate 100 realizations of each graph with each $\gamma$.

Here we will show results for the multi-graph CM. We have also computed the ANND and the ANNR for ECM. However, for small values of $k$ the results for CM and ECM were very close, as predicted by Theorem~\ref{thm:convergence_annd_ecm_weak}, and for large values of $k$ the results were unstable and inconclusive because of the small sample size. Since our main point here is to demonstrate the advantages of the ANNR, we will omit the results for ECM and leave this topic for future research.

In order to generate power law degree sequences, let $X$ be a Pareto random variable with exponent $\gamma > 1$,
\begin{equation*}
\Prob{X > t} = t^{-\gamma} \quad \text{for all } t \ge 1.
\end{equation*}
Then $\mathscr{D} = \floor{X}$, which is integer-valued, satisfies
\begin{equation}\label{eq:experiments_floor_pareto_distribution}
\Prob{\mathscr{D} = k} = \Prob{k \le X < k + 1} = k^{-\gamma} - (k + 1)^{-\gamma} \sim \gamma t^{-\gamma - 1}.
\end{equation}
In particular, it follows that $\mathscr{D} = \floor{X}$ satisfies \eqref{eq:def_regularly_varying_pmf} with eventually monotone slowly-varying function. Hence it is regularly varying with exponent $\gamma$.

We can express the first and second moment of $\mathscr{D}$ through the Riemann zeta function $\zeta$ as follows:
\[
\nu_1 = \zeta(\gamma) \quad \text{and} \quad
\nu_2 = 2\zeta(\gamma-1)-\zeta(\gamma),
\]
from which we obtain that $\nu_2/\nu_1 =2\zeta(\gamma-1)/\zeta(\gamma) - 1$. For specific numerical values see Table \ref{tbl:annr_limit_values}.

Now, for each value of $\gamma$ and $n$, we generate $100$ graphs $G_n$, using CM with ${\bf D}_n=\texttt{IID}(\mathscr{D})$. Then, for each $k$, we compute the average of $\Phi_n(k)$ and $\Theta_n(k)$, with respect to the $100$ generated graphs and plot these as a function of $k$.

\subsection{Computing $\Exp{F^\ast(\mathscr{D}^\ast)}$}

In order to validate our results for the ANNR in CM we need to know the value of its limit $\Exp{F^\ast(\mathscr{D}^\ast)}$.
Elementary computations yield
\[
F^\ast(k) = \frac{1}{\zeta(\gamma)}\sum_{t = 1}^k t^{-\gamma} - \frac{k(k+1)^{-\gamma}}{\zeta(\gamma)}.
\]
Now, since $\mathscr{D}^\ast$ has probability density function $f^\ast(k) = kf(k)/\zeta(\gamma)$ we have that
\begin{align*}
\Exp{F^\ast(\mathscr{D}^\ast)} &= \sum_{k = 1}^\infty F^\ast(k)f^\ast(k)\\
&= \frac{1}{\zeta(\gamma)^2 }\sum_{k = 1}^\infty \left(kf(k) \sum_{t = 1}^k t^{-\gamma}
- k^2(k+1)^{-\gamma} f(k)\right)\\
&= \frac{1}{\zeta(\gamma)^2 }\sum_{k = 1}^\infty \left( \left(k^{1-\gamma} - k(k + 1)^{-\gamma}\right)
\sum_{t = 1}^k t^{-\gamma} - k^{2-\gamma}(k+1)^{-\gamma} + k^2(k+1)^{-2\gamma}\right).
\end{align*}
The above expression cannot be written in terms of known functions. Hence, we need to evaluate it numerically.
To this end, we chose $N$ large enough such that the first two digits of $\sum_{k = 1}^M F^\ast(k)f^\ast(k)$
remain the same for $M \ge N$. We remark that for $\gamma = 1.5$ we had to take $N = 10^7$ while we could not achieve the precision up to the third digit even for $N = 5 \times 10^8$. The results are shown in Table \ref{tbl:annr_limit_values}.
\begin{table}[!h]
	\centering
	\begin{tabular}{lrr}
		$\gamma$ & $\nu_2/\nu_1$ & $\Exp{F^\ast(\mathscr{D}^\ast)}$ \\
		\hline
		$1.5$ & - & $0.545542$ \\
		$1.8$ & - & $0.592175$ \\
		$2.0$ & - & $0.625316$ \\
		$2.2$ & $6.502744$ & $0.658512$ \\
		$2.5$ & $2.894745$ & $0.706477$ \\
	\end{tabular}
	\caption{Numerical values of the limit of $\Phi_n$ and $\Theta_n$ as given by, respectively, Theorem \ref{thm:convergence_annd_cm_strong} and
		Theorem \ref{thm:annr_convergence_general}, for $\mathscr{D}$ with distribution \eqref{eq:experiments_floor_pareto_distribution} and different values of $\gamma$.}
	\label{tbl:annr_limit_values}
\end{table}

\subsection{ANND and ANNR}
We compute the ANND and the ANNR for 100 realizations of the sequence ${\bf D}_n=\texttt{IID}({\mathscr{D}})$. Recall that when there is no degree $k$ in ${\bf D}_n$, then $\Phi_n(k)$ and $\Theta_n(k)$ are set to zero. The values of $\Phi_n(k)$ and $\Theta_n(k)$ (including the zero values) are averaged over 100 CM graphs with ${\bf D}_n=\texttt{IID}({\mathscr{D}})$. Figures \ref{fig:annd_cm} and~\ref{fig:annr_cm} show the results for $\Phi_n$ and $\Theta_n$, respectively.
\begin{figure}[!pb]
	\centering
	\begin{subfigure}{0.48\linewidth}
		\includegraphics[scale=0.33]{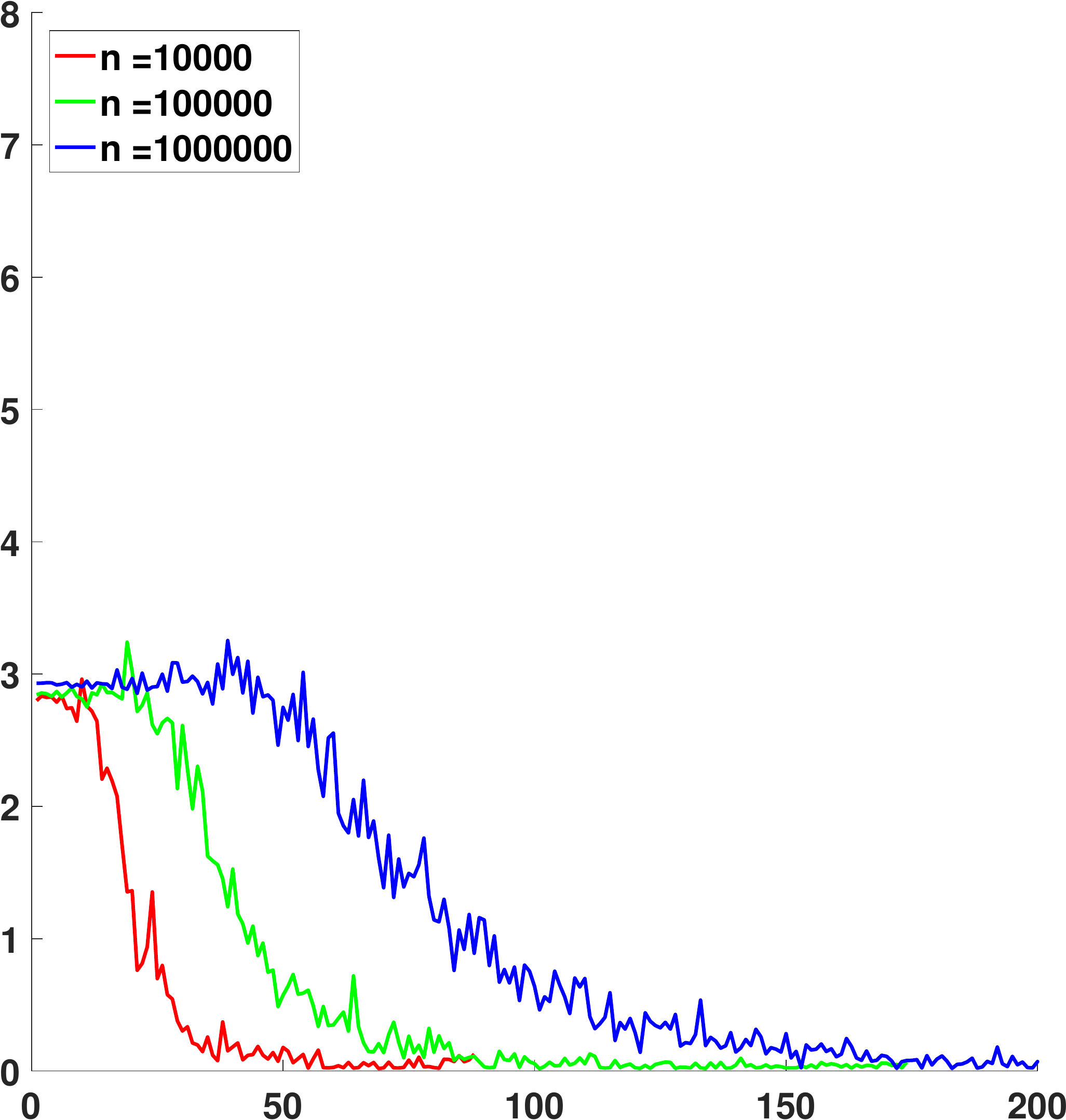}
		\caption{$\gamma = 2.5$}
	\end{subfigure}
	\begin{subfigure}{0.48\linewidth}
		\includegraphics[scale=0.33]{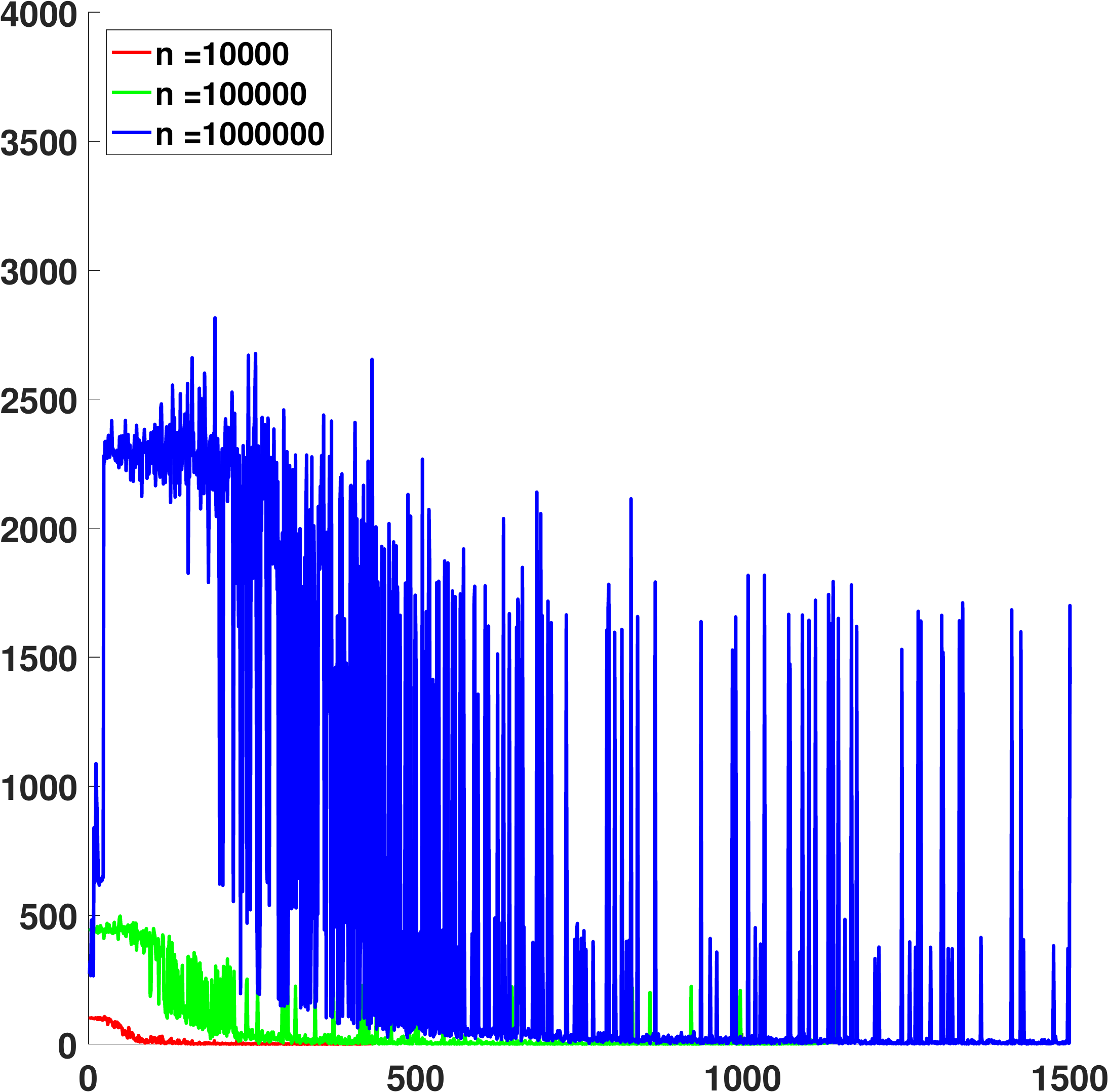}
		\caption{$\gamma = 1.5$}
	\end{subfigure}
	\caption{Plots for $\Phi_n(k)$, as a function of $k$, based on $100$ CM graphs with ${\bf D}_n = \texttt{IID}(\mathscr{D})$.}
	\label{fig:annd_cm}
\end{figure}
\begin{figure}[!pt]
	\centering
	\begin{subfigure}{0.48\linewidth}
		\includegraphics[scale=0.33]{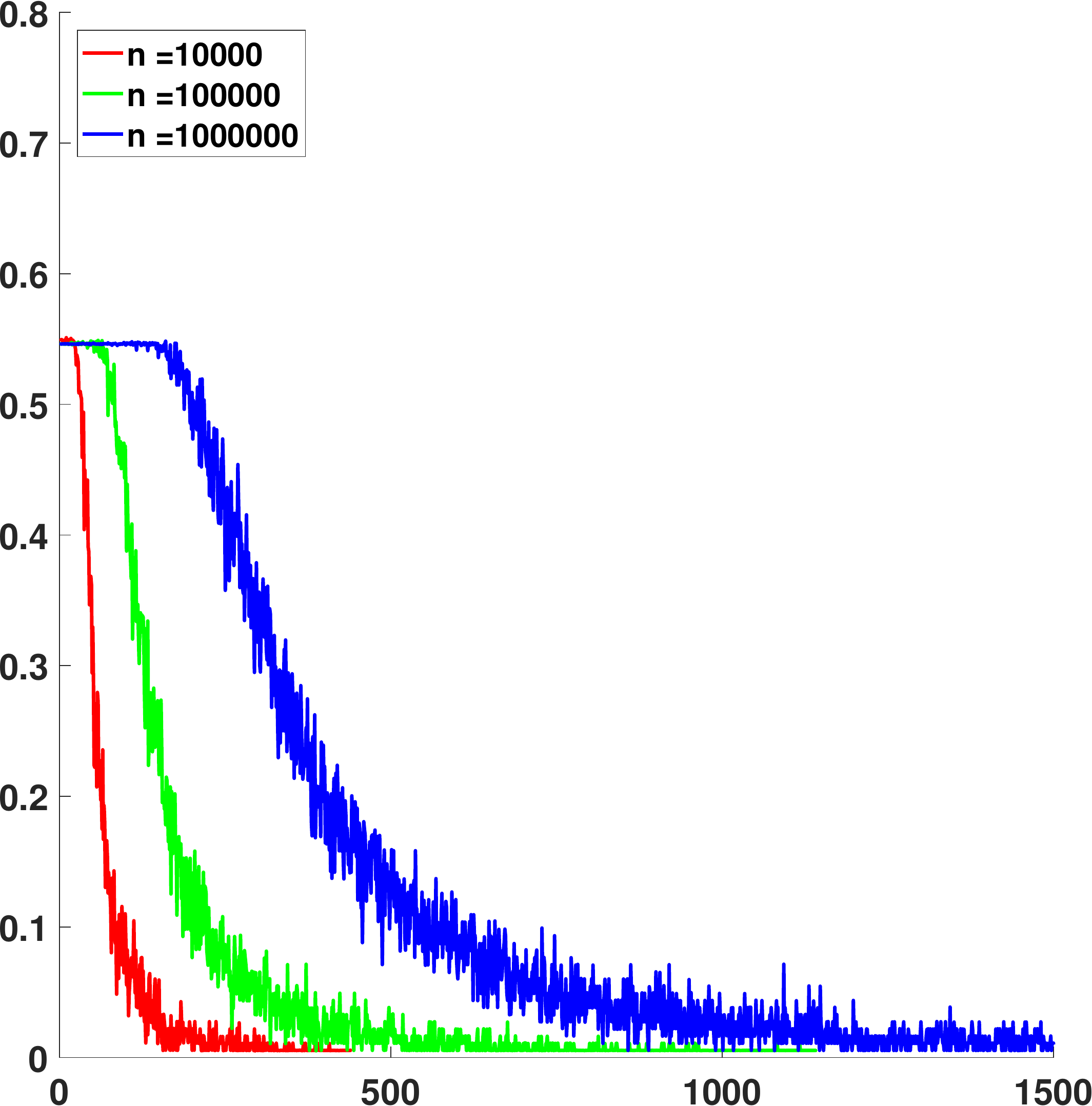}
		\caption{$\gamma = 1.5$}
	\end{subfigure}
	\begin{subfigure}{0.48\linewidth}
		\includegraphics[scale=0.33]{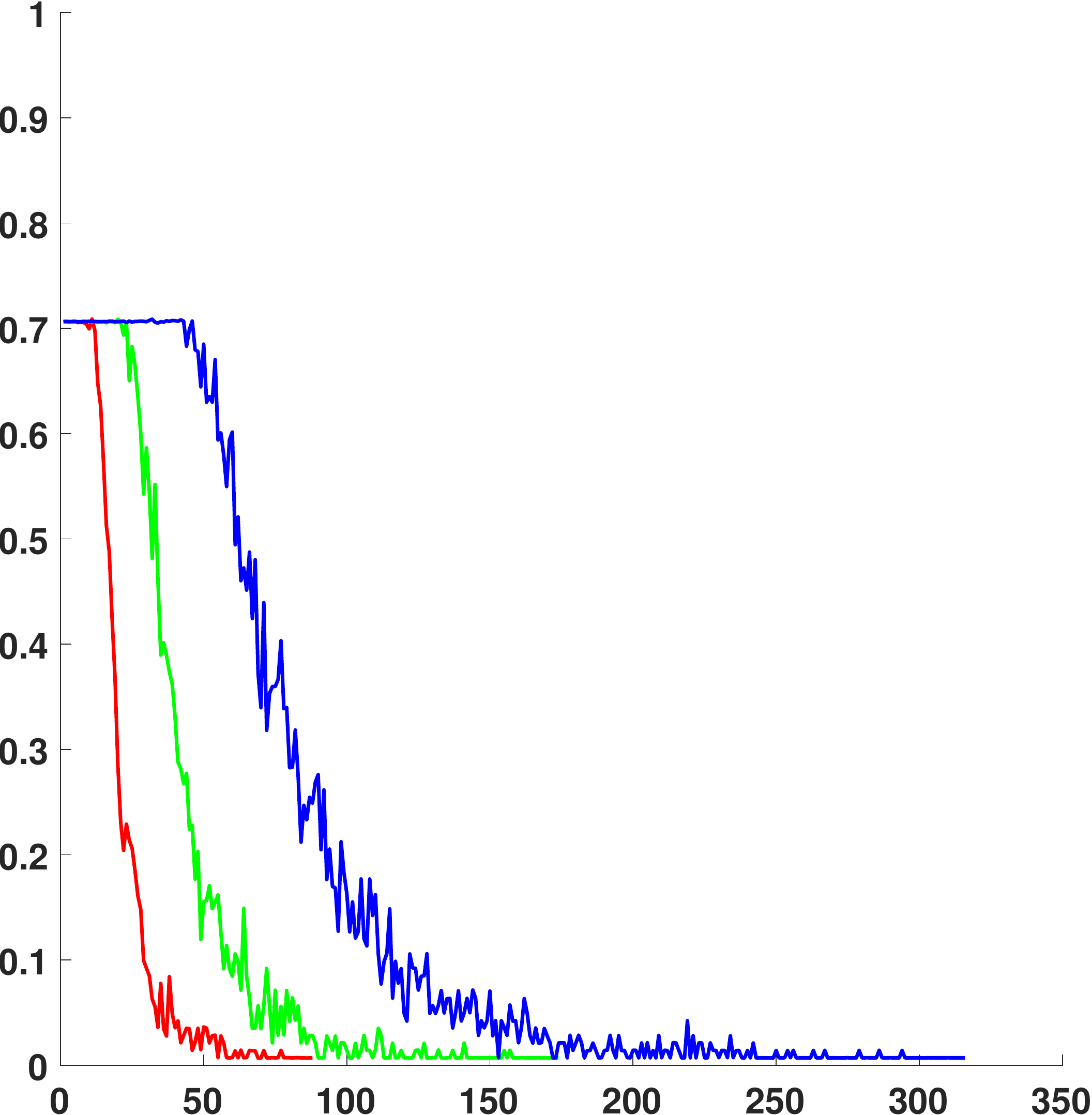}
		\caption{$\gamma = 2.5$}
	\end{subfigure}
	\caption{Plots for $\Theta_n(k)$, as a function of $k$, based on $100$ CM graphs with ${\bf D}_n = \texttt{IID}(\mathscr{D})$.}
	\label{fig:annr_cm}
\end{figure}
\begin{figure}[!h]
	\centering
		\includegraphics[scale=0.33]{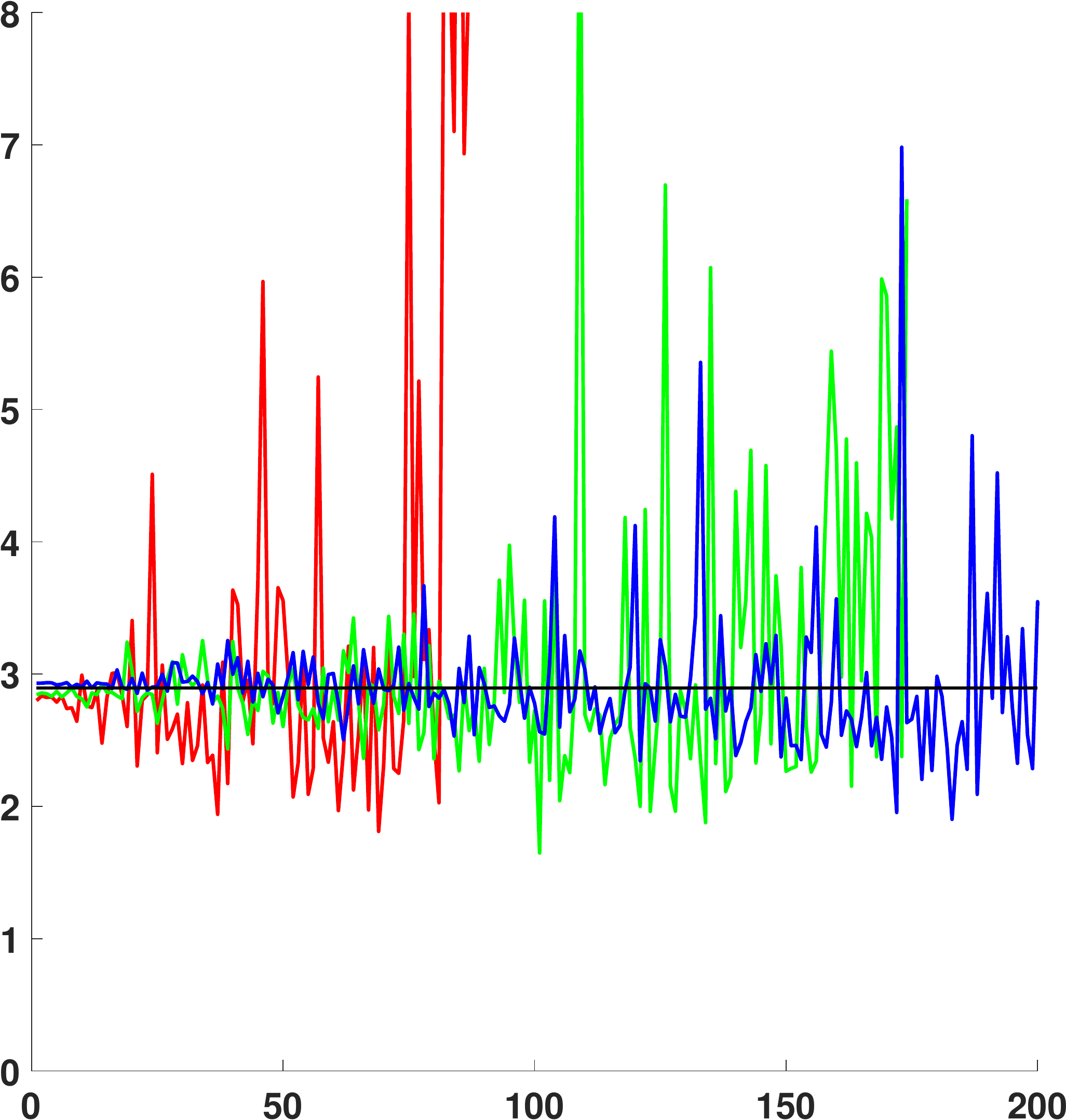}
	\caption{The corrected average of $\Phi_n(k)$, for $100$ graphs with $\gamma = 2.5$. The black line represents the theoretical limit $\nu_2/\nu_1$.}
	\label{fig:annd_cm_corrected}
\end{figure}

The first observation is that the ANND clearly diverges when $\gamma < 2$, while this does not happen for the ANNR.

The second observation is that both functions decline sharply when $k$ exceeds a certain threshold, which increases in $n$. This phenomenon is explained by Theorem \ref{thm:degree_sequence}. Indeed, according to this theorem, when $k$ is of a larger order of magnitude than $n^{1/(\gamma + 1)}$, then the probability that degree $k$ is present in ${\bf D}_n=\texttt{IID}({\mathscr{D}})$ is smaller than one and in fact converges to zero. In the numerical experiments, for large $k$, degree $k$ is not present in all 100 sampled sequences. Then $\Phi_n(k)$ and $\Theta_n(k)$ are set to zero, and this artificially decreases the average value of $\Phi_n(k)$ and $\Theta_n(k)$ over the 100 samples. Note that in Figures \ref{fig:annd_cm} and \ref{fig:annr_cm} the decline indeed starts around the point $k=n^{1/(\gamma+1)}$.

We can correct for this by counting, for each $k$, the number of graphs that have nodes of degree $k$ and then dividing by this number instead of dividing by the sample size $100$.
The result for the corrected ANND with $\gamma=2.5$ is shown in Figure \ref{fig:annd_cm_corrected}.
Observe that the plot no longer shows the decrease with $k$, and for small $k$ the value of $\Phi_n(k)$ is unchanged. However, for large $k$ we obtain very unstable fluctuations due to the small sample size. This can be possibly remedied, at least in the finite variance scenario, by averaging over a larger number of graphs. How large the sample size should be to reach stability for large $k$, and under which conditions it is possible, is another interesting question for future research.

 We have also obtained numerical results for the corrected $\widehat{\Phi}_n(k)$ in the ECM. These are omitted because the plots showed similar high fluctuations as in Figure~\ref{fig:annd_cm_corrected}, and numerical comparison between CM and ECM was inconclusive.
\begin{figure}[!p]
	\centering
	\begin{subfigure}{0.48\linewidth}
		\includegraphics[scale=0.33]{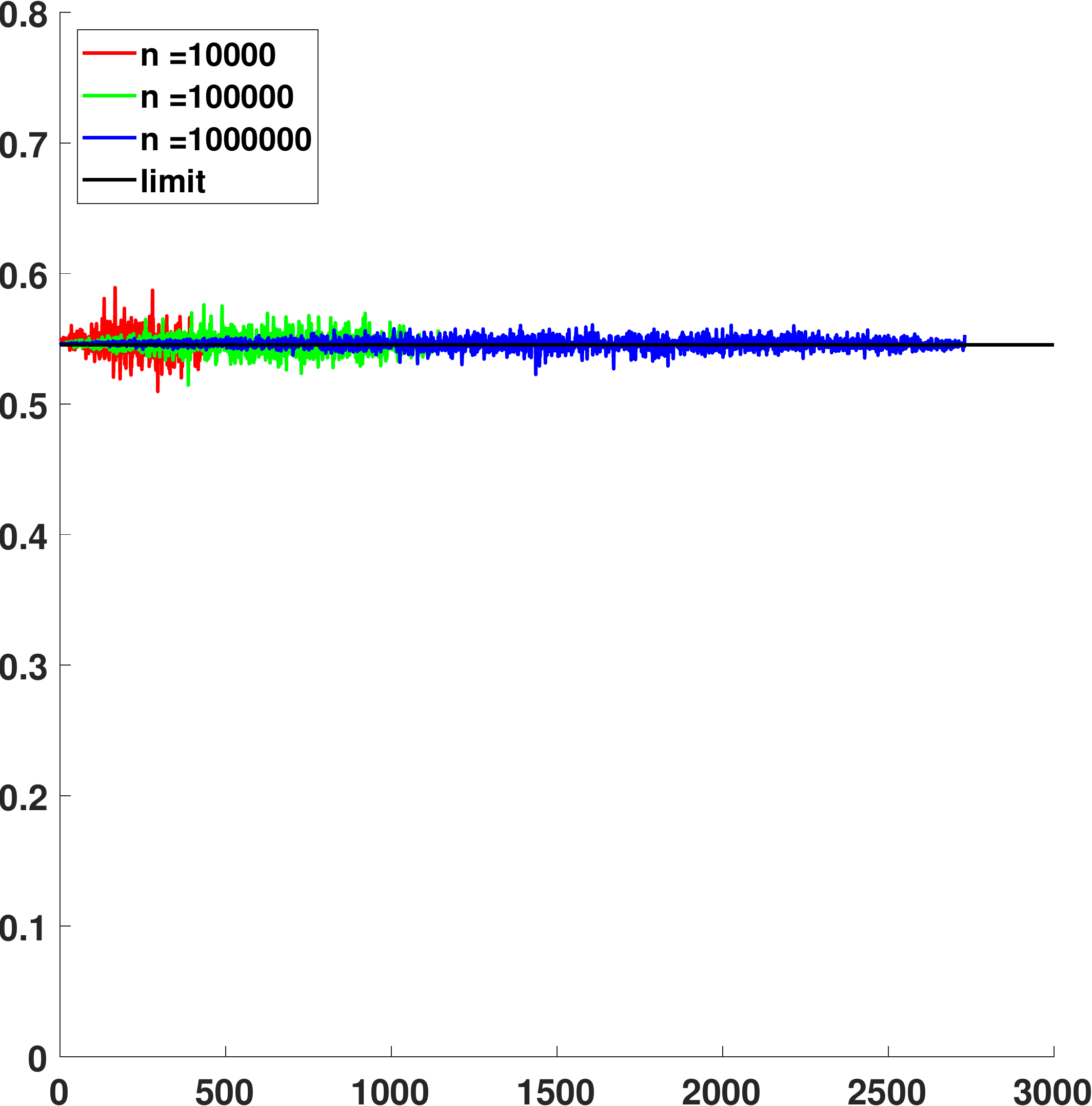}
		\caption{$\gamma = 1.5$}
		\label{sfig:annr_cm_15_corr_lim}
	\end{subfigure}
	\begin{subfigure}{0.48\linewidth}
		\includegraphics[scale=0.33]{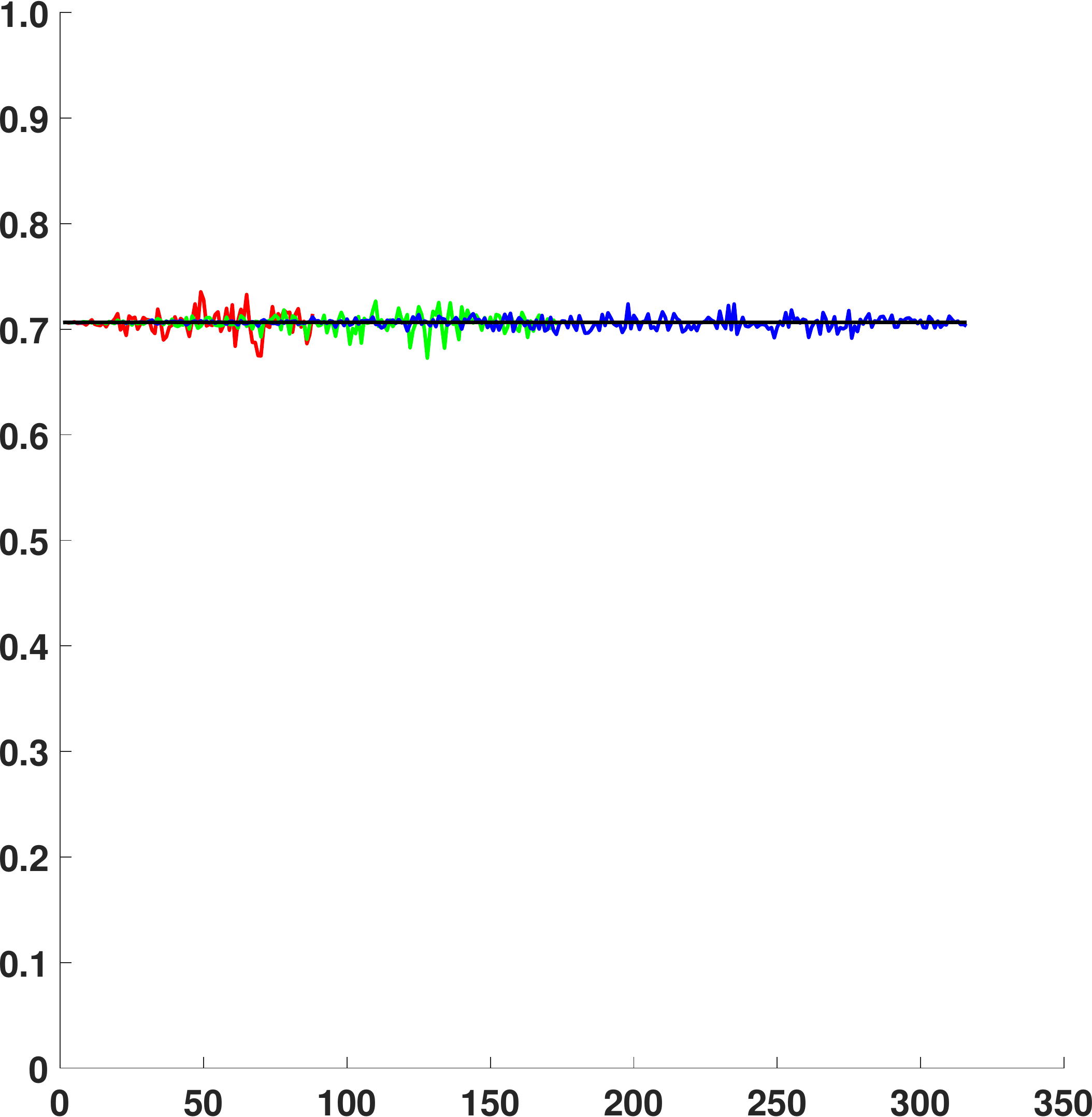}
		\caption{$\gamma = 2.5$}
		\label{sfig:annr_cm_25_coor_lim}
	\end{subfigure}
	\caption{Corrected average of $\Theta_n$ in CM, for $100$ graphs with $\gamma = 2.5$ and $\gamma = 1.5$. The black line represents the
		theoretical limit $\Exp{F^\ast(\mathscr{D}^\ast)}$.}
	\label{fig:annr_cm_corrected}
\end{figure}~
\begin{figure}[!p]
	\centering
	\begin{subfigure}{0.48\linewidth}
		\includegraphics[scale=0.33]{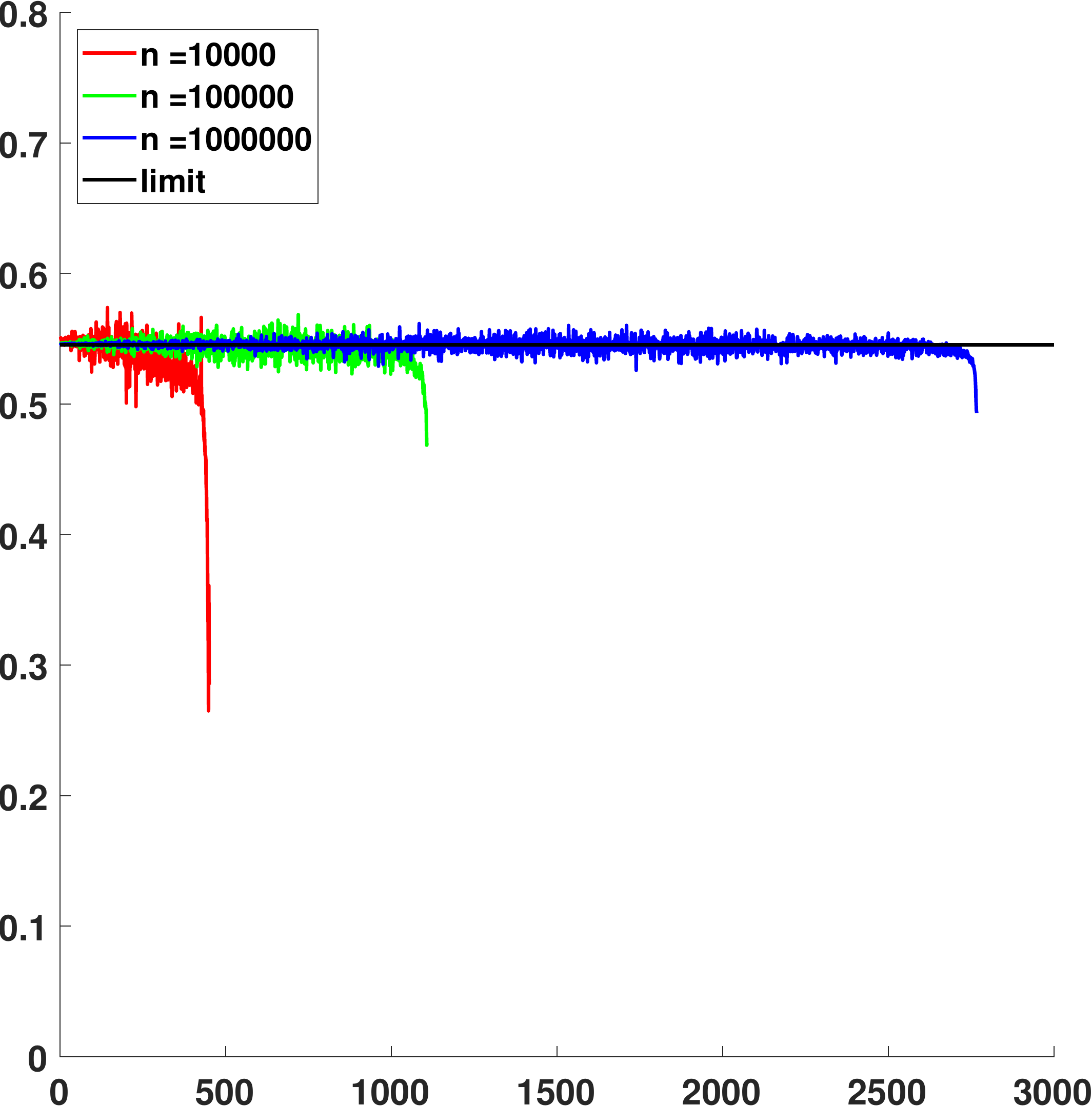}
		\caption{$\gamma = 1.5$}
		\label{sfig:annr_ecm_15_corr_lim}
	\end{subfigure}
	\begin{subfigure}{0.48\linewidth}
		\includegraphics[scale=0.33]{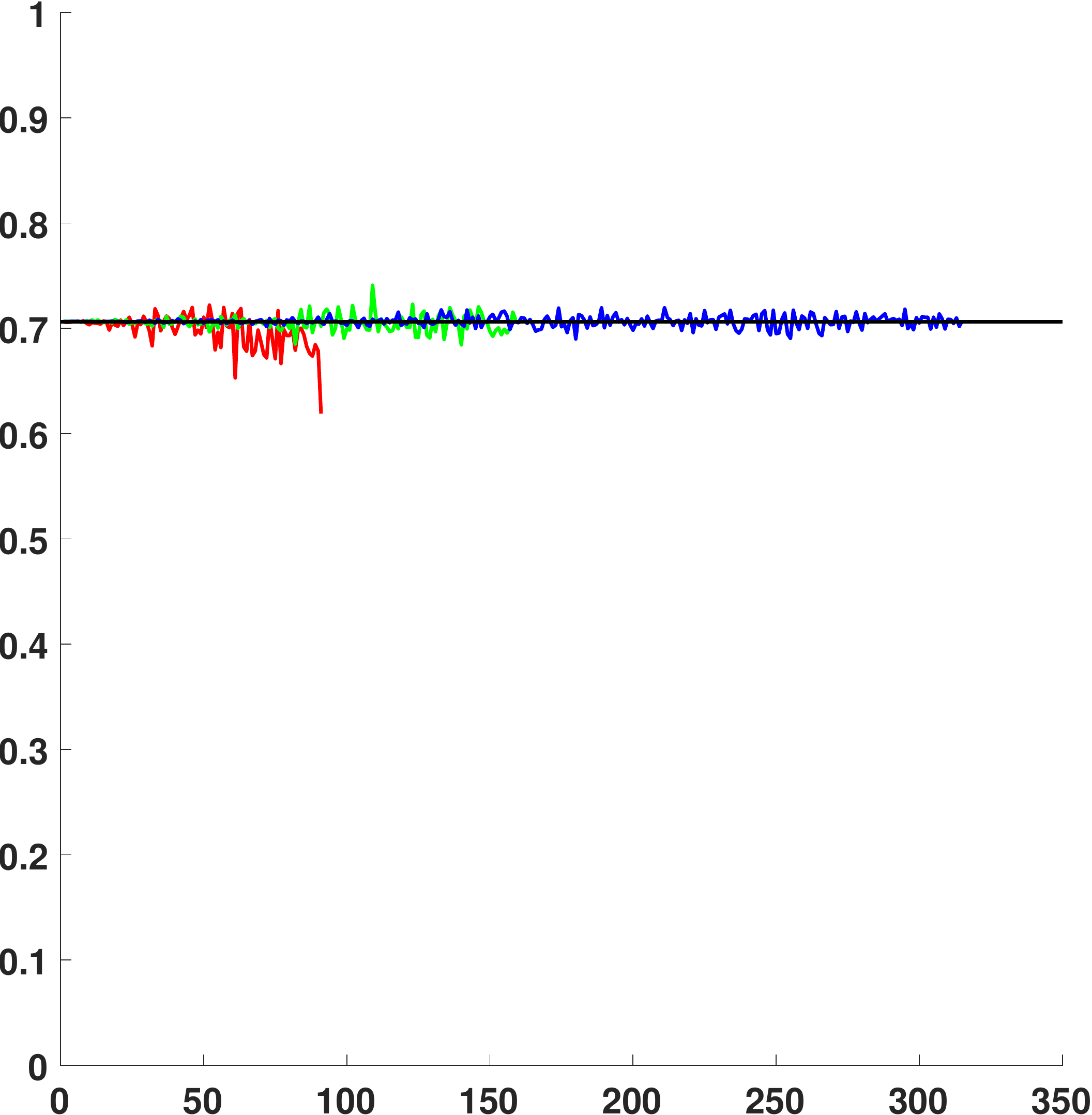}
		\caption{$\gamma = 2.5$}
		\label{sfig:annr_ecm_25_coor_lim}
	\end{subfigure}
	\caption{Corrected average of $\Theta_n$ in ECM, for $100$ graphs with $\gamma = 2.5$ and $\gamma = 1.5$. The black line represents the
		theoretical limit $\Exp{F^\ast(\mathscr{D}^\ast)}$.}
	\label{fig:annr_ecm_corrected}
\end{figure}

Since $\Theta_n(k) \le 1$ for all $k$ and $n$, ANNR remains stable after correction even in the infinite variance scenario. This is clearly observed in Figure \ref{fig:annr_cm_corrected} where we present the corrected plot of $\Theta_n(k)$ in CM, for $\gamma=2.5$ and $\gamma=1.5$. In both cases the plot very closely follows the straight line, which represents the limit value $\Exp{F^\ast(\mathscr{D})}$.

Finally, in Figure~\ref{fig:annr_ecm_corrected} we plot the corrected version of ANNR for the ECM. We see that the ANNR in ECM again shows a great stability, and we clearly observe the point-wise convergence to the right constant for each fixed $k$, as stated in Theorems~\ref{thm:annr_convergence_cm} and ~\ref{thm:annr_erased_model_error_term}. We also clearly observe structural correlations, or finite-size effects, for top values of the degrees, especially for $\gamma = 1.5$. Since the rank correlations are not affected by high dispersion in the values of the degrees, these finite-size effects can only be explained by simplicity of the graph. This is in agreement with previous work~\cite{Hoorn2015PhysRev}, where we observed structural correlations in another rank-based dependency measure --  Spearman's rho. We see that for ANNR, these effects appear only when $k$ is very large, say, greater than some $k_{critical}(n)$. One can expect that $k_{critical}(n)$ scales as a positive power of $n$, however, establishing the exact scaling for $k_{critical}(n)$ seems to be a difficult problem.

\section{Discussion}

The most important implication of our results is that the ANND cannot be used in the case when degrees have infinite variance. This is a very similar situation as was observed before for Pearson's correlation coefficient~\cite{Hofstad2014}. In particular, the ANND scales with the graph size, which makes it not suitable for comparison of networks of the same structure but different sizes. Moreover, even when rescaled, the ANND converges to a random variable instead of a number, and therefore it is impossible to establish any meaningful relation between the scaled ANND and the network's assortativity. Therefore, the use of the ANNR is strongly recommended.

In addition, we would like to mention two interesting open problems, stemming from this research, that we will address in the near future. First of all, in fact, the ANND deals with `double sampling' as follows. 1) In order to create a graph of size $n$, the degree sequence  $\texttt{IID}(\mathscr{D})$ is sampled. As we proved in Theorem~\ref{thm:degree_sequence}, each specific sufficiently large value $k$ will appear in such sequence only with a small probability, therefore it will take some time to sample several sequences that include such degree. 2) Each node in the network samples its neighbors. This double sampling gives rise to vast fluctuations of the ANND for large values of $k$ even in the CM with finite variance of the degrees, despite of the convergence of the ANND to a deterministic (constant) limit, that has been proved in Theorem~\ref{thm:convergence_annd_cm_strong}. We observed this phenomenon in Figure~\ref{fig:annd_cm_corrected}. The magnitude of these fluctuations and the number of graphs necessary to obtain convergence of the ANND for all $k$ have not been addressed in this paper, and are the focus of future research.

Second, it has been observed in the literature~\cite{Barabasi2016,Catanzaro2005} that the ANND in the ECM decreases for large values of $k$. This phenomenon has been ascribed to the simplicity of the graph, because large nodes do not have other large nodes to connect to, so they must create disassortative connecitons.  These `structural negative correlations', or `finite-size effects' are broadly recognized in the literature, see~\cite{Hoorn2016b} and references therein. Interestingly, our results state that ANND in the CM and the ECM are asymptotically equivalent for any fixed $k$, and the same holds for the ANNR. However,  numerical results suggest that ANND and even ANNR are subject to the finite-size effects for $k$ that scales as a power of $n$. The exact mathematical characterization of the finite-size effects in terms of ANND and ANNR remains an interesting open problem.

\section{Proofs}\label{sec:proofs}

In this section, we give the proofs of the results in the paper.

\subsection{Regularly-varying degrees}

We start with a small result, relating regular variation of $\Prob{X = k}$, for some integer-valued random variable, to that of the inverse cdf $\Prob{X > t}$.

\begin{lemma}\label{lem:regularly_varying_cdf}
Suppose that $X$ is an integer-valued random variable with probability function
\[
	\Prob{X = k} = l(k) k^{-\gamma-1}, \quad k = 1,2,\dots,
\]
with $\gamma > 1$ and some slowly-varying function $l(x)$ that is eventually monotone. Then, as $t \to \infty$,
\[
	\Prob{X > t} \sim \frac{l(t)}{\gamma} t^{-\gamma}.
\]
In particular this implies that
\[
	\Prob{X > t} = \tilde{l}(t) t^{-\gamma},
\]
with $\tilde{l}(x) \sim l(x)/\gamma$.
\end{lemma}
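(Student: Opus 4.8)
The plan is to treat $\Prob{X > t}$ as a tail sum of a regularly-varying sequence and to reduce it to the corresponding tail integral, to which Karamata's theorem applies directly. Extend $l$ to a function on $[1,\infty)$ and set $f(x) = l(x) x^{-\gamma - 1}$, so that $f$ is regularly varying with index $-(\gamma+1)$, and for integer $t$,
\[
\Prob{X > t} = \sum_{k = t+1}^\infty l(k) k^{-\gamma-1} = \sum_{k > t} f(k).
\]
Since $\gamma > 1$ we have $-(\gamma+1) < -1$, which is precisely the range in which the tail integral $\int_t^\infty f(x)\,dx$ converges and is itself regularly varying.

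First I would pass from the sum to the integral. This is the one place where the eventual monotonicity of $l$ is used: since $l$ is eventually monotone and $x^{-\gamma-1}$ is decreasing, the product $f$ is eventually monotone (in fact eventually decreasing), so for all large $k$ the value $f(k)$ is sandwiched between $\int_{k}^{k+1} f(x)\,dx$ and $\int_{k-1}^{k} f(x)\,dx$. Summing these bounds over $k > t$ yields
\[
\int_{t+1}^\infty f(x)\,dx \;\le\; \sum_{k > t} f(k) \;\le\; \int_{t}^\infty f(x)\,dx
\]
up to finitely many initial terms that do not affect the asymptotics. (Alternatively, one may invoke the uniform convergence theorem for regularly-varying functions to obtain $\int_k^{k+1} f(x)\,dx \sim f(k)$ uniformly in $k$, avoiding the monotonicity hypothesis altogether.)

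Next I would apply Karamata's theorem. Because $f$ is regularly varying with index $-(\gamma+1) < -1$, Karamata's theorem~\cite{Bingham1989} gives
\[
\int_t^\infty f(x)\,dx \sim \frac{t\,f(t)}{(\gamma+1) - 1} = \frac{t\, l(t)\, t^{-\gamma-1}}{\gamma} = \frac{l(t)}{\gamma}\, t^{-\gamma},
\]
and the same asymptotic holds for $\int_{t+1}^\infty f(x)\,dx$, since a tail integral of a regularly-varying function is again regularly varying (here of index $-\gamma$) and is therefore insensitive to a unit shift of the lower limit. The sandwich from the previous step then forces $\Prob{X > t} \sim \tfrac{l(t)}{\gamma}\, t^{-\gamma}$, first along integer $t$ and then for real $t$ by monotonicity of the tail $\Prob{X > t}$ in $t$ together with the slow variation of $l$. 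Finally, setting $\tilde{l}(t) = t^{\gamma}\,\Prob{X > t}$ produces a slowly-varying function with $\tilde{l}(t) \sim l(t)/\gamma$, which is exactly the ``in particular'' clause.

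I expect the only genuine obstacle to be the discrete-to-continuous passage in the first step, which is where the eventual-monotonicity assumption earns its keep; once the sum has been replaced by the integral, the rest is a routine invocation of Karamata together with the stability of regular variation under tail integration and a unit shift.
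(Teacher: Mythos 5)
Your overall strategy is the same as the paper's: compare the tail sum with a tail integral using monotonicity, apply Karamata's theorem to the integral, and absorb the boundary and shift terms; the ``in particular'' clause via $\tilde{l}(t) = t^\gamma \Prob{X > t}$ is identical as well. The Karamata step, the insensitivity of the tail integral to a unit shift of the lower limit, and the passage from integer to real $t$ are all correct.

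The genuine gap is your claim that ``since $l$ is eventually monotone and $x^{-\gamma-1}$ is decreasing, the product $f$ is eventually monotone (in fact eventually decreasing).'' This implication holds only when $l$ is eventually \emph{decreasing}, where $f$ is a product of two positive decreasing functions. When $l$ is eventually increasing, a product of an increasing and a decreasing function need not be monotone, and the conclusion can genuinely fail: let $l$ increase by a factor $1 + 1/k$ across a sufficiently tiny interval starting at $a_k = e^{e^k}$ and stay constant otherwise; such an $l$ is increasing and slowly varying, yet $f(x) = l(x)x^{-\gamma-1}$ rises across each of the infinitely many ramps (the jump factor beats the decay of $x^{-\gamma-1}$ over a short enough interval), so $f$ is not eventually monotone and your sandwich $\int_{k}^{k+1} f \le f(k) \le \int_{k-1}^{k} f$ is unavailable. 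The paper is careful on exactly this point: it runs the direct sandwich only for eventually decreasing $l$ and treats the increasing case by a modified comparison --- there one uses $l(k) \le l(x)$ for $x \in [k, k+1]$ to get $\int_k^{k+1} f(x)\,dx \ge f(k)\left(\tfrac{k}{k+1}\right)^{\gamma+1}$ and $\int_{k-1}^{k} f(x)\,dx \le f(k)\left(\tfrac{k}{k-1}\right)^{\gamma+1}$, i.e.\ the sandwich holds up to factors $1 + O(1/t)$ uniformly in $k > t$, which suffices in the limit. Your parenthetical remark about the uniform convergence theorem is in fact a complete repair: it yields $\int_k^{k+1} f(x)\,dx \sim f(k)$ with no monotonicity at all, and tail sums of asymptotically equivalent positive sequences are asymptotically equivalent. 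If you promote that remark to the main argument, the proof is correct; as written, however, the primary route fails for eventually increasing $l$, which is precisely where you assert the monotonicity assumption ``earns its keep.''
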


\begin{proof}
To prove the result we will bound the sums $\sum_{k > t}^\infty \Prob{X = k}$ by integrals, using that $l(x)$ is eventually monotone. We will assume that $l(x)$ is eventually monotonic decreasing. The proof for monotonic increasing $l(x)$ is similar.

First observe that
\[
	\Prob{X > t} \sim \frac{l(t)}{\gamma} t^{-\gamma},
\]
implies that $\tilde{l}(x) = x^\gamma \Prob{X > x}$ is slowly varying and $\tilde{l}(x) \sim l(x)/\gamma$, which proves the second statement.

For the first statement, let $K$ be the smallest integer such that for all $y \ge x \ge K$, $l(x) \ge l(y)$. In addition define for all $x \in [1,\infty)$, $f(x) = l(x)x^{-\gamma-1}$. Then for all $t \ge K + 1$
\begin{align*}
	\sum_{k = \ceil{t}}^\infty \Prob{X = k}
	&= \Prob{X = \ceil{t}} + \sum_{k = \ceil{t} + 1} \int_{k-1}^k \Prob{X = k} \, dx \\
	&\le \Prob{X = \ceil{t}} + \sum_{k = \ceil{t} + 1} \int_{k-1}^k f(x) \, dx \\
	&= \Prob{X = \ceil{t}} + \int_{\ceil{t}}^\infty f(x) \, dx.
\end{align*}
Similarly, we have for all $t \ge K + 1$,
\[
	\sum_{k = \ceil{t}}^\infty \Prob{X = k} = \sum_{k = \ceil{t}}^\infty \int_{k}^{k+1} \Prob{X = k} \, dx \ge \int_{\ceil{t}}^\infty f(x) \, dx.
\]
We therefore obtain
\begin{equation}\label{eq:bounds_cdf_integral}
	1 \le \frac{\sum_{k = \ceil{t}}^\infty \Prob{X = k}}{\int_{\ceil{t}}^\infty f(x) \, dx} 
	\le 1 + \frac{\Prob{X = \ceil{t}}}{\int_{\ceil{t}}^\infty f(x) \, dx}.
\end{equation}
By Karamata's theorem (\cite[1.5.11]{Bingham1989}) it follows that, as $t \to \infty$.
\begin{equation}\label{eq:karamata}
	\int_t^\infty f(x) \, dx \sim \frac{l(t) t^{-\gamma}}{\gamma}.
\end{equation}
Since $\Prob{X = \ceil{t}} = o\left(l(t)t^{-\gamma}\right)$, it follows from \eqref{eq:bounds_cdf_integral}
and \eqref{eq:karamata} that, as $t \to \infty$,
\begin{align*}
	\frac{\gamma\Prob{X > t}}{l(t)t^{-\gamma}} &= \left(\frac{\gamma \int_{\ceil{t}}^\infty f(x) \, dx}{l(t) 	
		t^{-\gamma}}\right)\left(\frac{\sum_{k = \ceil{t}}^\infty \Prob{X = k}}{ \int_{\ceil{t}}^\infty f(x) \, dx}\right)
	\sim 1,
\end{align*}
which finishes the proof.
\end{proof}

\subsection{Proof of Theorem \ref{thm:convergence_degree_densities}}\label{ssec:proof_convergence_degree_densities}

The proof we present here is an adjustment of \cite[Theorem 3.1]{Hoorn2016b} to the setting with
finite variance. It uses the following technical lemma, which is a direct consequence of the Burkholder's
inequality. See for instance \cite{Hoorn2016b} for a short proof.

\begin{lemma}[Lemma 3.5 \cite{Hoorn2016b}]\label{lem:burkholder}
Let $\{X_i\}_{i \ge 1}$ be a sequence of i.i.d. zero mean random variables such that $\Exp{|X_1|^{p}} < \infty$,
for some $1 < p < 2$. Then there exists a constant $K > 0$, which only depends on $p$, such that
\[
	\Exp{\left|\sum_{i = 1}^n X_i\right|^{p}\,} \le K n \Exp{|X_1|^{p}}.
\]
\end{lemma}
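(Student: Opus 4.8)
The plan is to recognize the partial sums as a martingale, invoke Burkholder's inequality directly, and then exploit the concavity of the map $x \mapsto x^{p/2}$, which is exactly where the restriction $p < 2$ becomes essential. Since the $X_i$ are i.i.d. with $\Exp{X_1} = 0$, the partial sums $S_n = \sum_{i=1}^n X_i$ form a martingale with respect to the natural filtration $\mathcal{F}_n = \sigma(X_1, \dots, X_n)$, and the martingale differences are simply $d_i = X_i$. Burkholder's inequality then supplies a constant $C_p$, depending only on $p$, such that
\[
	\Exp{\abs{S_n}^p} \le C_p \, \Exp{\left(\sum_{i=1}^n X_i^2\right)^{p/2}}.
\]

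The second step carries the whole argument. Because $1 < p < 2$ we have $0 < p/2 < 1$, so $x \mapsto x^{p/2}$ is concave and hence subadditive on $[0,\infty)$; applying subadditivity to the nonnegative summands $X_i^2$ gives
\[
	\left(\sum_{i=1}^n X_i^2\right)^{p/2} \le \sum_{i=1}^n \left(X_i^2\right)^{p/2} = \sum_{i=1}^n \abs{X_i}^p.
\]
Taking expectations and using that the $X_i$ are identically distributed then yields $\Exp{\abs{S_n}^p} \le C_p \sum_{i=1}^n \Exp{\abs{X_i}^p} = C_p \, n \, \Exp{\abs{X_1}^p}$, which is the claim with $K = C_p$.

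There is no genuine obstacle here; the content is entirely in the two displayed inequalities, and both are short. The one point that requires care is that the subadditivity bound fails for $p > 2$, where $x \mapsto x^{p/2}$ is convex, so the hypothesis $p < 2$ cannot be dropped in this approach; indeed for $p \ge 2$ the sharp bound carries an extra factor growing with $n$. I would also remark that the conclusion can be obtained without martingale theory via the von Bahr--Esseen inequality, namely $\Exp{\abs{\sum_{i=1}^n X_i}^p} \le 2 \sum_{i=1}^n \Exp{\abs{X_i}^p}$ for independent mean-zero summands with $1 \le p \le 2$, which gives the result directly with $K = 2$; but since the paper already cites Burkholder's inequality, I would present the martingale route above.
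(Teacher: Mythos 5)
Your proof is correct and follows exactly the route the paper indicates: the paper does not reproduce a proof of this lemma (it cites Lemma~3.5 of the reference and describes the result as ``a direct consequence of Burkholder's inequality''), and your argument---Burkholder applied to the martingale of partial sums, followed by subadditivity of $x \mapsto x^{p/2}$ for $p<2$ and the i.i.d.\ assumption---is precisely that direct consequence. Your closing remarks are also accurate: the restriction $p<2$ is what makes the square function collapse to $\sum_i |X_i|^p$, and the von Bahr--Esseen inequality would give the same conclusion with an explicit constant without any martingale machinery.
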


\begin{proof}[Proof of Theorem \ref{thm:convergence_degree_densities}]
Let $\varepsilon \le \eta/(4 + 2\eta)$, define the events
\[
	A_n = \left\{d_1(f_n,f) \le n^{-\varepsilon}\right\}, \quad B_n = \left\{d_1(f_n^\ast, f^\ast) \le n^{-\varepsilon}\right\}
\]
and note that $\Prob{\Omega_n^c} \le \Prob{A_n^c} + \Prob{A_n \cap B_n^c}$. Therefore it is enough to show that
\[
	\Prob{A_n^c} + \Prob{A_n \cap B_n^c} = O\left(n^{-\varepsilon}\right),
\]
as $n \to \infty$.

Since $\Exp{\mathscr{D}^{1 + \eta}} < \infty$ we use
\cite[Proposition 4]{Chen2015}, with $\alpha = 1 + \eta$, to obtain
\[
	\Prob{A_n^c} \le n^{\varepsilon} \Exp{d_1(f_n,f)} \le n^{\varepsilon - 1 + \frac{1}{1+\eta}}\left(\frac{2(1 + \eta)}{\eta}
	+ \frac{2}{1 - \eta}\right) \Exp{\mathscr{D}^{1 + \eta}} = O\left(n^{-\varepsilon}\right),
\]
where the last part follows from the fact that $\varepsilon - 1 + 1/(1 + \eta) < -\varepsilon$.

For the remaining probability, define
\[
	X_{ik} = D_i \ind{D_i > k} - \Exp{\mathscr{D} \ind{\mathscr{D} > k}},
\]
and write
\begin{align*}
	d_1(f_n^\ast, f^\ast) &= \sum_{k = 0}^\infty \left| \frac{1}{L_n}\sum_{i = 1}^n D_i \ind{D_i > k} - \Exp{\mathscr{D} \ind{\mathscr{D} > k}}\right|\\
	&\le \frac{1}{\nu n} \sum_{k = 0}^\infty \left|\sum_{i = 1}^n X_{ik}\right| + \left|\frac{1}{L_n} - \frac{1}{\nu n}\right| \sum_{k = 0}^\infty D_i \ind{D_i \ge k} \\
	&\le \frac{1}{\nu n} \sum_{k = 0}^\infty \left|\sum_{i = 1}^n X_{ik}\right| + \frac{|L_n - \nu n|}{\nu n}.
\end{align*}
Since on the event $A_n$ we have $|L_n - \nu n| \le n^{1 - \varepsilon}$ we get
\[
	\Prob{\frac{|L_n - \nu n|}{\nu n} > \frac{2n^{-\varepsilon}}{\nu + 1}, A_n} = \Prob{\frac{n^{-\varepsilon}}{\nu} > \frac{2n^{-\varepsilon}}{\nu + 1}, A_n} = 0,
\]
and hence
\begin{align*}
	\Prob{A_n \cap B_n^c} &= \Prob{d_1(f_n^\ast, f^\ast) > n^{-\varepsilon}, A_n} \\
	&\le \Prob{\frac{1}{\nu n} \sum_{k = 0}^\infty \left|\sum_{i = 1}^n X_{ik}\right| > \frac{(\nu-1) n^{-\varepsilon}}{\nu + 1}}
	+ \Prob{\frac{|L_n - \nu n|}{\nu n} > \frac{2n^{-\varepsilon}}{\nu + 1}, A_n} \\
	&= \Prob{\frac{1}{\nu n} \sum_{k = 0}^\infty \left|\sum_{i = 1}^n X_{ik}\right| > \frac{(\nu-1) n^{-\varepsilon}}{\nu + 1}}.
\end{align*}
For the last term let $p = 1/(1 - 2\varepsilon)$ and note that since $0 < \eta < 1$ and $0 < \varepsilon \le \eta/(2\eta + 4)$ we have
$1 < p \le 1 + \eta/2$. Therefore, by first applying Markov's inequality, followed by
H\"{o}lder's inequality and then Lemma \ref{lem:burkholder}, we get for some $K > 0$ depending only on $\varepsilon$,
\begin{align*}
	\Prob{\frac{1}{\nu n} \sum_{k = 0}^\infty \left|\sum_{i = 1}^n X_{ik}\right| > \frac{\nu n^{-\varepsilon}}{\nu + 1}}
	&\le \frac{\nu + 1}{\nu^2 n^{1 - \varepsilon}}\sum_{k = 0}^\infty \Exp{\left|\sum_{i = 1}^n X_{ik}\right|} \\
	&\le \frac{\nu + 1}{\nu^2 n^{1 - \varepsilon}}\sum_{k = 0}^\infty \Exp{\left|\sum_{i = 1}^n X_{ik}\right|^p \,}^{\frac{1}{p}}\\
	&\le \frac{\nu + 1}{\nu^2 n^{1 - \varepsilon}}\sum_{k = 0}^\infty \left(K n\Exp{ \left|X_{1k}\right|^p \,}\right)^{\frac{1}{p}}\\
	&\le \frac{K^{\frac{1}{p}}n^{\frac{1}{p}}(\nu + 1)}{\nu^2 n^{1 - \varepsilon}}\sum_{k = 0}^\infty \Exp{\left|X_{1k}\right|^p \,}^{\frac{1}{p}}\\
	&\le \frac{2 K^{\frac{1}{p}}n^{\frac{1}{p}}(\nu + 1)}{\nu^2 n^{1 - \varepsilon}}\sum_{k = 0}^\infty \Exp{\mathscr{D}^p \ind{\mathscr{D} > k}}^{\frac{1}{p}}.
\end{align*}
To finish the argument we write
\begin{align*}
	\Exp{\mathscr{D}^p \ind{\mathscr{D} > k}} &= \Exp{\mathscr{D}^p \ind{\mathscr{D} > k}}\ind{k < 1}
		+ \Exp{\mathscr{D}^p \ind{\mathscr{D} > k}}\ind{k \ge 1}\\
	&\le \Exp{\mathscr{D}^{p + 1}}\ind{k < 1} + k^{-p}\Exp{\mathscr{D}^{2p}}\ind{k \ge 1},
\end{align*}
so that, using $\Gamma(s)$ to denote the Gamma function,
\begin{align*}
	\frac{2 K^{\frac{1}{p}}n^{\frac{1}{p}}(\nu + 1)}{\nu^2 n^{1 - \varepsilon}}\sum_{k = 0}^\infty \Exp{\mathscr{D}^p \ind{\mathscr{D} > k}}^{\frac{1}{p}}
	&\le \frac{2 K^{\frac{1}{p}}n^{\frac{1}{p}}(\nu + 1)}{\nu^2 n^{1 - \varepsilon}} \left(\Exp{\mathscr{D}^{p + 1}}
		+ \Exp{\mathscr{D}^{2p}} \sum_{k = 1}^\infty k^{-p}\right)\\
	&= n^{\frac{1}{p} + \varepsilon - 1} \frac{2 K^{\frac{1}{p}}(\nu + 1)}{\nu^2} \left(\Exp{\mathscr{D}^{p + 1}}
			+ \Exp{\mathscr{D}^{2p}} \Gamma(p)\right)\\
	&= O\left(n^{\frac{1}{p} + \varepsilon - 1}\right) = O\left(n^{-\varepsilon}\right),
\end{align*}
where we used that $\Exp{\mathscr{D}^{p + 1}} \le \Exp{\mathscr{D}^{2p}} \le \Exp{\mathscr{D}^{2 + \eta}} < \infty$
and $1/p + \varepsilon - 1 = -\varepsilon$.
\end{proof}

\subsection{ANND in general graphs}\label{ssec:proof_annd_general_random_graphs}
\begin{proof}[Proof of Theorem \ref{thm:convergence_annd}]
Let  $\Omega_n$ and $\Gamma_n$ be as defined in Assumptions \ref{asmp:regularity_degrees} and
\ref{asmp:regularity_structure}, respectively. Define $\Lambda_n = \Omega_n \cap \Gamma_n$. Then by the assumptions we have that
\begin{equation}
\label{eq:lim_prob_lambdan}
  \lim_{n\To \infty} \Prob{\Lambda_n} = 1,
\end{equation}
and hence it is enough to prove the result conditioned on the event $\Lambda_n$. For this we first split
$\left|\Phi(k)-{\Phi}_n(k)\right|$ into two terms as follows:
\begin{align*}
	\abs{\Phi(k)-{\Phi}_n(k)}1_{\{f_n(k)>0\}}
	&\leq \ind{f_n(k)>0}\abs{\frac{1}{f_n^*(k)}-\frac{1}{f^*(k)}}\sum_{\ell = 1}^{\infty}
	  h_n(k,\ell)\ell \\
	&\hspace{10pt}+ \ind{f^*(k)>0} \frac{1}{f^*(k)}
	  \abs{\sum_{\ell=1}^{\infty} h_n(k,\ell)\ell-h(k,\ell)\ell}:= \Xi_n^{(1)} + \Xi_n^{(2)}
\end{align*}

We obtain a bound for $\Xi_n^{(1)}$ on $\Lambda_n$ by bounding both multiplicative terms in $\Xi_n^{(1)}$. To this end, first, on $\Gamma_n$ we obtain
\begin{align*}
	\sum_{\ell = 1}^{\infty} h_n(k,\ell)\ell
	&\le \sum_{k, \ell = 1}^{\infty} h(k,\ell)\ell
		+ \abs{\sum_{k, \ell = 1}^{\infty} h_n(k,\ell)\ell - h(k,\ell)\ell} \\
	 &\le \sum_{\ell = 1}^{\infty} f^\ast(\ell)\ell + d_1(f_n^\ast, f^\ast)
	= \Exp{\mathscr{D}^2} + n^{-\varepsilon}. \numberthis \label{eq:convergence_annd_11}
\end{align*}
Next we see that on $\Omega_n$,
\begin{equation}\label{eq:convergence_annd_12}
	\abs{\frac{1}{f_n^*(k)}-\frac{1}{f^*(k)}} = \frac{\abs{ f_n^*(k)-f^*(k)}}{f_n^*(k) f^*(k)}
	\le \frac{d_1(f_n^\ast, f^\ast)}{f^\ast(k)^2 - f^\ast(k) n^{-\varepsilon}}
	\le \frac{n^{-\varepsilon}}{f^\ast(k)^2 - f^\ast(k) n^{-\varepsilon}}.
\end{equation}

Combining \eqref{eq:convergence_annd_11} and \eqref{eq:convergence_annd_12}, we obtain
\begin{equation}\label{eq:convergence_annd_bound_term1}
  \Xi_n^{(1)} \le \frac{n^{-\varepsilon}}{f^\ast(k)^2 - f^\ast(k) n^{-\varepsilon}}(\Exp{\mathscr{D}^2} + n^{-\varepsilon})= O(n^{-\varepsilon}).
\end{equation}

In order to bound $\Xi_n^{(2)}$, we use a cut-off technique. Choose $w_n=\lfloor n^{p} \rfloor $, where $p$ is a positive constant to be
determined. Then we write
\begin{align*}
  \Xi_n^{(2)}\le \frac{\ind{f_n(k)>0}}{ f^\ast(k)}\abs{\sum_{\ell=1}^{w_n}h_n(k,\ell)l-h(k,\ell)\ell} 
  + \frac{1}{f^\ast(k)}\abs{\sum_{\ell>w_n}h_n(k,\ell)\ell-h(k,\ell)\ell }
  := \Xi_n^{(3)} + \Xi_n^{(4)}.
\end{align*}
To control $\Xi_n^{(3)}$, we use that on $\Gamma_n$, $\abs{\sum_{\ell=1}^{w_n}h_n(k,\ell)-h(k,\ell)}
\leq n^{-\kappa}$, so that on $\Lambda_n$,
\begin{equation}\label{eq:convergence_annd_bound_21}
  \Xi_n^{(3)}
  \le \frac{w_n \abs{\sum_{\ell=1}^{w_n}h_n(k,\ell)-h(k,\ell)}}{f^\ast(k)}
  \le \frac{n^{-\kappa+p}}{f^\ast(k)}.
 \end{equation}

For $\Xi_n^{(4)}$, we use that on $\Omega_n$ we have
\begin{align*}
	\frac{1}{f^\ast(k)}\abs{\sum_{\ell > w_n} h_n(k,\ell)\ell - h(k,\ell)\ell}
	&\le \frac{w_n^{-\eta}}{f^\ast(k)} \left(
		\sum_{\ell > w_n} h_n(k,\ell)\ell^{1 + \eta} + h(k,\ell)\ell^{1 + \eta}\right).
\end{align*}
Now, we obtain
\[
	\sum_{\ell > w_n} h(k,\ell)\ell^{1 + \eta}
	\le \sum_{k, \ell > 0} h(k,\ell)\ell^{1 + \eta}
	= \sum_{\ell > 0} f^\ast(\ell) \ell^{1 + \eta}
	= \frac{1}{\nu_1} \sum_{\ell = 0}^{\infty} f(\ell)\ell^{2 + \eta}
	= \frac{\Exp{\mathscr{D}^{2 + \eta}}}{\nu_1}.
\]
Next, on $\Omega_n$ we have that
\begin{align*}
	\sum_{\ell > w_n} h_n(k,\ell)\ell^{1 + \eta}
	&\le \sum_{k, \ell > 0} h_n(k,\ell)\ell^{1 + \eta} = \sum_{\ell > 0} f_n^\ast(\ell)\ell^{1 + \eta} \\
	&= \frac{1}{L_n} \sum_{\ell > 0}\ell^{2 + \eta} \sum_{i = 1}^n \ind{D_i = \ell}\le \frac{1}{\nu_1 n - n^{1-\varepsilon}} \sum_{i = 1}^n D_i^{2 + \eta}.
\end{align*}
Therefore, using Markov's inequality, we have, for all $0<c<1$,
\begin{align*}
	\Prob{\Xi_n^{(4)} > c n^{-\delta}, \Lambda_n}
	&\le \frac{n^\delta w_n^{-\eta}}{c f^\ast(k)}\left(\frac{\Exp{\mathscr{D}^{2 + \eta}}}{\nu_1 - n^{-\varepsilon}} + \frac{\Exp{\mathscr{D}^{2 + \eta}}}{\nu_1}\right)\\
	&= \frac{n^{\delta} w_n^{-\eta}}{c f^\ast(k)(\nu_1 - n^{-\varepsilon})}(2\Exp{\mathscr{D}^{2 + \eta}}+n^{-\varepsilon})
	= O\left(n^{\delta - p\eta}\right). \numberthis \label{eq:convergence_annd_bound_22}
\end{align*}
We now use a standard bound to obtain
\begin{align*}
\Prob{\abs{\Phi(k) - \Phi_n(k) }> n^{-\delta},\Lambda_n}&\le \Prob{\Xi_n^{(1)}> \frac{1}{3}\, n^{-\delta},\Lambda_n}\\&+\Prob{\Xi_n^{(3)}> \frac{1}{3}\, n^{-\delta},\Lambda_n}+\Prob{\Xi_n^{(4)}> \frac{1}{3}\, n^{-\delta},\Lambda_n}.
\end{align*}
It follows from \eqref{eq:convergence_annd_bound_term1}, \eqref{eq:convergence_annd_bound_21} and \eqref{eq:convergence_annd_bound_22} that whenever $p<\kappa$ and
\begin{equation}\label{eq:convergence_annd_condition_delta}
  0 < \delta < \min\{\varepsilon,\kappa-p, p\eta\},
\end{equation}
we have
\begin{equation}
\label{eq:lim_phin_lambdan}
\lim_{n \to \infty} \Prob{\abs{\Phi(k) - \Phi_n(k) }> n^{-\delta},\Lambda_n} = 0.
\end{equation}
Finally, set $p = \kappa/(\eta + 1) > 0$ so that $\kappa-p = p\eta$. Then \eqref{eq:convergence_annd_condition_delta} holds for all $\delta <\min\left\{\varepsilon, \frac{\kappa \eta}{\eta+1}\right\}$, and the result follows from \eqref{eq:lim_phin_lambdan} and \eqref{eq:lim_prob_lambdan}.
\end{proof}

\subsection{ANND in the configuration model}
\label{sec:proofs_cm}

\begin{proof}[Proof of Theorem \ref{thm:convergence_annd_cm_strong}]
Let $\Omega_n$ be defined as in Assumption \ref{asmp:regularity_degrees} with $\varepsilon = \delta/2 \le
\eta/(4 + 2\eta)$. 

First, observe that
\begin{equation*}
\frac{\nu_2}{\nu_1} =\sum_{\ell=1}^{\infty}f^*(\ell)\ell\,.
\end{equation*}
Hence we get
\begin{align}
	\abs{\Phi_n(k)-\frac{\nu_2}{\nu_1}}1_{\{f_n(k)>0\}} &=\frac{1}{f_n^*(k)} \abs{\sum_{\ell>0=1}^{\infty}(h_n(k,\ell)-f_n^*(k)
		f^*(\ell))\ell}\ind{f_n(k)>0} \notag \\
	&\leq \frac{1}{f_n^*(k)} \abs{\sum_{\ell=1}^{\infty} (h_n(k,\ell)-f_n^*(k) f_n^*(\ell))\ell}
		\ind{f_n(k) > 0} \label{eq:cm_convergence_hn_vs_fn}\\
	&\hspace{10pt}+ \abs{\sum_{\ell=1}^{\infty} (f_n^*(\ell) -f^*(\ell))\ell}
		\label{eq:cm_convergence_fn_vs_f}.
\end{align}
Term \eqref{eq:cm_convergence_fn_vs_f} is independent of $k$ and satisfies
\[
	\abs{\sum_{\ell=1}^{\infty} (f_n^*(\ell) -f^*(\ell))\ell} \le d_1\left(f_n^\ast, f^\ast\right).
\]
Therefore we have that
\[
	\Prob{\abs{\sum_{\ell=1}^{\infty} (f_n^*(\ell) -f^*(\ell))\ell} > \frac{n^{-\delta}}{2}, \Omega_n} = O\left(n^{-\delta}\right).
\]

We are now left with \eqref{eq:cm_convergence_hn_vs_fn}, which requires a bit more work.
We will prove that there exists a constant $C$, independent of $n$ and $k$, such that
\begin{equation}\label{eq:convergence_annd_conditioned_omega}
	\Prob{\left|\Phi_n(k) - \frac{\nu_2}{\nu_1}\right|\ind{f_n(k) > 0} > n^{-\delta}, \Omega_n}
	\le C \left(n^{\delta - \frac{\eta}{2(\eta + 2)}}\right).
\end{equation}
This together with Theorem~\ref{thm:convergence_degree_densities} will give the result.

Recall that $G_{ij}$ is the number of edges from $i$ to $j$ where self-loops are counted twice. Let us now define
\begin{equation}\label{eq:convergence_cm_Xij_def}
  X_{ij}(k,\ell)=1_{\{D_i=k,D_j=\ell\}}\left(\frac{G_{ij}}{L_n} - \frac{D_iD_j}{{L_n}^2}\right)\,.
\end{equation}
Then we have that
\begin{align}
\abs{h_n(k,\ell)-f_n^\ast(k)f_n^\ast(\ell)} &=\abs{\sum_{i,j=1}^nX_{ij}(k,\ell)}. \label{eq:diff_hn_h_Xij}
\end{align}
Now we will again use a cut-off technique. Let $p = 1/(2(\eta + 2))$, denote $w_n = \floor{n^p}$ and use
\eqref{eq:diff_hn_h_Xij} to bound \eqref{eq:cm_convergence_hn_vs_fn} as follows:
\begin{align*}
	&\hspace{-30pt}\frac{1}{f_n^\ast(k)} \abs{\sum_{\ell=1}^{\infty} (h_n(k,\ell)-f_n^\ast(k)
		f_n^\ast(\ell))\ell} \ind{f_n(k) > 0}\\
	&\le \frac{\ind{f_n(k) > 0}}{f_n^\ast(k)} \sum_{\ell=1}^\infty
		\abs{\sum_{i, j = 1}^n \ell X_{ij}(k,\ell)} \\
	&\le \frac{\ind{f_n(k) > 0}}{f_n^\ast(k)} \sum_{\ell=1}^{w_n}
		\abs{\sum_{i, j = 1}^n \ell X_{ij}(k,\ell)}
		+ \frac{\ind{f_n(k) > 0}}{f_n^\ast(k)} \sum_{\ell > w_n}
		\sum_{i, j = 1}^n D_j \abs{X_{ij}(k,\ell)}\\
	&:= \Xi_n^{(1)} + \Xi_n^{(2)}.
\end{align*}
We will bound the probability $\Prob{\Xi_n^{i} > \frac{n^{-\delta}}{4},\Omega_n}$ for $i = 1, 2$, separately, starting with $\Xi_n^{(2)}$. 
Observe that, if
$\mathbb{E}_n$ denotes the conditional expectation given the degree sequence, then it follows that
\begin{equation*}
 \mathbb{E}_n(G_{ij})=
\left\{
\begin{array}{ll}
\frac{D_iD_j}{L_n-1},&\textrm{if $i\neq j$},\\
 \frac{D_i(D_i-1)}{L_n-1},&\textrm{if $i = j$},
 \end{array}
  \right.
\end{equation*}
and hence by \eqref{eq:convergence_cm_Xij_def} we get
\begin{equation}
\mathbb{E}_n(\abs{X_{ij}(k,\ell)}) \leq 4\,\frac{D_iD_j}{{L_n}^2}1_{\{D_i=k,D_j=\ell\}}\,.
\end{equation}
Therefore we obtain
\begin{align*}
	\Prob{\Xi_n^{(2)} > \frac{n^{-\delta}}{4}, \Omega_n}
	&=\Prob{\frac{\ind{f_n(k) > 0}}{f_n^*(k)} \sum_{\ell > w_n}
		\sum_{i, j = 1}^n D_j \abs{X_{ij}(k,\ell)} > \frac{n^{-\delta}}{4}, \Omega_n} \\
	&\le 4n^{\delta} \sum_{\ell > w_n} \CExp{\frac{1}{f_n^*(k)} \ind{f_n(k)>0}
		\abs{\sum_{i,j=1}^n X_{ij}(k,\ell)D_j}}{\Omega_n} \\
	&\le 16n^{\delta} \sum_{i,j=1}^n \CExp{\frac{1}{f_n^*(k)} \ind{f_n(k)>0}
		\frac{D_i{D_j}^2}{{L_n}^2} \ind{D_j>w_n} \ind{D_i=k}}{\Omega_n} \\
	&= 16n^{\delta}\sum_{j=1}^{n} \CExp{\frac{D_j^2}{L_n}\ind{D_j >w_n}}{\Omega_n}\\
	&= 16n^{\delta+1} \CExp{\frac{D_1^2}{L_n}\ind{D_1>w_n}}{\Omega_n}  \\
	&\leq \frac{16 n^{\delta}}{\nu_1 - n^{-\varepsilon}} \Exp{D^2 \ind{D>w_n}}\\
	&\leq \frac{16}{\nu_1 - n^{-\varepsilon}} \Exp{D^{2+\eta}}n^{\delta}w_n^{-\eta}.
\end{align*}
From this it follows that there exists a constant $C_1 > 0$, independent of $k$,
such that
\begin{equation}\label{eq:cm_convergence_annd_bound_ell_ge_wn}
	\Prob{\Xi_{n}^{{2}} > \frac{n^{-\delta}}{4}, \Omega_n} \le C_1 n^{\delta - p \eta} = C_1 n^{\delta - \frac{\eta}{2(\eta + 2)}}.
\end{equation}
Finally we deal with $\Xi_n^{(1)}$. Using Markov's inequality and Cauchy-Schwartz
inequality, we get
\begin{align*}
	\Prob{\Xi_n^{(1)} > \frac{n^{-\delta}}{4}, \Omega_n}
	&= \Prob{\frac{\ind{f_n(k) > 0}}{f_n^\ast(k)} \sum_{\ell=1}^{w_n}
		\abs{\sum_{i, j = 1}^n \ell X_{ij}(k,\ell)} > \frac{n^{-\delta}}{4}, \Omega_n}\\
	&\le 4n^{\delta} \sum_{\ell=1}^{w_n}  \CExp{\frac{1}{f_n^\ast(k)} \ind{f_n(k)>0}
		\abs{\sum_{i,j=1}^n X_{ij}(k,\ell)\ell}}{\Omega_n} \\
	&= 4n^{\delta} \sum_{\ell=1}^{w_n} \ell \, \CExp{\frac{1}{f_n^\ast(k)} \ind{f_n(k)>0}
		\Expn{ \abs{\sum_{i,j=1}^n X_{ij}(k,\ell)}}}{\Omega_n}\\
	&\le 4n^{\delta} \sum_{\ell=1}^{w_n} \ell \, \CExp{\frac{1}{f_n^*(k)} \ind{f_n(k)>0}
		\Expn{\abs{\sum_{i,j=1}^n X_{ij}(k,\ell)}^2}^{1/2}}{\Omega_n}
\end{align*}
In order to bound  $\Expn{\abs{\sum_{i,j=1}^n X_{ij}(k,\ell)}^2}^{1/2}$ we use Lemma~\ref{lem:variance_Xij_kl} and, in particular, its Corollary \ref{cor:variance_Xij_kl} below.
Although Lemma \ref{lem:variance_Xij_kl} is crucial for the current proof, its own proof is quite technical. Hence, we will postpone it till the end of this section.

Invoking the result from Corollary~\ref{cor:variance_Xij_kl} we get a constant $C_2$,
independent of $k$ such that
\begin{align*}
	&\hspace{-30pt}n^{\delta} \sum_{\ell=1}^{w_n} \ell \, \CExp{\frac{1}{f_n^\ast(k)} \ind{f_n(k)>0}
		\Expn{\abs{\sum_{i,j=1}^n X_{ij}(k,\ell)}^2}^{1/2}}{\Omega_n} \\
	&\le C_2 n^{\delta} \sum_{\ell=1}^{w_n} \ell \, \CExp{\frac{1}{\sqrt{L_n}}}{\Omega_n}\le \frac{C_2 n^{\delta - \frac{1}{2}} w_n^2}{\sqrt{\nu_1 - n^{-\varepsilon}}}.
\end{align*}
Hence, there exists a constant $C_3 > 0$, independent of $k$ such that
\begin{equation}\label{eq:cm_convergence_annd_bound_ell_le_wn}
	\Prob{\Xi_n^{(1)} > \frac{n^{-\delta}}{4}, \Omega_n}
	\le C_3 n^{\delta + 2p - \frac{1}{2}}
	 = C_3 n^{\delta - \frac{\eta}{2(\eta + 2)}}.
\end{equation}

Combining \eqref{eq:cm_convergence_annd_bound_ell_le_wn} and
\eqref{eq:cm_convergence_annd_bound_ell_ge_wn} we get
such that
\begin{align*}
	\Prob{\abs{\Phi_n(k) -\frac{\nu_2}{\nu_1}} > n^{-\delta} ,\Omega_n}
	&\le (C_1 + C_3) n^{\delta -\frac{\eta}{2(\eta + 2)}},
\end{align*}
which proves \eqref{eq:convergence_annd_conditioned_omega}.
\end{proof}

We now proceed with the proof of Lemma~\ref{lem:variance_Xij_kl} which is a stronger version of Lemma 6.3 in~\cite{Hoorn2016b}.
\begin{lemma}\label{lem:variance_Xij_kl}
Let $X_{ij}(k,\ell)$ be as defined in \eqref{eq:convergence_cm_Xij_def}. Then there exist a constant $C > 0$ such that, for any $k, \ell > 0$,
\begin{equation}
\label{eq:variance_Xij_kl}
	\Expn{\left(\sum_{i,j = 1}^n X_{ij}(k,\ell)\right)^2} \le \frac{C f_n^\ast(k)^2 f_n^\ast(\ell)^2}{L_n}.
\end{equation}
\end{lemma}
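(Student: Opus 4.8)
The plan is to condition on the degree sequence, so that the only remaining randomness is the uniform random pairing of the $L_n$ stubs, and to reduce everything to a second‑moment computation for this pairing. Writing $U$ for the set of the $kN_k = L_n f_n^\ast(k)$ stubs attached to nodes of degree $k$ and $V$ for the set of the $\ell N_\ell = L_n f_n^\ast(\ell)$ stubs attached to nodes of degree $\ell$, and letting $\ind{a \leftrightarrow b}$ denote the event that stubs $a$ and $b$ are paired, one has $\sum_{i \to j}\ind{D_i=k,\,D_j=\ell}G_{ij} = \sum_{a\in U,\,b\in V,\,a\ne b}\ind{a\leftrightarrow b}=:S$, so that
\begin{equation*}
  \sum_{i,j=1}^n X_{ij}(k,\ell) = \frac{S}{L_n} - f_n^\ast(k)f_n^\ast(\ell).
\end{equation*}
I would treat the cases $k\ne\ell$ (so $U\cap V=\emptyset$) and $k=\ell$ (so $U=V$) separately, since the overlap of the two stub sets changes the combinatorics of the diagonal and self‑pairing terms.

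Next I would compute the first two conditional moments of $S$ from the elementary matching probabilities $\Expn{\ind{a\leftrightarrow b}} = 1/(L_n-1)$ for distinct stubs and $\Expn{\ind{a\leftrightarrow b}\ind{c\leftrightarrow d}} = 1/((L_n-1)(L_n-3))$ for four distinct stubs, together with the observation that whenever the unordered pairs $\{a,b\}$ and $\{c,d\}$ share exactly one stub the product vanishes (a stub has a unique partner). Expanding $\Expn{S^2}$ over ordered quadruples $(a,c)\in U^2$, $(b,d)\in V^2$ then splits into the contribution of coinciding pairs, which reproduces $\Expn{S}$, and the all‑distinct contribution. For $k\ne\ell$, with $u=L_nf_n^\ast(k)$ and $v=L_nf_n^\ast(\ell)$, this yields $\Expn{S}=uv/(L_n-1)$ and $\Expn{S^2}=uv/(L_n-1)+u(u-1)v(v-1)/((L_n-1)(L_n-3))$.

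Finally I would substitute into
\begin{equation*}
  \Expn{\Big(\sum_{i,j}X_{ij}(k,\ell)\Big)^2}
  = \frac{1}{L_n^2}\Expn{S^2} - \frac{2f_n^\ast(k)f_n^\ast(\ell)}{L_n}\Expn{S} + f_n^\ast(k)^2f_n^\ast(\ell)^2
\end{equation*}
and collect terms. The strategy is to show that the three $O(f_n^\ast(k)^2f_n^\ast(\ell)^2)$ contributions cancel to leading order: the differences $1/(L_n-1)-1/L_n$ and $1/(L_n-3)-1/(L_n-1)$ each supply a factor $1/L_n$, and the surviving remainder is then bounded by $C f_n^\ast(k)^2 f_n^\ast(\ell)^2/L_n$ using $u,v\le L_n$, i.e. $f_n^\ast(k),f_n^\ast(\ell)\le 1$. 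The $k=\ell$ case is handled by the analogous expansion, replacing $uv$, $u(u-1)v(v-1)$ by $u(u-1)$, $u(u-1)(u-2)(u-3)$ and checking that the extra self‑pairing corrections are of the same or lower order.

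The main obstacle is precisely this final cancellation. Naive estimates such as $\Expn{S}\approx uv/L_n$ and $\Expn{S^2}\approx (uv)^2/L_n^2$ are not accurate enough: one must retain the $O(1/L_n)$ corrections in the pairing probabilities and show that, after the three leading contributions cancel, the dominant leftover is of the claimed order rather than of the larger order produced by the diagonal (self‑pairing) term in $\Expn{S^2}$. Controlling that diagonal term, and the analogous self‑loop corrections in the $k=\ell$ case, is the delicate step on which the whole estimate hinges.
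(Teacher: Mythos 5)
Your stub-level reduction is a genuinely different and cleaner route than the paper's: the paper instead expands $\Expn{\bigl(\sum_{i,j}X_{ij}(k,\ell)\bigr)^2}$ over node indices $(i,j,s,t)$, sorts the quadruples into seven coincidence patterns $I_1,\dots,I_7$, and estimates the resulting quantity $Y_{ijst}$ case by case. Your identity $\sum_{i,j}X_{ij}(k,\ell)=S/L_n-f_n^\ast(k)f_n^\ast(\ell)$ and your formulas for $\Expn{S}$ and $\Expn{S^2}$ are exact. But the proposal stops precisely where the proof has to be done: the last paragraph concedes that controlling the diagonal (self-pairing) term is ``the delicate step on which the whole estimate hinges'' and never carries it out. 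So what you have is a correct setup plus an unproven key estimate, i.e.\ a genuine gap.

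Moreover, that step cannot be carried out, because the diagonal term really does exceed the claimed bound. Writing $P=f_n^\ast(k)=u/L_n$, $Q=f_n^\ast(\ell)=v/L_n$ and substituting your two moments (case $k\ne\ell$) gives the exact identity
\begin{equation*}
\Expn{\Big(\sum_{i,j=1}^n X_{ij}(k,\ell)\Big)^2}
= \frac{PQ\,(L_n-u-v-2)}{(L_n-1)(L_n-3)}+\frac{(2L_n+3)\,P^2Q^2}{(L_n-1)(L_n-3)}.
\end{equation*}
The second term is the $O(P^2Q^2/L_n)$ remainder you predicted from the cancellation of the three leading contributions; the first term is the diagonal $uv/(L_n-1)$ of $\Expn{S^2}$, only partially offset by the $-u-v+1$ correction inside $u(u-1)v(v-1)$, and it is of order $PQ/L_n$ whenever $u+v\le L_n/2$. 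Since $PQ$ can be as small as $k\ell/L_n^2$, this is \emph{not} $O(P^2Q^2/L_n)$: with exactly one node of degree $k=2$, one node of degree $\ell=3$ and all remaining degrees equal to $1$, the left-hand side of \eqref{eq:variance_Xij_kl} is $6/L_n^3+O(L_n^{-4})$ while the right-hand side is $36C/L_n^5$, so the stated inequality fails for large $n$. What your computation, pushed to the end, actually proves is the weaker bound $C f_n^\ast(k)f_n^\ast(\ell)/L_n$ (first powers), and the paper's own argument fares no better: it verifies the claimed bound only in the cases $I_1$ (all indices distinct) and $I_2$ ($i=j\ne s\ne t$), where it does hold, and dismisses the remaining cases as ``similar''; but the case $I_6$ ($i=s\ne j=t$), whose dominant part (via $\Expn{G_{ij}^2}\ge\Expn{G_{ij}}$) is exactly your diagonal term, obeys only the first-power bound, and the self-loop cases $I_4$, $I_7$ behave likewise. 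So your hesitation pinpoints a real defect of the statement and of the paper's proof rather than of your method; with the corrected bound, Corollary \ref{cor:variance_Xij_kl} weakens to $Cf_n^\ast(k)/L_n$, and its use in the proof of Theorem \ref{thm:convergence_annd_cm_strong} (which needs $f_n^\ast(k)^{-1}\Expn{|\sum_{i,j}X_{ij}(k,\ell)|^2}^{1/2}=O(L_n^{-1/2})$ uniformly in $k$) would have to be reworked.
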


Using that $f_n^\ast(\ell) \le 1$, we immediately get the next corollary, which we used in the proof of Theorem \ref{thm:convergence_annd_cm_strong}.
\begin{corollary}\label{cor:variance_Xij_kl}
Let $X_{ij}(k,\ell)$ be as defined in \eqref{eq:convergence_cm_Xij_def}. Then there exists constant $C>0$, such that for any $k, \ell > 0$,
\begin{equation*}
\Expn{\left(\sum_{i,j=1}^n X_{ij}(k,\ell)\right)^2} \leq C\,\frac{f_n^\ast(k)^2}{L_n}\,.
\end{equation*}
\end{corollary}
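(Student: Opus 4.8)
The plan is to obtain the corollary as an immediate consequence of Lemma~\ref{lem:variance_Xij_kl}; the entire probabilistic content sits in that lemma, and all that remains is one elementary observation. Concretely, Lemma~\ref{lem:variance_Xij_kl} supplies, for a single constant $C > 0$ that does not depend on $k$ or $\ell$, the bound
\begin{equation*}
\Expn{\left(\sum_{i,j = 1}^n X_{ij}(k,\ell)\right)^2} \le \frac{C f_n^\ast(k)^2 f_n^\ast(\ell)^2}{L_n},
\end{equation*}
and I only need to discard the factor $f_n^\ast(\ell)^2$.

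To do this I would first recall that $f_n^\ast$ is a genuine probability mass function on the positive integers. Indeed, summing its definition over $\ell$ gives $\sum_{\ell \ge 1} f_n^\ast(\ell) = \frac{1}{L_n}\sum_{i \to j} 1 = 1$, and every term is nonnegative, so $0 \le f_n^\ast(\ell) \le 1$ for each $\ell$. Consequently $f_n^\ast(\ell)^2 \le 1$, and substituting this into the displayed bound yields
\begin{equation*}
\Expn{\left(\sum_{i,j=1}^n X_{ij}(k,\ell)\right)^2} \le \frac{C f_n^\ast(k)^2 f_n^\ast(\ell)^2}{L_n} \le \frac{C f_n^\ast(k)^2}{L_n},
\end{equation*}
with the same constant $C$ and uniformly in $k, \ell > 0$, which is exactly the claim.

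There is no genuine obstacle at this step: the only thing being exploited is that $f_n^\ast$ is a probability distribution, so the crude estimate $f_n^\ast(\ell) \le 1$ is available. The reason for recording this weaker inequality separately is purely for convenience in the proof of Theorem~\ref{thm:convergence_annd_cm_strong}, where one sums the square root of this quantity over $1 \le \ell \le w_n$; there it is the $\ell$-independence of the right-hand side, rather than its sharpness, that makes the resulting $w_n$-dependent bound clean. Thus the honest work is entirely deferred to Lemma~\ref{lem:variance_Xij_kl}, and the corollary follows immediately.
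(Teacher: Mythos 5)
Your proposal is correct and matches the paper's own argument exactly: the paper likewise deduces the corollary from Lemma~\ref{lem:variance_Xij_kl} by the single observation that $f_n^\ast(\ell) \le 1$, which holds since $f_n^\ast$ is a probability mass function. Your extra remark verifying $\sum_{\ell \ge 1} f_n^\ast(\ell) = 1$ is a harmless elaboration of the same step.
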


\begin{proof}[Proof of Lemma \ref{lem:variance_Xij_kl}]
Note we can assume that $f_n(k) >0$, otherwise by definition both sides of \eqref{eq:variance_Xij_kl} equal zero. To proceed, define
\begin{equation*}
  Y_{ijst}=\Expn{G_{ij}G_{st}}-\Expn{G_{ij}}\frac{D_sD_t}{L_n}
  -\Expn{G_{st}}\frac{D_iD_j}{L_n}+\frac{D_iD_jD_sD_t}{L_n^2}\,,
\end{equation*}
so that
\begin{equation*}
  \mathbb{E}_n((\sum_{i,j=1}^n X_{ij}(k,\ell))^2)= \frac{1}{L_n^2}\sum_{i,j=1}^n\sum_{s,t=1}^n
  1_{\{D_i=k\}}1_{\{D_j=\ell\}}1_{\{D_s=k\}}1_{\{D_t=\ell\}}Y_{ijst}\,.
\end{equation*}
We will prove the result by considering the different cases for the indices $i, j, s$ and $t$. In the rest of the proof we will write $i \ne j \ne s$ to denote that all 
indices in such inequality are pairwise distinct. Similarly, we write $i = j \ne s \ne t$ to denote all indices $(i,j,s,t)$, where $i = j$ and the three indices $i, s, t$ are pairwise distinct. With this notation we define the sets
\begin{align*}
	I_1 &= \{i,j,s,t : i \ne j \ne s \ne t \}\\
    I_2 &= \{i,j,s,t : i = j \ne s \ne t\} \\
    I_3 &= \{i,j,s,t : i = j = s \ne t\} \\
    I_4 &= \{i,j,s,t : i = j \ne s = t\} \\
    I_5 &= \{i,j,s,t : i = s \ne j \ne t\} \\
    I_6 &= \{i,j,s,t : i = s \ne j = t\}\\
  	I_7 &= \{i,j,s,t : i = j= s = t\}
\end{align*}
Due to symmetry, we can omit other possible cases (e.g. the set $i = j \ne s \ne t$ is equivalent to $i \ne j \ne s = t$). Therefore we only need to consider the sets given above.
Moreover, we can assume without loss of generality that $L_n \ge 4$ so that
\begin{equation}\label{eq:covariance_Xij_bounds_Ln}
	L_n - 1 \ge \frac{L_n}{2}, \quad L_n - 3 \ge \frac{L_n}{4} \quad \text{and} \quad
    2L_n + 3 \le 3L_n.
\end{equation}

\noindent {\bf Case 1:} $\bm{i \ne j \ne s \ne t}$\\
Since all indices are distinct have that
\[
	\Expn{G_{ij}G_{st}} = \frac{D_i D_j D_s D_t}{(L_n - 1)(L_n - 3)},
\]
and hence
\[
	Y_{ijst} = \frac{(2L_n + 3) D_i D_j D_s D_t}{L_n^2 (L_n - 1)(L_n - 3)}.
\]
Using the above and \eqref{eq:covariance_Xij_bounds_Ln} we obtain
\begin{align*}
  	&\hspace{-30pt}\frac{1}{L_n^2}\sum_{i,j,s,t \in I_1}
    	\ind{D_i=k,D_j=\ell,D_s=k,D_t=\ell} Y_{ijst}\\
  	&\le \frac{1}{L_n^2}\sum_{i,j,s,t=1}^n \ind{D_i=k,D_j=\ell,D_s=k,D_t=\ell}
  		\frac{(2L_n + 3) D_i D_j D_s D_t}{L_n^2 (L_n - 1)(L_n - 3)}\\
  	&\le 24\, \sum_{i,j,s,t=1}^n  \ind{D_i=k,D_j=\ell,D_s=k,D_t=\ell}
  		\frac{k^2 \ell^2}{L_n^5} \\
    &= 24\, \frac{1}{L_n^5} \left(\sum_{i = 1}^n \ind{D_i = k} k^2\right)
    	\left(\sum_{j = 1}^n \ind{D_j = \ell} \ell^2\right) \\
    &\le 24\, \frac{1}{L_n^5} \left(\sum_{i = 1}^n \ind{D_i = k} k\right)^2
    	\left(\sum_{j = 1}^n \ind{D_j = \ell} \ell\right)^2 \\
	&= 24\, \frac{f_n^\ast(k)^2 f_n^\ast(\ell)^2}{L_n}.
\end{align*}

\noindent {\bf Case 2:} $\bm{i = j \ne s \ne t}$\\
In this case we have
\[
	\Expn{G_{ij}G_{st}} = \frac{D_i(D_i - 1)D_s D_t}{(L_n - 1)(L_n - 3)} \quad \text{and} \quad
    Y_{ijst} = \frac{3D_i(D_i - 1)D_s D_t}{L_n(L_n - 1)(L_n - 3)}
    - \frac{D_i D_j D_s D_t}{L_n^2 (L_n - 1)}.
\]
In addition, since $i = j$ it follows that the sum over $I_2$ is non-zero if and only if $k = \ell$
and hence
\begin{align*}
	&\hspace{-30pt} \frac{1}{L_n^2} \sum_{i,j,s,t \in I_2} \ind{D_i=k,D_j=\ell,D_s=k,D_t=\ell}
    	Y_{ijst} \\
    &\le \sum_{i,s,t = 1}^n  \ind{D_i=k,D_s=k,D_t=\ell}
    	\frac{3D_i(D_i - 1)D_s D_t}{L_n^3(L_n - 1)(L_n - 3)}\ind{k=\ell} \\
    &\le \sum_{i,s,t = 1}^n  \ind{D_i = D_s = D_t = k = \ell}
    	\frac{24 D_i^2 D_s D_t}{L_n^5} \\
    &= \sum_{i,s,t = 1}^n  \ind{ D_i = D_s = D_t = k=\ell}
    	\frac{24 k^2 \ell^2}{L_n^5} \\
    &\le \frac{24 f_n^\ast(k)^2 f_n^\ast(\ell)^2}{L_n}\,\ind{k = \ell} .
\end{align*}
The computations for the other cases follows in a similar way,
from which the result follows.
\end{proof}

\begin{remark}
Note that we have only proved a pointwise convergence for every $k$. Unfortunately, we cannot directly extend the above method to derive a scaling for 
$\Prob{\sup_{f^\ast(k) > 0} \abs{\Phi_n(k) -\frac{\mu_2}{\mu_1}}> n^{-\delta}}$ because the obtained upper bounds, which are based on 
expectations, are not sharp enough to imply such uniform convergence.
\end{remark}

We proceed with the proof of the central limit theorem for the ANND in the configuration model with regularly-varying degree distribution. In what follows we
use that if $\mathscr{D}$ is regularly varying with exponent $1 < \gamma$, then $\Exp{\mathscr{D}^{1 + \eta}} < \infty$, for $\eta = (\gamma - 1)/2$. In particular
we have the a degree sequence ${\bf D}_n$ generated by $\texttt{IID}(\mathscr{D})$ satisfies Assumption \ref{asmp:convergence_distribution_first_moment}
with $\varepsilon \le (\gamma - 1)/4(\gamma + 3)$.

\begin{proof}[Proof of Theorem \ref{thm:annd_clt_configuration_model}]
By the stable-law CLT, there exists a slowly-varying function $l_0(n)$ such that
\begin{equation}\label{eq:annd_clt_stable_law}
	\frac{\sum_{i = 1}^n D_i^2}{l_0(n) n^{\frac{2}{\gamma}}}
    \dlim S_{\gamma/2},\qquad\mbox{as $n\to\infty$},
\end{equation}
where $S_{\gamma/2}$ is a ${\gamma/2}$-stable random variable.

Let $\Omega_n$ be as in Assumption \ref{asmp:convergence_distribution_first_moment},
with $\varepsilon = (\gamma - 1)/4(\gamma + 3)$. Define the events
\begin{align*}
	A_{nk} = \left\{f_n(k) > 0\right\}, \quad
    B_n = \left\{\sum_{i = 1}^n D_i^2 \le n^{\frac{2}{\gamma} + \frac{\varepsilon}{2}}\right\}.
\end{align*}
and let $\Lambda_{nk} = A_{nk} \cap B_n \cap \Omega_n$. Then $\lim_{n \to \infty} \sup_{k \le n^{\tau}} \Prob{A^c_{nk}} = 0$ by Theorem~\ref{thm:degree_sequence}, and it follows from \eqref{eq:annd_clt_stable_law}, c.f. \cite[Proposition 2.5]{Hoorn2016b}, that $\Prob{B_n} \to 1$,
as $n \to \infty$. Together with Theorem \ref{thm:convergence_distribution_first_moment} this implies that
\begin{equation}\label{eq:annd_clt_convergence_main_event}
	\lim_{n \to \infty} \sup_{k \le n^{\tau}} \Prob{\Lambda^c_{nk}} = 0.
\end{equation}
We now split the main term into three terms as follows:
\begin{align*}
	&\left|\Exp{g\left(\frac{\nu_1 \Phi_n(k)}{l_0(n)
    	n^{\frac{2}{\gamma}-1}}\right) - g\left(S_{2/\gamma}\right)}\right|
    \le \left|\Exp{g\left(\frac{\sum_{i = 1}^n D_i^2}{l(n)n^{\frac{2}{\gamma}}}
    	\right) - g\left(S_{2/\gamma}\right)}\right| \\
    &\hspace{10pt}+ \left|\Exp{g\left(\frac{\nu_1 \sum_{\ell > 0}f_n^\ast(\ell)\ell}{l_0(n)
    	n^{\frac{2}{\gamma}-1}}\right) - g\left(\frac{\sum_{i = 1}^n D_i^2}
    	{l_0(n)n^{\frac{2}{\gamma}}}\right)}\right|\\
    &\hspace{10pt}+ \left|\Exp{g\left(\frac{\nu_1 \Phi_n(k)}{l_0(n)
    	n^{\frac{2}{\gamma}-1}}\right)
        - g\left(\frac{\nu_1 \sum_{\ell > 0}f_n^\ast(\ell)\ell}{l_0(n)
    	n^{\frac{2}{\gamma}-1}}\right)}\right|:= \Xi_n^{(1)} + \Xi_n^{(2)} + \Xi_n^{(3)},
\end{align*}
and we will show that all three terms converge to zero. We remark that by Potter's bounds we have $\lim_{n \to \infty} l_0(n) n^{-\delta} = 0$,
for any $\delta > 0$.

It follows from \eqref{eq:annd_clt_stable_law} that
\[
	\lim_{n \to \infty} \sup_{k \le n^{\tau}} \Xi_n^{(1)} = 0.
\]

We proceed with $\Xi_n^{(2)}$. Note that on the event $\Omega_n$ we have $|L_n - \nu_1 n| \le n^{1 - \varepsilon}$. In addition,
\[
	\sum_{\ell > 0} f_n^\ast(\ell) \ell = \frac{1}{L_n}\sum_{i = 1}^n D_i^2.
\]
Next, since $g$ is bounded and Lipschitz continuous, there exists $C_0 > 0$ such that
\begin{align*}
	\Xi_n^{(2)} &\le \frac{C_0}{l_0(n) n^{\frac{2}{\gamma}}}
    	\Exp{\sum_{i = 1}^n D_i^2\left|\frac{\nu_1 n}{L_n} - 1\right|\ind{\Lambda_{nk}}}
        + 2C_0 \Prob{\Lambda_{nk}^c} \\
    &\le \frac{C_0 n^{\frac{\varepsilon}{2}}}{l_0(n)}
    	\Exp{\frac{|\nu_1 n - L_n|}{L_n}\ind{\Lambda_{nk}}}
        + 2C_0 \Prob{\Lambda_{nk}^c} \le \frac{C_1 n^{\frac{-\varepsilon}{2}}}{l_0(n)}
        + 2C_0 \Prob{\Lambda_{nk}^c}
\end{align*}
for some constant $C_1>0$. The second tern in the last expression converges to zero due to \eqref{eq:annd_clt_convergence_main_event}. It follows that
\[
	\lim_{n \to \infty} \sup_{k \le n^{\tau}} \Xi_n^{(2)} = 0.
\]

Finally, we turn to $\Xi_n^{(3)}$. Using again that $g$ is bounded and Lipschitz continuous, we get
\begin{align*}
	\Xi_n^{(3)}
    &\le \frac{\nu_1 C_0}{l_0(n) n^{\frac{2}{\gamma}-1}} \Exp{\left|\Phi_n(k)
    	- \sum_{\ell > 0}f_n^\ast(\ell)\ell\right|\ind{\Lambda_{n}}}
        + 2C_0 \Prob{\Lambda_{n}^c}.
\end{align*}
Again, the last term converges to zero due to \eqref{eq:annd_clt_convergence_main_event}. For the other term,
similarly to the proof of Theorem \ref{thm:convergence_annd_cm_strong}, we define
\[
	X_{ij}(k,\ell) = \ind{D_i = k, D_j = \ell}\left(\frac{G_{ij}}{L_n}
    - \frac{D_i D_j}{L_n^2}\right)
\]
and use that on $A_{nk}$ we have $f^\ast_n(k) > 0$, to write
\begin{align*}
	\abs{\Phi_n(k) - \sum_{\ell > 0} f_n^\ast(\ell) \ell}
    &= \frac{\abs{\sum_{\ell > 0} h_n(k,\ell)\ell - f_n^\ast(k) f_n^\ast(\ell)\ell}}{f^\ast_n(k)}\le \frac{\sum_{\ell > 0} \ell \abs{\sum_{i,j = 1}^n X_{ij}(k,\ell)}}{f_n^\ast(k)}.
\end{align*}
Taking the expectation, conditioned on the degree sequence, and using Lemma \ref{lem:variance_Xij_kl} we then obtain
\begin{align*}
	\Expn{\abs{\Phi_n(k) - \sum_{\ell > 0} f_n^\ast(\ell) \ell}}
    &\le \frac{\sum_{\ell > 0} \ell }{f_n^\ast(k)}
    	\Expn{ \abs{\sum_{i,j = 1}^n X_{ij}(k,\ell)}} \\
    &\le \frac{\sum_{\ell > 0} \ell}{f_n^\ast(k)}
    	\Expn{\left(\sum_{i,j = 1}^n X_{ij}(k,\ell)\right)^2}^{1/2}\\
    &\le \frac{C}{L_n^{1/2}} \sum_{\ell > 0} f_n^\ast(\ell) \ell = \frac{C}{L_n^{3/2}} \sum_{i = 1}^n D_i^2,
\end{align*}
for some constant $C > 0$. Using again that on the event $\Omega_n$ we have $|L_n - \nu_1 n| \le n^{1 - \varepsilon}$, we obtain
\begin{align*}
	\frac{\nu_1 C_0}{l_0(n) n^{\frac{2}{\gamma}-1}} \Exp{\left|\Phi_n(k)
		- \sum_{\ell > 0}f_n^\ast(\ell)\ell\right|\ind{\Lambda_{n}}}
	&\le \frac{\nu_1 C_0}{l_0(n) n^{\frac{2}{\gamma}-1}} \Exp{\frac{C}{L_n^{3/2}} \sum_{i = 1}^n D_i^2 \ind{\Lambda_{kn}}}\\
	&\le \frac{\nu_1 C_0 C n^{\frac{2}{\gamma}+\frac{\varepsilon}{2}-\frac{3}{2}-\frac{2}{\gamma}+1}}{\left(\nu_1 - n^{- \varepsilon}\right)^{3/2}l_0(n)}
	=\frac{\nu_1 C_0 C n^{-\frac{1 - \varepsilon}{2}}}{\left(\nu_1 - n^{- \varepsilon}\right)^{3/2} l_0(n)}.
\end{align*}
Since we chose $\varepsilon = (\gamma - 1)/4(\gamma + 3) < 1$, it follows that
\[
	\lim_{n \to \infty} \sup_{k \le n^{\tau}} \Xi_n^{(3)} = 0,
\]

which proves the last statement.
\end{proof}

\subsection{Erased configuration model}
\label{sec:proof_ecm}

Denote by $Y_i$ the number of erased stubs of vertex $i$ and let $E_n=\sum_{i=1}^nY_i=L_n-\widehat{L}_n$ be the total number of erased  stubs, which is twice the number of erased undirected edges. We start with two technical results on the relation between the empirical densities $f_n(k)$ and $\widehat{f}_n(k)$.

\begin{lemma}\label{lem:density_ecm_cm_error}
Let $\mathscr{D}$ be regularly varying with exponent $1 < \gamma < 2$ and $\{G_n\}_{n \ge 1}$
be generated by \emph{\texttt{ECM}} with ${\bf D}_n = \emph{\texttt{IID}}(\mathscr{D})$. Then, for any $K, \delta > 0$
\[
	\lim_{n \to \infty} \Prob{\sum_{k = 0}^\infty \left|\widehat{f}_n(k) - f_n(k)\right| > K n^{1 - \gamma + \delta}} = 0.
\]
When $\gamma > 2$, we have
\[
	\lim_{n \to \infty} \Prob{\sum_{k = 0}^\infty \left|\widehat{f}_n(k) - f_n(k)\right| > K n^{-1 + \delta}} = 0.
\]
\end{lemma}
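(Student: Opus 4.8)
The plan is to reduce the total-variation--type sum on the left to a count of the vertices whose degree is changed by the erasure, and then to bound that count. Writing $\widehat{D}_i = D_i - Y_i$ for the post-erasure degree and setting $N_n = \#\{i : Y_i > 0\}$ for the number of vertices that lose at least one stub, I would first note that a vertex with $Y_i = 0$ contributes identically to $f_n$ and $\widehat{f}_n$, while a vertex with $Y_i > 0$ moves from bin $D_i$ to bin $\widehat{D}_i$ and hence contributes at most $2$ to $\sum_k \abs{\widehat{f}_n(k)-f_n(k)}$. This yields the deterministic bound
\begin{equation*}
	\sum_{k=0}^\infty \abs{\widehat{f}_n(k)-f_n(k)} \le \frac{2}{n}N_n \le \frac{2}{n}E_n,
\end{equation*}
using $N_n \le \sum_i Y_i = E_n$. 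It therefore suffices to control $N_n$ (or $E_n$) and apply Markov's inequality.

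For $\gamma > 2$ the degrees have finite variance and $E_n$ itself is already small enough. Conditioning on the degree sequence, the expected number of erased stubs from self-loops is $\sum_i \Expn{G_{ii}} = \sum_i D_i(D_i-1)/(L_n-1) \le \sum_i D_i^2/(L_n-1)$, and the expected number from multiple edges is, via $(G_{ij}-1)^+ \le \binom{G_{ij}}{2}$, at most $\sum_{i\ne j}\Expn{\binom{G_{ij}}{2}} \le (\sum_i D_i^2)^2/((L_n-1)(L_n-3))$. Since $\sum_i D_i^2 = \bigOp{n}$ and $L_n = \bigOp{n}$ when $\nu_2 < \infty$, both are $\bigOp{1}$, so $\Exp{E_n} = \bigO{1}$. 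Markov's inequality then gives $\Prob{2E_n/n > Kn^{-1+\delta}} = \Prob{E_n > (K/2)n^\delta} = \bigO{n^{-\delta}} \to 0$, which is the second statement.

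For $1 < \gamma < 2$ the variance is infinite and $E_n$ is too large, since the erased stubs concentrate on a few high-degree vertices; so I would bound $N_n$ instead, splitting $N_n$ into the vertices carrying a self-loop and the vertices lying in some multiple edge. I would work on the high-probability event $\mathcal{G}_n$ on which $\sum_i D_i^2 \le n^{2/\gamma+\epsilon}$ (available from the stable-law estimate, cf. the event $B_n$ in the proof of Theorem~\ref{thm:annd_clt_configuration_model}), $L_n \ge \nu_1 n/2$, and $\sum_{i : D_i \le T} D_i^2 \le C\, n T^{2-\gamma}$, the last of these concentrating because its summands are bounded by $T^2$. The self-loop vertices are at most the number of self-loops, whose conditional mean is $\lesssim \sum_i D_i^2/L_n \lesssim n^{2/\gamma - 1 + \epsilon} \le n^{2-\gamma+\epsilon}$. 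For the multiple-edge vertices I would fix $T = n^{1-1/\gamma}$ and separate vertices of degree $> T$, of which there are $\bigOp{nT^{-\gamma}} = \bigOp{n^{2-\gamma}}$, from those of degree $\le T$; for the latter the expected number lying in a multiple edge is $\le \sum_{i : D_i \le T}\sum_{j}\Expn{\binom{G_{ij}}{2}} \lesssim (\sum_{D_i \le T} D_i^2)(\sum_j D_j^2)/L_n^2 \lesssim T^{2-\gamma}n^{2/\gamma-1+\epsilon}$, which with $T = n^{1-1/\gamma}$ simplifies to $n^{2-\gamma+\epsilon}$. Hence $\Exp{N_n \ind{\mathcal{G}_n}} \lesssim n^{2-\gamma+\epsilon}$, and Markov's inequality together with $\Prob{\mathcal{G}_n^c} \to 0$ gives $\Prob{2N_n/n > Kn^{1-\gamma+\delta}} \to 0$ as soon as $\epsilon < \delta$, which is the first statement.

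The main obstacle is the multiple-edge vertex count in the infinite-variance regime: one must balance the few high-degree vertices against the multiple-edge probability of the many low-degree ones, and the choice $T = n^{1-1/\gamma}$ succeeds only because the truncated second moment $\sum_{D_i \le T} D_i^2 \sim nT^{2-\gamma}$ is genuinely smaller than the full sum $\sum_i D_i^2 \sim n^{2/\gamma}$. Making this rigorous requires controlling simultaneously the heavy-tailed full second moment (through the stable law) and the concentration of the truncated second moment on $\mathcal{G}_n$, which is the technical heart of the argument; the reduction in the first paragraph and the finite-variance case are otherwise routine.
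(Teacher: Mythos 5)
Your proposal is correct in substance, but it takes a genuinely different route from the paper. The paper's entire proof consists of your first paragraph---the reduction $\sum_{k}|\widehat{f}_n(k)-f_n(k)|\le \frac{2}{n}\#\{i:Y_i>0\}\le \frac{2E_n}{n}$---followed by a citation of an external result (\cite[Theorem 8.13]{Hoorn2016b}) which asserts precisely that $E_n=\bigOp{n^{2-\gamma+\delta}}$ for $1<\gamma<2$ and $E_n=\bigOp{n^{\delta}}$ for $\gamma>2$; no self-loop/multi-edge analysis appears in the paper at all. You instead prove the required scaling from scratch by bounding $N_n=\#\{i:Y_i>0\}$, splitting into self-loop vertices, vertices of degree $>T$, and low-degree vertices lying in multiple edges, with the truncation $T=n^{1-1/\gamma}$ chosen to balance the tail count $nT^{-\gamma}$ against the truncated-second-moment bound $T^{2-\gamma}n^{2/\gamma-1}$; both come out at $n^{2-\gamma}$, which is exactly the right exponent. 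Your argument is therefore self-contained where the paper's is not, at the cost of length and of the routine concentration steps (truncated second moment, tail counts) that you only sketch.

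Two corrections are needed, neither fatal to your route. First, in the finite-variance case the claim ``$\Exp{E_n}=O(1)$'' is false when $2<\gamma\le 4$: the unconditional expectation of your multi-edge bound $\left(\sum_i D_i^2\right)^2/((L_n-1)(L_n-3))$ involves $\Exp{\left(\sum_i D_i^2\right)^2}$, which requires a finite fourth moment. The conclusion survives, but Markov's inequality must be applied conditionally on the degree sequence, restricted to the whp event $\{\sum_i D_i^2\le 2\nu_2 n\}$, on which $\Expn{E_n}=O(1)$---exactly the device you use in the infinite-variance case. Second, your motivation that ``$E_n$ is too large'' when $1<\gamma<2$ is not true: $E_n$ itself is $\bigOp{n^{2-\gamma+\delta}}$ (this is what the paper's cited theorem states). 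What is too large is your particular bound on $E_n$ via $(G_{ij}-1)^+\le\binom{G_{ij}}{2}$, which yields $n^{4/\gamma-2}\gg n^{2-\gamma}$; a direct bound on $E_n$ can be rescued by using $\Expn{(G_{ij}-1)^+}\le\min\left(D_iD_j/L_n,\,(D_iD_j/L_n)^2\right)\le (D_iD_j/L_n)^s$ for any $1\le s<\gamma$, which gives $\Expn{E_n}\lesssim n^{2-s}$. Since your argument only ever needs $N_n\le E_n$, these points do not affect its validity.
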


\begin{proof} We write
\begin{align*}
	\sum_{k = 0}^\infty \left|\widehat{f}_n(k) - f_n(k)\right| &\le \sum_{k = 0}^\infty \frac{1}{n} \sum_{i = 1}^n
		\left|\ind{\widehat{D}_i=k} - \ind{D_i = k}\right| \\
	&\le \sum_{k = 0}^\infty \frac{1}{n} \sum_{i = 1}^n \ind{Y_i>0}\left(\ind{\widehat{D}_i = k} + \ind{D_i = k}\right)\\
	&=\frac{1}{n} \sum_{i = 1}^n \ind{Y_i>0} \sum_{k = 0}^\infty \left(\ind{\widehat{D}_i = k} + \ind{D_i = k}\right) = \frac{2}{n} \sum_{i = 1}^n \ind{Y_i>0} \le \frac{2 E_n}{n}.
\end{align*}
Therefore, using \cite[Theorem 8.13]{Hoorn2016b}, we get for $1 < \gamma < 2$,
\begin{align*}
	\lim_{n \to \infty} \Prob{\sum_{k = 0}^\infty \left|\widehat{f}_n(k) - f_n(k)\right| > K n^{1 - \gamma + \delta}}
	&\le \lim_{n \to \infty} \Prob{ {E_n} > \frac{K n^{2 - \gamma + \delta}}{2}} = 0,
\end{align*}
while for $\gamma > 2$ we have
\begin{align*}
	\lim_{n \to \infty} \Prob{\sum_{k = 0}^\infty \left|\widehat{f}_n(k) - f_n(k)\right| > K n^{-1 + \delta}}
	&\le \lim_{n \to \infty} \Prob{ { E_n} > \frac{K n^{\delta}}{2}} = 0,
\end{align*}
\end{proof}

In the above proof we bounded the number of nodes which had a stub removed, $\sum_{i = 1}^n \ind{Y_i > 0}$, with  the
total number of erased stubs $E_n$. Although this is not a tight bound, it is sufficient for our results and we use this
bound several times in proofs regarding the ECM. The next lemma controls the event on which the empirical degree and
size-biased degree distribution are non-zero, given that the limit distribution is non-zero.

\begin{lemma}\label{lem:non_zero_empirical_densities}
Let $\mathscr{D}$ be regularly varying with exponent $\gamma > 1$ and $\{G_n\}_{n \ge 1}$
be generated by \emph{\texttt{ECM}} with ${\bf D}_n = \emph{\texttt{IID}}(\mathscr{D})$. Then, if we denote by
$f(k)$ the probability density function of $\mathscr{D}$, and let $k$ be such that $f(k) > 0$,
\[
	\lim_{n \to \infty} \Prob{\widehat{f}_n(k) > \frac{f(k)}{2}, f_n(k) > \frac{f(k)}{2}} = 1,
\]
and in particular
\[
	\lim_{n \to \infty} \Prob{\widehat{f}_n(k) > 0, f_n(k) > 0} = 1.
\]
\end{lemma}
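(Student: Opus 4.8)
The plan is to show separately that each of the two empirical densities, $f_n(k)$ and $\widehat{f}_n(k)$, exceeds $f(k)/2$ with probability tending to one, and then to combine the two events by a union bound on their complements.

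First I would dispose of the pre-erasure density $f_n(k)$, which is the easy half. Since ${\bf D}_n = \texttt{IID}(\mathscr{D})$, the summands $\ind{D_i = k}$ are i.i.d. Bernoulli with mean $f(k)$, so by the strong law of large numbers $f_n(k) \plim f(k)$. Because $f(k) > 0$ by hypothesis, this immediately gives $\Prob{f_n(k) > f(k)/2} \To 1$.

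Next I would transfer this control to the erased density $\widehat{f}_n(k)$ via Lemma~\ref{lem:density_ecm_cm_error}. The key observation is the pointwise bound $\abs{\widehat{f}_n(k) - f_n(k)} \le \sum_{j=0}^\infty \abs{\widehat{f}_n(j) - f_n(j)}$. Lemma~\ref{lem:density_ecm_cm_error} (whose proof bounds the right-hand side by $2E_n/n$, twice the fraction of nodes losing a stub) shows that this total sum converges to zero in probability for every $\gamma > 1$: the upper bounds $n^{1-\gamma+\delta}$ for $1 < \gamma < 2$ and $n^{-1+\delta}$ for $\gamma > 2$ both vanish once $\delta$ is chosen small enough. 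Combined with the previous step this yields $\widehat{f}_n(k) \plim f(k)$, hence $\Prob{\widehat{f}_n(k) > f(k)/2} \To 1$ as well.

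Finally, writing $A_n = \{f_n(k) > f(k)/2\}$ and $\widehat{A}_n = \{\widehat{f}_n(k) > f(k)/2\}$, the union bound $\Prob{A_n^c \cup \widehat{A}_n^c} \le \Prob{A_n^c} + \Prob{\widehat{A}_n^c} \To 0$ gives $\Prob{A_n \cap \widehat{A}_n} \To 1$, which is exactly the first displayed claim; the second statement follows at once since $f(k)/2 > 0$. The only delicate point, rather than a genuine obstacle, is the control of $\widehat{f}_n(k)$, which is precisely what Lemma~\ref{lem:density_ecm_cm_error} supplies; once that total-variation estimate is invoked the remainder is routine. I would only take care that the estimate covers the whole range $\gamma > 1$ asserted here, including, if needed, the boundary case $\gamma = 2$, where $E_n/n \To 0$ still holds because the number of erased stubs is of strictly smaller order than $n$.
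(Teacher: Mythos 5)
Your proposal is correct and follows essentially the same route as the paper: both arguments reduce the claim to (i) convergence of $f_n(k)$ to $f(k)$ and (ii) the bound $|\widehat{f}_n(k) - f_n(k)| \le \sum_{j}|\widehat{f}_n(j) - f_n(j)| \to 0$ supplied by Lemma~\ref{lem:density_ecm_cm_error}, combined via a union bound. The only (inessential) difference is that you justify (i) by the law of large numbers for the i.i.d.\ indicators $\ind{D_i = k}$, whereas the paper cites Theorem~\ref{thm:convergence_distribution_first_moment}; and your explicit caveat about $\gamma = 2$ is a point the paper's own proof glosses over, since Lemma~\ref{lem:density_ecm_cm_error} is stated only for $\gamma \in (1,2)$ and $\gamma > 2$.
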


\begin{proof}
For the first result we show that
\[
	\lim_{n \to \infty} 1 - \Prob{\widehat{f}_n(k) > \frac{f(k)}{2}, f_n(k) > \frac{f(k)}{2}} = 0,
\]
which also implies the second result since $f(k) > 0$. By splitting the probability we get
\begin{align*}
	&\hspace{-30pt}1 - \Prob{\widehat{f}_n(k) > \frac{f(k)}{2}, f_n(k) > \frac{f(k)}{2}, \Lambda_n} \\
	&\le \Prob{\widehat{f}_n(k) \le \frac{f(k)}{2}}
		+ \Prob{f_n(k) \le \frac{f(k)}{2}}\\
	&\le \Prob{|\widehat{f}_n(k) - f(k)| \ge \frac{f(k)}{2}} + \Prob{\left|f_n(k) - f(k)\right| \ge \frac{f(k)}{2}} \\
	&\le \Prob{|\widehat{f}_n(k) - f_n(k)| \ge \frac{f(k)}{4}} + 2\Prob{\left|f_n(k) - f(k)\right| \ge \frac{f(k)}{4}}.
\end{align*}
Since the first probability converges to zero by Lemma \ref{lem:density_ecm_cm_error} while this holds the second term
by Theorem \ref{thm:convergence_distribution_first_moment}, the result follows.
\end{proof}

\begin{proof}[Proof of Theorem~\ref{thm:annd_erased_model_error_term}]
Recall that $a = (\gamma - 1)^2/(2\gamma)$ and note that $f^\ast(k) > 0$ implies that $f(k) > 0$ and $k \ge 1$. Let $\Omega_n$ be as in Assumption \ref{asmp:convergence_distribution_first_moment} with $\varepsilon = (\gamma - 1)/4(\gamma + 3)$ and define the events
\begin{align*}
	A_n &= \left\{\max_{1 \le i \le n} D_i \le n^{\frac{1}{\gamma} + \frac{a}{4}}\right\}, \\
	B_n &= \left\{\Expn{{ E_n}} \le n^{2 - \gamma + \frac{a}{4}}\right\}, \\
	C_n &= \left\{\widehat{f}_n(k) > \frac{f(k)}{2}, f_n(k) > \frac{f(k)}{2} \right\}.
\end{align*}
Then $\Prob{A_n^c} \to 0$, since $D_i$ are i.i.d. samples from $\mathscr{D}$, while
$\Prob{B_n^c} \to 0$ by Theorem 8.13 in~\cite{Hoorn2016b}. Finally, Lemma \ref{lem:non_zero_empirical_densities}
implies that $\Prob{C_n} \to 1$ so that
if we define $\Lambda_n = A_n \cap B_n \cap C_n \cap \Omega_n$, then
\[
	\Prob{\Lambda^c_n} \le \Prob{\Omega_n^c} + \Prob{A_n^c} + \Prob{B_n^c} + \Prob{C^c_n}
	\to 0,
\]
as $n \to \infty$. Hence it is sufficient to prove the result conditioned on the event $\Lambda_n$.

By definition, and since $\widehat{f}_n^\ast(k), f^\ast(k) > 0$ on the event $\Lambda_n$, we have that
\[
	\widehat{\Phi}_n(k) = \frac{\sum_{i,j = 1}^n \widehat{G}_{ij} \widehat{D}_j
	\ind{\widehat{D}_i = k}}{k \sum_{i = 1}^n \ind{\widehat{D}_i = k}},
\]
and we split $|\widehat{\Phi}_n(k) - \Phi_n(k)|$ in four terms as
follows
\begin{align}
	\left|\widehat{\Phi}_n(k) - \Phi_n(k)\right|
	&\le \left|\frac{1}{k \sum_{i = 1}^n \ind{\widehat{D}_i = k}}
		- \frac{1}{k \sum_{i = 1}^n \ind{D_i = k}}\right|
		\sum_{i,j = 1}^n \widehat{G}_{ij} \widehat{D}_j \ind{\widehat{D}_i = k}
		\label{eq:phi_bound_numerator}\\
	&\hspace{10pt}+ \frac{1}{k \sum_{i = 1}^n \ind{D_i = k}}\sum_{i,j = 1}^n \widehat{G}_{ij}
		\widehat{D}_j\left|\ind{\widehat{D}_i = k} - \ind{D_i = k}\right|
		\label{eq:phi_bound_indicator}\\
	&\hspace{10pt}+  \frac{1}{k \sum_{i = 1}^n \ind{D_i = k}}\sum_{i,j = 1}^n \widehat{G}_{ij}
		\ind{D_i = k}\left|\widehat{D}_j - D_j\right| \label{eq:phi_bound_degree}\\
	&\hspace{10pt}+  \frac{1}{k \sum_{i = 1}^n \ind{D_i = k}}\sum_{i,j = 1}^n D_j
	\ind{D_i = k}\left| \widehat{G}_{ij} - G_{ij}\right|. \label{eq:phi_bound_edges}
\end{align}
The remainder of the proof consists of bounding each of the terms \eqref{eq:phi_bound_numerator}--\eqref{eq:phi_bound_edges} by an expression of the form $c n^{\frac{1}{\gamma} + \frac{a}{4} - 1} E_n$. This will give the result because for any constant $c > 0$ we have

\begin{align*}
	\lim_{n \to \infty} \Prob{ c n^{\frac{1}{\gamma} + \frac{a}{4} - 1} E_n
		> n^{\frac{2}{\gamma} -1 - a}, \Lambda_n}
	&\le \lim_{n \to \infty} c n^{\frac{5a}{4} - \frac{1}{\gamma}} \CExp{\Expn{{ E_n}}}{\Lambda_n}\\
	&\le \lim_{n \to \infty} c n^{\frac{3a}{2} - \gamma + 2 - \frac{1}{\gamma}} = 0, \numberthis
		\label{eq:bounding_phi_scaling_erased_edges}
\end{align*}
where the convergence to zero holds because
\[
\frac{3a}{2} - \gamma + 2 - \frac{1}{\gamma} = \frac{3a}{2} - \frac{(\gamma - 1)^2}{\gamma}
= \frac{3a}{2} - 2a = -\frac{a}{2} < 0.
\]
\smallskip

In order to bound \eqref{eq:phi_bound_numerator}, we first use that $|\widehat{f}_n(k) - f_n(k)| \le E_n/n$ to obtain
\[
\left|\frac{1}{k \sum_{i = 1}^n \ind{\widehat{D}_i = k}} - \frac{1}{k\sum_{i = 1}^n \ind{D_i = k}}\right|
= \frac{\left| f_n(k) - \widehat{f}_n(k) \right|}{n k \widehat{f}_n(k) f_n(k)}
\le \frac{2E_n}{n^2 \widehat{f}_n^\ast(k) f_n^\ast(k)}.
\]
Then, on $\Lambda_n$ and using that $\widehat{D}_i \le D_i$, we get
\begin{align*}
	&\hspace{-30pt}\left|\frac{1}{k \sum_{i = 1}^n \ind{\widehat{D}_i = k}} - \frac{1}{k\sum_{i = 1}^n \ind{D_i = k}}\right|
		\sum_{i,j = 1}^n \widehat{G}_{ij} \widehat{D}_j \ind{\widehat{D}_i = k}\\
	&\le \frac{2E_n n^{\frac{1}{\gamma} + \frac{a}{4}}}{n^2 k \widehat{f}_n(k) f_n(k)} \sum_{i,j = 1}^n \widehat{G}_{ij}
		\ind{\widehat{D}_i = k}\\
	&= \frac{2E_n n^{\frac{1}{\gamma} + \frac{a}{4}}}{n^2 k \widehat{f}_n(k) f_n(k)} \sum_{i = 1}^n \widehat{D}_i
		\ind{\widehat{D}_i = k} \\
	&= \frac{2E_n n^{\frac{1}{\gamma} + \frac{a}{4} - 1}}{f_n(k)} \le \frac{4 }{f(k)}\, n^{\frac{1}{\gamma} + \frac{a}{4} - 1} E_n.
\end{align*}
For \eqref{eq:phi_bound_indicator}, on the event $\Lambda_n$, using that $\widehat{G}_{ij} \le G_{ij} \wedge 1$, we get
\begin{align*}
	\frac{1}{k \sum_{i = 1}^n \ind{D_i = k}}&\sum_{i, j = 1}^n \widehat{G}_{ij} D_j\left|\ind{\widehat{D}_i = k} -
		\ind{D_i = k}\right| \\
	&\le \frac{1}{n k f_n(k)}\,n^{\frac{1}{\gamma} + \frac{a}{4}} \sum_{i, j = 1}^n \widehat{G}_{ij} \left(
		\ind{\widehat{D}_i = k} + \ind{D_i = k}\right) \ind{Y_i > 0} \\
	&\le \frac{1}{n k f_n(k)}\,2 k n^{\frac{1}{\gamma} + \frac{a}{4}} \sum_{i = 1}^n \ind{Y_i > 0}\le \frac{4}{f(k)} n^{\frac{1}{\gamma} + \frac{a}{4}-1} E_n.
\end{align*}
In order to bound \eqref{eq:phi_bound_degree}, note that $|\widehat{D}_i - D_i| = Y_i$. Hence, since $k \ge 1$, we have
\begin{align*}
	\frac{1}{k \sum_{i = 1}^n \ind{D_i = k}}\sum_{i,j = 1}^n \widehat{G}_{ij}
		\ind{D_i = k}\left|\widehat{D}_j - D_j\right|
	&= \frac{1}{n k f_n(k)} \sum_{i,j = 1}^n \widehat{G}_{ij} Y_j \\
	&\le \frac{1}{n k f_n(k)} \sum_{j = 1}^n D_j Y_j \le \frac{2}{f(k)} n^{\frac{1}{\gamma} + \frac{a}{4} - 1} E_n,
\end{align*}

Finally, for \eqref{eq:phi_bound_edges}, on the event $\Lambda_n$, we obtain
\begin{align*}
	\frac{1}{k \sum_{i = 1}^n \ind{D_i = k}}\sum_{i,j = 1}^n D_j
		\ind{D_i = k}\left| \widehat{G}_{ij} - G_{ij}\right|
	&\le \frac{1}{n k f_n(k)} n^{\frac{1}{\gamma} + \frac{a}{4}} \sum_{i = 1}^n Y_i
	\le \frac{2}{f(k)} n^{\frac{1}{\gamma} + \frac{a}{4} - 1} E_n.
\end{align*}
It follows that the sum of the terms \eqref{eq:phi_bound_numerator}--\eqref{eq:phi_bound_edges} is bounded from above by $c n^{\frac{1}{\gamma} + \frac{a}{4} - 1} E_n$, where
\[c=\frac{4}{f(k)}+\frac{4}{f(k)}+\frac{2}{f(k)}+\frac{2}{f(k)}=\frac{12}{f(k)}.\]
The result now follows from \eqref{eq:bounding_phi_scaling_erased_edges}.
\end{proof}

\subsection{Average nearest neighbor rank}\label{ssec:proofs_annr}

\begin{proof}[Proof of Theorem \ref{thm:annr_convergence_general}]
By Assumption \ref{asmp:regularity_structure} there exists $\kappa > 0$ such
that if
\[
	\Gamma_n = \left\{\sum_{k,\ell > 0} \abs{h_n(k,\ell) - h(k,\ell)}
    \ind{f_n(k) > 0} \le n^{-\kappa}\right\},
\]
then $\Prob{\Gamma_n} \to 1$. Now let $\Omega_n$ denote the event from Assumption \ref{asmp:convergence_distribution_first_moment}.
Then, if we define $\Lambda_n = \Gamma_n \cap \Omega_n$, it follows 
that $\Prob{\Lambda_n} \to 1$. Therefore, we only need to prove the result, conditioned on $\Lambda_n$.

By definition of $\Theta_n(k)$ and $\Theta(k)$ we have
\begin{align*}
  \abs{\Theta_n(k)-\Theta(k)}&1_{\{f_n(k)>0\}}
  \leq \sum_{\ell > 0} h_n(k,\ell)F_n^\ast(\ell)
  	\abs{\frac{1}{f_n^\ast(k)}-\frac{1}{f^\ast(k)}}\ind{f_n(k)>0} \\
  &\hspace{10pt}+ \frac{1}{f^\ast(k)}\abs{\sum_{\ell > 0} h_n(k,\ell)F_n^\ast(\ell)
  	- h(k,\ell)F^\ast(\ell)}\ind{f_n(k)>0} \\
  &\le \frac{\ind{f_n(k)>0}}{f_n^\ast(k) f^\ast(k)}\sum_{\ell > 0} h_n(k,\ell)F_n^\ast(\ell)
  	\abs{f_n^*(k)-f^\ast(k)} \\
  &+  \frac{1}{f^\ast(k)} \sum_{\ell > 0}
  	\abs{h_n(k,\ell)-h(k,\ell)}F_n^\ast(\ell)\ind{f_n(k)>0} + \frac{1}{f^\ast(k)}\sum_{\ell > 0} h(k,\ell)
  \abs{F_n^\ast(\ell)-F^\ast(\ell)}  \\
  &:= \Xi_n^{(1)} + \Xi_n^{(2)} + \Xi_n^{(3)}.
\end{align*}
We will show that
\[
	\lim_{n \to \infty}	 \Prob{\Xi_n^{(i)} > \frac{n^{-\delta}}{3}, \Lambda_n} = 0 \quad \text{for} \quad i = 1,2,3.
\]
The result then follows because
\begin{align*}
	&\hspace{-30pt}\Prob{\abs{\Theta_n(k)-\Theta(k)}1_{\{f_n(k)>0\}} > n^{-\delta}, \Lambda_n}\\
    &\le \Prob{\Xi_n^{(1)} > \frac{n^{-\delta}}{3}, \Lambda_n}
    + \Prob{\Xi_n^{(2)} > \frac{n^{-\delta}}{3}, \Lambda_n}
    + \Prob{\Xi_n^{(3)} > \frac{n^{-\delta}}{3}, \Lambda_n}.
\end{align*}
We start with $\Xi_n^{(1)}$. On the event $\Omega_n$ we have that $\abs{f_n^\ast(k) - f^\ast(k)} \le n^{-\varepsilon}$ and hence, since
\[
	\sum_{\ell > 0} h_n(k,\ell)F_n^\ast(\ell) \le 1,
\]
we have
\begin{align*}
	\Xi_n^{(1)} &\le \frac{\ind{f_n(k)>0}\abs{f_n^\ast(k) - f^\ast(k)}}
    	{f^\ast(k)(f^\ast(k) - n^{-\varepsilon})} \sum_{\ell > 0} h_n(k,\ell)F_n^\ast(\ell)
    \le \frac{d_{tv}(f_n^\ast, f^\ast)}
    	{f^\ast(k)(f^\ast(k) - n^{-\varepsilon})} \le \frac{n^{-\varepsilon}}{f^\ast(k)(f^\ast(k) - n^{-\varepsilon})}.
\end{align*}
Therefore, since $\delta < \varepsilon$ it follows that
\[
	\lim_{n \to \infty} \Prob{\Xi_n^{(1)} > \frac{n^{-\delta}}{3}, \Lambda_n} = 0.
\]
For $\Xi_n^{(2)}$ we note that
\[
	\Xi_n^{(2)} \le \frac{1}{f^\ast(k)} \sum_{k, \ell > 0} \abs{h_n(k,\ell) - h(k,\ell)}\ind{f_n(k)>0}
\]
and hence, since $\delta < \kappa$,
\[
	\lim_{n \to \infty} \Prob{\Xi_n^{(2)} > \frac{n^{-\delta}}{3}, \Lambda_n} = 0.
\]
Finally, for $\Xi_n^{(3)}$ we use \eqref{eq:sup_bound_total_variantion} to get
\[
	\Xi_n^{(3)} \le \frac{\sup_{\ell > 0} \abs{F_n^\ast(\ell) - F^\ast(\ell)}}{f^\ast(k)}
    \sum_{\ell > 0} h(k,\ell) \le \frac{2 d_{tv}(f_n^\ast, f^\ast)}{f^\ast(k)} \le \frac{2 n^{-\varepsilon}}{f^\ast(k)}
\]
which implies that
\[
	\lim_{n \to \infty} \Prob{\Xi_n^{(3)} > \frac{n^{-\delta}}{3}, \Lambda_n} = 0.
\]
\end{proof}

\begin{proof}[Proof of Theorem \ref{thm:annr_convergence_cm}]
Recall that
\[
	\Exp{F^\ast(\mathscr{D}^\ast)} = \sum_{\ell > 0} f^\ast(\ell) F^\ast(\ell).
\]
Then we have
\begin{align*}
	\abs{\Theta_n(k)-\Exp{F^\ast({\mathscr{D}}^\ast)}}&\ind{f_n(k)>0}
    = \frac{\ind{f_n(k) > 0}}{f_n^\ast(k)}\abs{\sum_{\ell > 0} h_n(k,\ell)F_n^\ast(\ell)
    	- f^\ast_n(k)f^\ast(\ell) F^\ast(\ell)} \\
    &\le \frac{\ind{f_n(k) > 0}}{f_n^\ast(k)} \abs{\sum_{\ell > 0} (h_n(k,\ell)
    	- f_n^\ast(k)f_n^\ast(\ell))F_n^\ast(\ell) } \\
    &\hspace{10pt}+ \abs{\sum_{\ell > 0} f_n^\ast(\ell)F_n^\ast(\ell)-f^\ast(\ell)F^\ast(\ell)} \\
   	&\le \frac{\ind{f_n(k) > 0}}{f_n^\ast(k)} \abs{\sum_{\ell > 0} (h_n(k,\ell)
    	- f_n^\ast(k)f_n^\ast(\ell))F_n^\ast(\ell) } \\
    &\hspace{10pt}+ \sum_{\ell > 0} \abs{f_n^\ast(\ell)-f^\ast(\ell)} + \sum_{\ell > 0} \abs{F_n^\ast(\ell)-F^\ast(\ell)}f^\ast(\ell) \\
   	&:= \Xi_n^{(1)} + \Xi_n^{(2)} + \Xi_n^{(3)}.
\end{align*}
First, note that $\Xi_n^{(2)}$, $\Xi_n^{(3)}$ are independent of $k$ and
\[
	\Xi_n^{(2)} = 2 d_{tv}(f_n^\ast, f^\ast) \quad \text{and} \quad
    \Xi_n^{(3)} \le \sup_{\ell > 0} \abs{F_n^\ast(\ell) - F^\ast(\ell)}
    \le d_{tv}(f_n^\ast, f^\ast).
\]
Now let $\Omega_n$ is be defined as in Assumption \ref{asmp:convergence_distribution_first_moment} with
$\varepsilon = \eta/(8 + 4\eta)$. Then, by Theorem \ref{thm:convergence_distribution_first_moment}
\[
	\Prob{\Omega_n^c} = \bigO{n^{-\varepsilon}} = \bigO{n^{-\delta}},
\]
with all constants independent of $k$. In particular, since $\delta < \eta/(8 + 4\eta)$, this implies that
\[
	\sup_{k > 0} \Prob{\Xi_n^{(2)} + \Xi_n^{(3)}
    > \frac{n^{-\delta}}{2}} \le 2\Prob{d_{tv}(f_n^\ast, f^\ast) > \frac{n^{-\delta}}{4}}
    = \bigO{n^{-\delta}}.
\]

For $\Xi_n^{(1)}$ we will show that
\[
	\sup_{k > 0} \Prob{\Xi_n^{(1)}
    > \frac{n^{-\delta}}{2}, \Omega_n} = O\left(n^{\delta - \frac{1}{2}} \right) = O\left(n^{-\delta}\right),
\]
since $\delta < \eta/(8 + 4\eta) < 1/4$. For this we will use the same approach as for the proof of Theorem \ref{thm:convergence_annd_cm_strong}. That is, we split the summation
into the regimes $\ell \le w_n$ and $\ell > w_n$, with $w_n = \floor{n^p}$, and invoke Lemma \ref{lem:variance_Xij_kl}.
Recall the definition of $X_{ij}(k,\ell)$,
\[
	X_{ij}(k,\ell) = \ind{D_i = k, \, D_j = \ell} \left(\frac{G_{ij}}{L_n}
    -\frac{D_i D_j}{L_n^2}\right).
\]
Then, since $f_n^\ast(\ell) \le 1$,
\begin{align*}
	\Xi_n^{(1)} &\le \frac{\ind{f_n(k) > 0}}{f_n^\ast(k)}
    	\abs{\sum_{\ell > 0} h_n(k,\ell) - f_n^\ast(k)f_n^\ast(\ell)} \\
    &\le \frac{\ind{f_n(k) > 0}}{f_n^\ast(k)} \sum_{\ell \le w_n}
    	\abs{\sum_{i,j = 1}^n X_{ij}(k,\ell)}
    	+ \frac{\ind{f_n(k) > 0}}{f_n^\ast(k)} \sum_{\ell > w_n}
    	\abs{\sum_{i,j = 1}^n X_{ij}(k,\ell)}.
\end{align*}
Taking the conditional expectation and using Lemma \ref{lem:variance_Xij_kl}, in a similar way as in the proof of Theorem \ref{thm:convergence_annd_cm_strong}, we obtain
\begin{align*}
	\Expn{\Xi_n^{(1)}}
    &\le C \sum_{\ell \le w_n} \frac{f_n^\ast(\ell)}{L_n^{1/2}}
    	+ 2\sum_{\ell > w_n} \sum_{i = 1}^n \ind{D_i = \ell}\frac{D_i}{L_n} \\
    &= \frac{C}{L_n^{3/2}} \sum_{i = 1}^n D_i \ind{D_i \le w_n}
    	+\frac{2}{L_n} \sum_{i = 1}^n D_i \ind{D_i > w_n}\\
    &\le \frac{C}{L_n^{1/2}} +\frac{2}{L_n} \sum_{i = 1}^n D_i \ind{D_i > w_n},
\end{align*}
for some constant $C > 0$, which is independent from $k$. On the event $\Omega_n$ we have that
$L_n > \nu_1 n - n^{1 - \varepsilon}$, while
\[
	\Exp{\mathscr{D} \ind{\mathscr{D} > w_n}} \le w_n^{-\eta} \Exp{\mathscr{D}^{1 + \eta}}
    = \bigO{n^{-\eta p}}.
\]
Therefore we get
\begin{align*}
	\Prob{\Xi_n^{(1)} > \frac{n^{-\delta}}{2}, \Omega_n}
    &\le 2n^{\delta} \CExp{\Xi_n^{(1)}}{\Omega_n} \le \frac{2n^{\delta}}{\Prob{\Omega_n}} \Exp{\Expn{\Xi_n^{(1)}}\ind{\Omega_n}} \\
    &\le \frac{2Cn^{\delta}}{(\nu_1 n - n^{-\varepsilon})^{1/2}\Prob{\Omega_n}}
    	+ \frac{4n^{\delta}}{(\nu_1 - n^{-\varepsilon})\Prob{\Omega_n}}
        \Exp{\mathscr{D} \ind{\mathscr{D} > w_n}} \\
    &= \bigO{n^{\delta - \frac{1}{2}} + n^{\delta - \eta p}},
\end{align*}
and hence, by taking $p = 2\delta/\eta$, we obtain that for any $0 < \delta < \eta/(8+4\eta)$,
\begin{align*}
	\sup_{k > 0} \Prob{\abs{\Theta_n(k) - \Exp{F^\ast(\mathscr{D}^\ast)}} > n^{-\delta}}
    = \bigO{n^{-\delta/2} + n^{\delta - \frac{1}{2}} + n^{-\delta}} = \bigO{n^{\delta - \frac{1}{2}}}.
\end{align*}
\end{proof}

\begin{proof}[Proof of Theorem \ref{thm:annr_erased_model_error_term}]
Let $\Omega_n$ be defined as in Assumption \ref{asmp:convergence_distribution_first_moment} for some $\varepsilon > 0$ and define the events
\begin{align*}
	A_n = \left\{\Expn{E_n} \le n^{\frac{3 - \gamma}{2}}\right\}, \quad
	B_n = \left\{\widehat{f}_n(k) > \frac{f(k)}{2}, f_n(k) > \frac{f(k)}{2}\right\}.
\end{align*}
Then, since $(3-\gamma)/2 > 2 - \gamma$, $\Prob{A_n} \to 1$ by \cite[Theorem 8.13]{Hoorn2016b}, while $\Prob{B_n} \to 1$
by Lemma \ref{lem:non_zero_empirical_densities}. Hence, if we define $\Lambda_n = A_n \cap B_n \cap \Omega_n$,
then $\Prob{\Lambda_n} \to 1$,
as $n \to \infty$ so that it is enough to prove the result conditioned on the event $\Lambda_n$.

By definition we have that
\[
	\widehat{\Theta}_n(k) = \frac{\sum_{i,j = 1}^n \widehat{G}_{ij} \widehat{F}^*_n(\widehat{D}_j)
	\ind{\widehat{D}_i = k}}{k\sum_{i = 1}^n \ind{\widehat{D}_i = k}} \ind{\widehat{f}_n(k) > 0},
\]
where the hats denote expressions for the erased model. As in the proof of Theorem
\ref{thm:annd_erased_model_error_term}, we split $|\widehat{\Theta}_n(k) - \Theta_n(k)|$ in four terms
\begin{align}
	\left|\widehat{\Theta}_n(k) - \Theta_n(k)\right|
	&\le \left|\frac{\ind{\widehat{f}_n(k) > 0}}{k\sum_{i = 1}^n \ind{\widehat{D}_i = k}}
		- \frac{\ind{f_n(k) > 0}}{k\sum_{i = 1}^n \ind{D_i = k}}\right|
		\sum_{i,j = 1}^n \widehat{G}_{ij} \widehat{F}^*_n(\widehat{D}_j) \ind{\widehat{D}_i = k}
		\label{eq:theta_bound_numerator}\\
	&\hspace{10pt}+ \frac{\ind{f_n(k) > 0}}{k\sum_{i = 1}^n \ind{D_i = k}}\sum_{i,j = 1}^n \widehat{G}_{ij}
		\widehat{F}^*_n(\widehat{D}_j)\left|\ind{\widehat{D}_i = k} - \ind{D_i = k}\right|
		\label{eq:theta_bound_indicator}\\
	&\hspace{10pt}+  \frac{\ind{f_n(k) > 0}}{k\sum_{i = 1}^n \ind{D_i = k}}\sum_{i,j = 1}^n F^*_n(D_j)
		\ind{D_i = k}\left| \widehat{G}_{ij} - G_{ij}\right| \label{eq:theta_bound_edges}\\
	&\hspace{10pt}+  \frac{\ind{f_n(k) > 0}}{k\sum_{i = 1}^n \ind{D_i = k}}\sum_{i,j = 1}^n \widehat{G}_{ij}
		\ind{D_i = k}\left|\widehat{F}^*_n(\widehat{D}_j) - F^*_n(D_j)\right| \label{eq:theta_bound_degree}
\end{align}
and bound all four terms individually.

First, we bound the terms \eqref{eq:theta_bound_numerator}--\eqref{eq:theta_bound_edges} by the expressions of the form $\frac{C E_n}{f(k)n}$ for some $C>0$ and use that, for any $c > 0$ and $f(k) > 0$,
\begin{align*}
	\lim_{n \to \infty} \Prob{\frac{E_n}{n} > c f(k), \Lambda_n}
	\le \lim_{n \to \infty} \frac{1}{c f(k) n} \CExp{\Expn{E_n}}{A_n }
	= \lim_{n \to \infty} \frac{n^{-\frac{\gamma - 1}{2}}}{c f(k)} = 0. \numberthis
		\label{eq:bounding_theta_scaling_erased_edges}
	\end{align*}

For \eqref{eq:theta_bound_numerator}, on $\Lambda_n$ we have $\widehat{f}_n(k), f_n(k) > 0$ while $\left| f_n(k)-\widehat{f}_n(k) \right|\leq E_n/n$.
Therefore we obtain
\[
\left|\frac{1}{k\sum_{i = 1}^n \ind{\widehat{D}_i = k}} - \frac{1}{k\sum_{i = 1}^n \ind{D_i = k}}
\right| = \frac{\left| f_n(k) - \widehat{f}_n(k) \right|}{n k \widehat{f}_n(k) f_n(k)}
\le \frac{E_n}{n^2 k \widehat{f}_n(k) f_n(k)}.
\]
From this, and $\widehat{F}^\ast_n \le 1$, it follows that
\begin{align*}
	&\hspace{-60pt}\left|\frac{1}{k\sum_{i = 1}^n \ind{\widehat{D}_i = k}} - \frac{1}{k\sum_{i = 1}^n
		\ind{D_i = k}}\right|\sum_{i,j = 1}^n \widehat{G}_{ij} \widehat{F}^\ast_n(\widehat{D}_j)
		\ind{\widehat{D}_i = k} \\
	&\le \frac{E_n}{n^2 k \widehat{f}_n(k) f_n(k)} \sum_{i,j = 1}^n \widehat{G}_{ij}
		\ind{\widehat{D}_i = k} \\
	&\le \frac{E_n}{n^2 k \widehat{f}_n(k) f_n(k)} \sum_{i = 1}^n \widehat{D}_i
		\ind{\widehat{D}_i = k}\\
	&\le \frac{E_n}{nf_n(k)} \le \frac{2E_n}{n f(k)}.
\end{align*}

Continuing with \eqref{eq:theta_bound_indicator}, on the event $\Lambda_n$, using that $\widehat{G}_{ij} \le G_{ij} \wedge 1$, we obtain
\begin{align*}
	\frac{1}{k \sum_{i = 1}^n \ind{D_i = k}}\,&\sum_{i, j = 1}^n \widehat{G}_{ij} F^*_n(D_j)\left|\ind{\widehat{D}_i = k} -
		\ind{D_i = k}\right|\\
	&\le \frac{1}{k n  f_n(k)} \sum_{i, j = 1}^n \widehat{G}_{ij} \left(
		\ind{\widehat{D}_i = k} + \ind{D_i = k}\right) \ind{Y_i > 0} \\
	&\le  \frac{2 k}{k n f_n(k)} \sum_{i = 1}^n \ind{Y_i > 0} \le \frac{4  E_n}{n f(k)}.
\end{align*}

For \eqref{eq:theta_bound_edges}, we get, on the event $\Lambda_n$,
\begin{align*}
	\frac{1}{k \sum_{i = 1}^n \ind{D_i = k}}\sum_{i,j = 1}^n F^*_n(D_j)
		\ind{D_i = k}\left| \widehat{G}_{ij} - G_{ij}\right|
	&\le \frac{1}{n f_n(k)}  \sum_{i = 1}^n Y_i
	= \frac{E_n}{n f_n(k)} \le \frac{2 E_n}{n f(k)}.
\end{align*}

Now, using \eqref{eq:bounding_theta_scaling_erased_edges}, we obtain that
\[
	\lim_{n \to \infty} \Prob{\mbox{\eqref{eq:theta_bound_numerator}}+\mbox{\eqref{eq:theta_bound_indicator}}+\mbox{\eqref{eq:theta_bound_edges}} >
		\frac{\delta}{2},\, \Lambda_n}\le \lim_{n \to \infty} \Prob{\frac{  E_n}{n} > \frac{\delta}{16}\, f(k),\,\Lambda_n}= 0.
\]

It remains to bound \eqref{eq:theta_bound_degree}. This requires a bit more work. First we split $\left|\widehat{F}^\ast_n(\widehat{D}_j) - F^\ast_n(D_j)\right|$ as
\begin{align*}
	\left|\widehat{F}^\ast_n(\widehat{D}_j) - F^\ast_n(D_j)\right| &\le \left|\widehat{F}^\ast_n(\widehat{D}_j) - F^\ast_n(\widehat{D}_j)\right|
	+ \left|F^\ast_n(\widehat{D}_j) - F^\ast_n(D_j)\right|
\end{align*}
For the last term we have
\begin{align*}
	\left|F^\ast_n(\widehat{D}_j) - F^\ast_n(D_j)\right|
	&\le \frac{1}{L_n} \sum_{i = 1}^n D_i \left|\ind{D_i \le \widehat{D}_j} - \ind{D_i \le D_j}\right| \\
	&\le  \frac{2}{L_n} \sum_{i = 1}^n D_i \ind{D_i \le D_j} \ind{Y_j > 0}= 2 F_n^\ast(D_j) \ind{Y_j > 0}.
\end{align*}
For the other term we have
\begin{align*}
	\left|\widehat{F}^\ast_n(\widehat{D}_j) - F^\ast_n(\widehat{D}_j)\right|
	&\le \left|\frac{1}{\widehat{L}_n} - \frac{1}{L_n}\right|\sum_{i = 1}^n \widehat{D}_i \ind{\widehat{D}_i \le \widehat{D}_j}
		+ \frac{1}{L_n} \sum_{i = 1}^n  \left|\widehat{D}_i \ind{\widehat{D}_i \le \widehat{D}_j} - D_i \ind{D_i \le \widehat{D}_j}\right|\\
	&\le \frac{E_n}{L_n} \widehat{F}_n^\ast(\widehat{D}_j)
		+ \frac{1}{L_n} \sum_{i = 1}^n  Y_i \ind{\widehat{D}_i \le \widehat{D}_j} + \frac{1}{L_n} \sum_{i = 1}^n D_i
		\left|\ind{\widehat{D}_i \le \widehat{D}_j}  - \ind{D_i \le \widehat{D}_j} \right|\\
	&\le \frac{2 E_n}{L_n} + \frac{2}{L_n} \sum_{i = 1}^n D_i \ind{Y_i > 0}.
\end{align*}
To summarize, we have
\[
	\left|\widehat{F}^\ast_n(\widehat{D}_j) - F^\ast_n(D_j)\right| \le  \frac{2 E_n}{L_n} + 2 F_n^\ast(D_j) \ind{Y_j > 0}
	+ \frac{2}{L_n} \sum_{i = 1}^n D_i \ind{Y_i > 0}.
\]
Next we note that
\[
	\frac{1}{n k f_n(k)}\sum_{i,j = 1}^n \widehat{G}_{ij} \ind{D_i=k}
	\le \frac{1}{n k f_n(k)}\sum_{i = 1}^n D_i \ind{D_i=k} = 1.
\]
Hence, on the event $\Lambda_n$,
\begin{align}
	\nonumber &\hspace{-30pt}\frac{1}{k\sum_{i = 1}^n \ind{D_i = k}}\sum_{i,j = 1}^n \widehat{G}_{ij}
		\ind{D_i = k}\left|\widehat{F}^*_n(\widehat{D}_j) - F^*_n(D_j)\right|\\
	\nonumber &\le \frac{2E_n}{L_n} + \frac{1}{L_n} \sum_{i = 1}^n D_i \ind{Y_i > 0}
		+ \frac{2}{n k f_n(k)}\sum_{i,j = 1}^n \widehat{G}_{ij} F_n^\ast(D_j) \ind{Y_j > 0} \ind{D_i=k}\\
	\nonumber &\le \frac{2E_n}{(\nu_1 n - n^{1 - \varepsilon})} + \frac{1}{(\nu_1 n - n^{1 - \varepsilon})} \sum_{i = 1}^n
		D_i \ind{Y_i > 0} + \frac{4}{nf(k)} \sum_{j = 1}^n D_j \ind{Y_j > 0} \\
		\label{eq:last_term_annr_ecm}
	&= \frac{2E_n}{(\nu_1 n - n^{1 - \varepsilon})} + \left(\frac{1}{(\nu_1 n - n^{1 - \varepsilon})}
		+ \frac{4}{n f(k)}\right)\sum_{j = 1}^n D_j \ind{Y_j > 0}.
\end{align}

Using computations similar to those leading up to \eqref{eq:bounding_theta_scaling_erased_edges} we get
\[
	\lim_{n \to \infty} \Prob{\frac{2E_n}{(\nu_1 n - n^{1 - \varepsilon})} > \frac{\delta}{4}, \Lambda_n} = 0.
\]
For the last term in (\ref{eq:last_term_annr_ecm}), we note that the terms in front of the summation are $O(n^{-1})$. Hence it suffices to prove that
for any $c > 0$
\begin{equation}\label{eq:annr_ecm_diffcult_term}
	\lim_{n \to \infty} \Prob{\frac{1}{n}\sum_{j = 1}^n D_j \ind{Y_j > 0} > c} = 0.
\end{equation}
For this we define $p = \frac{\gamma + 1}{2} < \gamma$ and $q = p/(p - 1)$. Then by first applying Markov's inequality
and then H\"{o}lder's inequality we get
\[
	\Prob{\frac{1}{n}\sum_{j = 1}^n D_j \ind{Y_j > 0} > c} \le \frac{1}{c} \Exp{\left|D_1 \ind{Y_1 > 0}\right|}
	\le \frac{\Exp{D_1^{p}}^{1/p}}{c} \Prob{Y_1 > 0}^{1/q}.
\]
To finish the argument we use \cite[Lemma 5.3]{chen2013}, which states that the probability that the number of erased
out-bound and in-bound stubs in the directed configuration model are positive, converges to zero. Their argument has a
straightforward extension to the undirected case so that $\lim_{n \to \infty} \Prob{Y_1 > 0} = 0$, which implies
\eqref{eq:annr_ecm_diffcult_term} and hence
\[
	\lim_{n \to \infty} \Prob{\eqref{eq:theta_bound_degree} > \frac{\delta}{2}, \Lambda_n} = 0,
\]
which finishes the proof.
\end{proof}

\bibliographystyle{plain}
\bibliography{references}

\end{document}